\newtheorem{theorem}{Theorem}[section]
\newtheorem{lemma}[theorem]{Lemma}
\newtheorem{prop}[theorem]{Proposition}
\newtheorem{corollary}[theorem]{Corollary}
\newtheorem{defi}[theorem]{Definition}
\newtheorem{rem}[theorem]{Remark}
\newcommand{\R}{\mathbb{R}}
\newcommand{\T}{\mathbb{T}}
\newcommand{\ep}{\varepsilon}
\newenvironment{proof}{\begin{trivlist} \item[] {\em Proof:}}{\hfill $\Box$
                       \end{trivlist}}
\newenvironment{proofthm}[1]{\begin{trivlist} \item[] {\em Proof of Theorem \ref{#1}:}}{\hfill $\Box$
                       \end{trivlist}}
\newenvironment{proofprop}[1]{\begin{trivlist} \item[] {\em Proof of Proposition \ref{#1}:}}{\hfill $\Box$
                       \end{trivlist}}
\newenvironment{prooflem}[1]{\begin{trivlist} \item[] {\em Proof of Lemma \ref{#1}:}}{\hfill $\Box$
                       \end{trivlist}}
\renewcommand*\l@section{\@dottedtocline{1}{0em}{1.5em}}
\renewcommand*\l@subsection{\@dottedtocline{2}{1.5em}{2.3em}}
\renewcommand*\l@subsubsection{\@dottedtocline{3}{3.8em}{3.7em}}
\numberwithin{equation}{section}
\begin{document}

\title{Quantitative estimates for uniformly-rotating vortex patches}
\author{Jaemin Park}

\date{}

\maketitle

\begin{abstract}
In this paper, we derive some quantitative estimates for uniformly-rotating vortex patches. We prove that if a non-radial simply-connected patch $D$ is uniformly-rotating with small angular velocity $0 < \Omega \ll 1$, then the outmost point of the patch must be far from the center of rotation, with distance at least of order $\Omega^{-1/2}$. For $m$-fold symmetric simply-connected rotating patches, we show that their angular velocity must be close to $\frac{1}{2}$ for $m\gg 1$ with the difference at most $O(1/m)$, and also obtain estimates on $L^{\infty}$ norm of the polar graph which parametrizes the boundary.
  \end{abstract}

\section{Introduction}
Let us consider the two-dimensional incompressible Euler equation in vorticity form:

\begin{align}\label{Euler}
\begin{cases}
\partial_t \omega + \vec{u}\cdot \nabla \omega = 0 & \text{ in }\R^2\times \R_+, \\
\vec{u}(\cdot, t) = - \nabla^{\perp} (-\Delta)^{-1}\omega (\cdot, t) & \text{ in }\R^2, \\
\omega(\cdot,0)=\omega_0 & \text{ in }\R^2,
\end{cases}
\end{align}
where $\nabla^{\perp}:=(-\partial_{x_2},\partial_{x_1})$. The velocity vector $\vec{u}$ can be recovered from the vorticity $\omega$ by the  Biot--Savart law, namely, 
\begin{align*}
\vec{u}(x,t) = \nabla^{\perp} \left( \omega * \mathcal{N}\right)(x,t) = \frac{1}{2\pi}\int_{\R^2} \omega(y,t)\frac{(x-y)^{\perp}}{|x-y|^2}dy,
\end{align*}
where $\mathcal{N}(x):=\frac{1}{2\pi}\log|x|$ is the Newtonian potential in two dimensions. A weak solution for \eqref{Euler} of the form $\omega(x,t) = 1_{D_t}(x)$ for some bounded domain $D_t$ is called a vortex patch. For the well-posedness results for vortex patches, we refer to \cite{bertozzi1993global, chemin1993persistance,elgindi2020singular,elgindi2019singular,yudovich1963non}.

 A vortex patch $1_{D_t}$ is said to be uniformly-rotating (about the origin) with a constant angular velocity $\Omega$ if the time-dependent domain $D_t$ is given by a rotation of the initial domain, that is,
  \begin{align*}
 D_t= \left\{ x \in \R^2 :  R_{\Omega t}x\in D_0\right\},
 \end{align*}
where $R_{\Omega t}$ denotes the rotation matrix, 
\begin{align*}
R_{\Omega t} := 
\begin{pmatrix}
\cos(\Omega t) & -\sin(\Omega t) \\
\sin(\Omega t) & \cos(\Omega t)
\end{pmatrix}.
\end{align*} 
 Clearly, any radial vortex patch (e.g. a disk or an annulus) is a uniformly-rotating solution with any angular velocity. The first non-radial example was discovered by Kirchhoff in \cite{kirchhoff1876vorlesungen}, namely, he showed that any elliptical patches are uniformly-rotating solutions (see also \cite[Chapter 7]{lamb1924hydrodynamics}).  For other simply-connected patches, Deem--Zabusky \cite{deem1978vortex} numerically found families of rotating patches, having $m$-fold symmetry for some integer $m\ge2$. This result was rigorously proved by Burbea \cite{burbea1982motions}, where it was shown that there are bifurcation curves of $m$-fold symmetric patches, emanating from the unit disk with $\Omega=\frac{m-1}{2m}$. Hmidi--Mateu--Verdera \cite{hmidi2013boundary} showed that the solutions on the bifurcation curves have smooth boundary if they are close enough to the unit disk and the analytic boundary regularity was proved by Castro--C\'ordoba--G\'omez-Serrano in \cite{castro2016uniformly}. Recently, Hassainia--Masmoudi--Wheeler \cite{hassainia2019global} showed that those bifurcation curves can be continued as long as the angular fluid velocity in the rotating frame does not vanish on the boundary, and  it actually becomes arbitrarily small as the parameter of the curve approaches to infinity. This is consistent with the numerical/theoretical evidence of the development of $90^{\circ}$ corners in the limiting patches \cite{overman1986steady,wu1984steady}.  See also \cite{de2016doubly1} for multi-connected rotating patches and \cite{cao2019rotating,de2016analytical} for rotating vortex patches in a bounded domain. We also refer to \cite{castro2019uniformly,castro2020global,de2016doubly,garcia2020non,gomez2019existence,hassainia2015v} for other related questions (e.g. smooth setting, non-uniform density and other rotating active scalars).
 
  Interestingly, the angular velocities of all the non-radial examples found in the previous literature are shown to be in $\left(0,\frac{1}{2}\right)$. Indeed, Frankel \cite{fraenkel2000introduction} proved that any simply-connected stationary patch (in other words, rotating with $\Omega=0$) is necessarily a disk and the same radial symmetry result for $\Omega<0$ was proved by Hmidi \cite{hmidi2015trivial} under some additional convexity assumptions on the patch. Hmidi \cite{hmidi2015trivial} also proved that if a simply-connected vortex patch is uniformly-rotating with $\Omega=\frac{1}{2}$, then it must be a disk. The general case was resolved recently by G\'omez-Serrano--Shi--Yao and the author in \cite{gomez2019symmetry}, where they showed that any uniformly-rotating patch with angular velocity $\Omega\in (-\infty,0]\cup[\frac{1}{2},\infty)$ must be radially symmetric whether it is simply-connected or not (see Figure~\ref{results} for the illustration of radial symmetry results).
  
   \begin{figure}[htbp]
\begin{center}
    \includegraphics[scale=1]{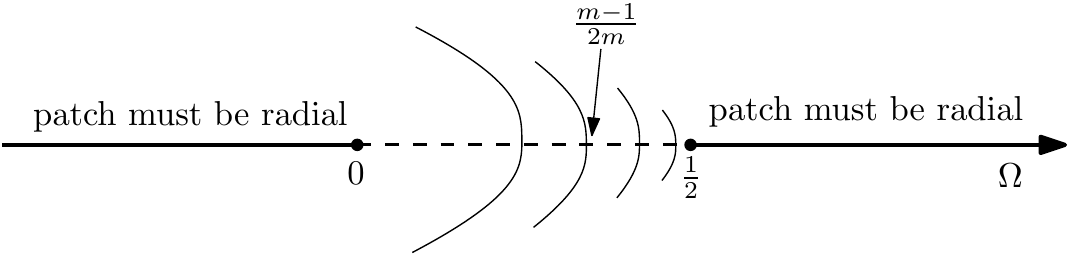}
    \caption{Existence/Non-existence of non-radial solutions depending on $\Omega$. }
   \label{results}
    \end{center}

\end{figure}

\subsection{Main results}
 The goal of this paper is to establish some quantitative estimates for non-radial simply-connected rotating patches, which are known to exist.  From now on, we assume that a bounded domain $D$ is simply-connected and has $C^{2}$ boundary.  If $\omega(x,t) := 1_{D}(R_{\Omega t} x)$ is a uniformly-rotating patch, then the net velocity in the rotating frame has no contribution to the deformation of the boundary $\partial D$, namely,  $\nabla^{\perp}\left((1_D * \mathcal{N})-\frac{\Omega}{2}|x|^2\right)\cdot \vec{n} = 0$, where $\vec{n}$ denotes the outer normal vector on $\partial D$. By integrating this along the boundary, one can derive the following equation for the relative stream function $\Psi$:
 
\begin{align}\label{rotatingpatch}
\Psi(x) := 1_D * \mathcal{N} - \frac{\Omega}{2}|x|^2 = constant \quad \text { for all }x\in\partial D.
\end{align}

In the rest of the paper, we say a pair $(D,\Omega)$ is a solution to \eqref{rotatingpatch} if $1_D$ and $\Omega$ satisfy the equation \eqref{rotatingpatch}.

 \subsubsection{Small angular velocity $\Omega$} 
 Our first main result is about the outmost point on $\partial D$ when the angular velocity $\Omega$ is small. As mentioned earlier, ellipses are uniformly-rotating solutions. More precisely, an ellipse with semi-axes $a,b$ is rotating with angular velocity $\Omega=\frac{ab}{(a+b)^2}$. By imposing $b=\frac{1}{a}$ to keep the area of the patch equal to $\pi$, one can easily see that for any $0<\Omega\le\frac{1}{4}$, there exists an ellipse that is rotating with the given $\Omega$. Moreover, the boundary is stretching as $\Omega$ tends to $0$ in the sense that the length of the major axis is comparable with $\Omega^{-\frac{1}{2}}$. Note that  ellipses are not the only uniformly-rotating solutions for small angular velocities. For example, the existence of secondary bifurcations from  ellipses was numerically observed by Kamm in his thesis \cite{kamm1987shape} and theoretically proved in \cite{castro2016uniformly,hmidi2015bifurcation}. Thus it is a natural question whether every non-radial simply-connected rotating patch with a fixed area and $0 < \Omega \ll 1$ must have its outmost point very far from the origin (center of rotation). In the next theorem, we prove this is indeed true.

\begin{theorem}\label{smallomega}. Let $D\subset \R^2$ be a simply connected domain such that $|D| = |B| = \pi$, where $B$ is the unit disk centered at the origin. Then there exist positive constants $\Omega_0$ and $\kappa_0$ such that if $(D,\Omega)$ is a solution to \eqref{rotatingpatch} with $\Omega\in (0,\Omega_0)$, then either $D = B$, or
\begin{align}\label{estimate1}
\sup_{x\in \partial D}|x| > \kappa_0 \Omega^{-\frac{1}{2}}.
\end{align}

\end{theorem}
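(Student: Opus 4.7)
The plan is to establish the theorem by combining a sharp algebraic identity for rotating patches with the quantitative Saint-Venant inequality and a local rigidity result near the unit disk. As a first observation, since $\partial D$ is a Jordan curve contained in $\overline{B(0,R)}$ and $D$ is its interior (bounded) region, one has $D\subset\overline{B(0,R)}$, so $\pi=|D|\le\pi R^2$ forces $R\ge 1$, with equality iff $D=B$. Hence the task reduces to ruling out non-disk solutions with $R\in(1,\kappa_0\Omega^{-1/2}]$.

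The core step is the derivation of the identity
\[
\frac{\pi}{8}-\int_D\tau_D\,dx=\frac{\Omega(M-\pi/2)}{2(1-2\Omega)},
\]
where $\tau_D$ is the torsion function of $D$ (i.e.\ $-\Delta\tau_D=1$ in $D$, $\tau_D=0$ on $\partial D$) and $M=\int_D|x|^2\,dx$. I would obtain this from two independent relations. First, since $\Delta(c-\Psi)=-(1-2\Omega)$ in $D$ and $c-\Psi=0$ on $\partial D$, one has $c-\Psi=(1-2\Omega)\tau_D$ in $D$; integrating gives $c\pi-E+\Omega M/2=(1-2\Omega)\int_D\tau_D\,dx$, where $E=\int_D(1_D*\mathcal N)\,dx$. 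Second, integrating $\Psi(x\cdot\vec n)$ over $\partial D$ (using $\Psi=c$ there) and expanding via the divergence theorem, together with the symmetrization identity $\int_D x\cdot\nabla(1_D*\mathcal N)\,dx=|D|^2/(4\pi)=\pi/4$ (obtained by swapping $x\leftrightarrow y$ in the double integral), yields $c=\tfrac{1}{8}+E/\pi-\Omega M/\pi$. Substituting eliminates both $c$ and $E$ and produces the boxed identity. Both sides are non-negative by Saint-Venant ($\int_D\tau_D\le\pi/8$) and the moment inequality ($M\ge\pi/2$), with simultaneous equality iff $D=B$.

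To finish, I would combine this identity with the quantitative Saint-Venant inequality $\pi/8-\int_D\tau_D\ge c_0\mathcal A(D)^2$, where $\mathcal A(D)=\inf_{x_0}|D\triangle B(x_0,1)|$ is the Fraenkel asymmetry, and with the trivial upper bound $M\le\pi R^2$. This gives $\mathcal A(D)^2\le C\Omega R^2$, so the hypothesis $R\le\kappa_0\Omega^{-1/2}$ forces $\mathcal A(D)\le C'\kappa_0$. On the other hand, for $\Omega\in(0,\Omega_0)$ with $\Omega_0<1/4$, the linearization of the rotating patch equation at $B$ is invertible (Burbea's bifurcations begin only at $\Omega_m=(m-1)/(2m)\ge 1/4$, and the corresponding eigenvalues $\Omega-(m-1)/(2m)$ stay uniformly bounded away from zero as $\Omega\to 0$), so there is a neighborhood of $B$, uniform in $\Omega\in(0,\Omega_0]$, in which $B$ is the unique rotating patch. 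Upgrading the $L^1$ asymmetry control to a $C^{2,\alpha}$-type control on $\partial D$ via elliptic estimates for the Newtonian potential and the assumed $C^2$ regularity of $\partial D$, and then choosing $\kappa_0$ small enough, forces $D=B$.

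The chief obstacle is the final bootstrap step: propagating the $L^1$-type closeness $|D\triangle B|\lesssim\kappa_0$ into the higher-regularity norm required to apply local rigidity near $B$. A clean route is by compactness---if the conclusion failed, a sequence $(D_n,\Omega_n)$ with $\Omega_n\to 0$ and $R_n\le n^{-1}\Omega_n^{-1/2}$ would have $\mathcal A(D_n)\to 0$, hence converge (after possibly passing to a subsequence and exploiting elliptic regularity for the rotating patch equation) to $B$ in a strong enough norm that local rigidity applies, contradicting $D_n\neq B$. A secondary subtlety, which this compactness approach bypasses but a direct argument would need to confront, is ensuring that the local rigidity neighborhood does not shrink as $\Omega\to 0$; as noted, this follows from uniform invertibility of the linearization on $(0,\Omega_0]$.
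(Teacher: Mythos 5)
Your preliminary reduction and first key step faithfully reproduce what the paper does: the identity you derive (after rescaling, $\pi/8-\int_D\tau_D\,dx=\Omega(M-\pi/2)/(2(1-2\Omega))$ with $p=2\tau_D$) is exactly \eqref{variationalidentity2}, and combining it with the Brasco--De Philippis--Velichkov quantitative Saint-Venant inequality and $M\le\pi R^2$ to get $\mathcal A(D)^2\lesssim\Omega R^2$ is precisely the paper's route to reducing the problem to the regime of small Fraenkel asymmetry.

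The second half is where the proposal diverges and contains a genuine gap. Your plan is to invoke ``local rigidity near $B$, uniform in $\Omega$'' coming from uniform invertibility of the linearization at $B$, and to close via compactness. However, the uniform-invertibility claim is false: the linearized operator at the unit disk has symbol $\Omega-\frac{|n|-1}{2|n|}$ on the Fourier mode $n$, which for $n=1$ is just $\Omega$ and degenerates as $\Omega\to0$. This is the shadow of the translation invariance of the stationary ($\Omega=0$) problem. The correct way to remove this degeneracy is the a priori fact that any rotating patch with $\Omega\neq 0$ has its center of mass at the origin (the paper cites this from \cite{hmidi2015rotating} and uses it in Lemma~\ref{claimford2231} to pin down the optimal disk center $x_0$); your argument never invokes it, so the neighborhood on which you could run a contraction argument does shrink as $\Omega\to 0$ along the $n=1$ direction. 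Separately, even granting the rigidity statement, the compactness step needs to upgrade $\mathcal A(D_n)\to 0$ to convergence of $\partial D_n$ in a norm strong enough ($C^{2,\alpha}$-type) to enter the local rigidity neighborhood; this is exactly a free-boundary regularity bootstrap for the equation $\Psi=\text{const}$ which is not automatic (the stream function $1_D*\mathcal N$ is only $C^{1,\alpha}$ across $\partial D$) and is not carried out.

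For contrast, the paper avoids both obstacles by taking a different route after reducing to small asymmetry. It first shows (Lemma~\ref{comparisonasymmetry}, Lemma~\ref{starshapelem}) that small $\Omega$ and small $\mathcal A(D)$ force $D$ to be a star-shaped graph $1+u$ over $\partial B$ centered at the origin with $\rVert u\rVert_{L^\infty}\lesssim \mathcal A_0(D)\left|\log\mathcal A_0(D)\right|$, using the center-of-mass constraint to control $x_0$. It then exploits a \emph{second} variational identity \eqref{variationalidentity1}, bounding $\int_D\left|x-2\nabla(1_D*\mathcal N)\right|^2$ from above via Loeper's Wasserstein estimate (Proposition~\ref{estimate}) and an explicit transport map (Proposition~\ref{l1andl2bound}); this shows $\rVert u\rVert_{L^2}^2\lesssim\mathcal A_0(D)^2\sim\rVert u\rVert_{L^1}^2$, i.e.\ the reverse H\"older inequality needed to close the loop in \eqref{variationalidentity2} and conclude $\mathcal A_0(D)=0$. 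If you want to pursue your compactness route, you would need to (i) import the center-of-mass constraint to handle the $n=1$ mode, and (ii) prove the regularity bootstrap; alternatively, the paper's transport-map argument bypasses both.
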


\begin{rem}\label{remark}
Note that the power $-\frac{1}{2}$ is sharp since it is achieved by ellipses. Furthermore, one can easily show that \eqref{rotatingpatch} is scaling invariant in the sense that if $(D,\Omega)$ is a solution, then $(D_a,\Omega)$ is also a solution for any $a>0$, where $D_a := \left\{ ax \in \R^2 : x\in D \right\}$. Therefore without the restriction on the size of the patch, \eqref{estimate1} reads as
\begin{align*}
\frac{1}{\sqrt{|D|}}\sup_{x\in \partial D}|x| > \frac{\kappa_0}{\sqrt{\pi}} \Omega^{-\frac{1}{2}}.
\end{align*} 
\end{rem}


\subsubsection{$m$-fold symmetric patches}
It has been known since the work of Burbea \cite{burbea1982motions} that there are $m$-fold symmetric rotating patches for every integer $m\ge2$. From the numerical results \cite{deem1978vortex,hassainia2019global}, it appears that for $m\gg1$, the angular velocity $\Omega$ along the bifurcation curve is very close to $\frac{1}{2}$ (i.e. $0<\frac{1}{2}-\Omega \ll1$ for $m\gg 1$). But there are no such type of quantitative estimates so far.   In the next theorem, we will derive a lower bound of the angular velocity by imposing large $m$. 
\begin{theorem} \label{largem2}
There exist $m_0\ge 2$ and $c>0$ such that if $(\Omega,D)$ is a solution to \eqref{rotatingpatch} and $D$ is simply-connected, non-radial, and $m$-fold symmetric for some $m\ge m_0$ then
\begin{align*}
\frac{1}{2}-\Omega \le \frac{c}{m}.
\end{align*}
\end{theorem}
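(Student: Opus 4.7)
The plan is to combine a Pohozaev-type integral identity that determines $\Omega$ in closed form from the geometry of $D$ with a quantitative Fourier comparison that exploits the $m$-fold symmetry.

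For the identity I would compute $\int_D x\cdot\nabla\Psi\,dx$ in two different ways. Letting $u_D\ge 0$ denote the torsion function of $D$ (so that $-\Delta u_D=1$ in $D$, $u_D|_{\partial D}=0$, and $\Psi = c-(1-2\Omega)u_D$ in $D$), integration by parts using $\Psi\equiv c$ on $\partial D$ and $\int_{\partial D} x\cdot\vec{n}\,dS = 2|D|$ gives
\[
\int_D x\cdot\nabla\Psi\,dx \;=\; 2(1-2\Omega)\int_D u_D\,dx.
\]
On the other hand, $\nabla\Psi = \nabla(1_D*\mathcal{N})-\Omega x$, and the symmetrization $x\leftrightarrow y$ in the integrand of $\int_D\int_D x\cdot(x-y)/|x-y|^2\,dA\,dA$ yields the constant $\tfrac12$, giving $\int_D x\cdot\nabla(1_D*\mathcal{N})\,dx = |D|^2/(4\pi)$, hence $\int_D x\cdot\nabla\Psi\,dx = |D|^2/(4\pi) - \Omega\int_D|x|^2\,dx$. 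Equating the two computations produces
\[
\tfrac12-\Omega \;=\; \frac{I_\ast(D)}{P(D)}, \qquad I_\ast(D):=\tfrac12\int_D|x|^2\,dx - \tfrac{|D|^2}{4\pi}, \quad P(D):=\int_D|x|^2\,dx - 4\int_D u_D\,dx,
\]
where both numerator and denominator are non-negative by Schwarz symmetrization and Saint-Venant's inequality respectively, vanishing simultaneously only when $D$ is a disk centered at the origin (the centroid of an $m$-fold symmetric domain with $m\ge 2$ is automatically at the origin).

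The theorem thus reduces to the quantitative comparison $P(D)\ge c\,m\,I_\ast(D)$ for some absolute $c>0$, for any simply connected $m$-fold symmetric $D$ with $m\gg 1$. In the perturbative regime $R(\theta)=r_0+\tilde R(\theta)$, the $m$-fold symmetry forces $\tilde R$ to be supported on Fourier modes $|k|\ge m$, and a direct computation yields $I_\ast\approx r_0^2\|\tilde R\|_{L^2}^2/2$ and $P\approx 2r_0^2\sum_k|k|\,|\widehat{\tilde R}(k)|^2 \ge 2m\,r_0^2\|\tilde R\|_{L^2}^2$, so the comparison is clear with $c=2$, giving $\tfrac12-\Omega\le 1/(2m)$ in this regime.

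The main obstacle is to remove the closeness-to-disk assumption and make the comparison global. My approach would be to first establish an a priori $L^\infty$-estimate of the form $\|R/\bar R - 1\|_{L^\infty} = o_m(1)$ as $m\to\infty$ for any simply connected $m$-fold symmetric rotating patch, which is the $L^\infty$-bound on the polar graph alluded to in the abstract. Such an estimate should come from the nonlinear boundary-integral form of \eqref{rotatingpatch}: parametrizing $\partial D$ by a polar graph and writing $\psi|_{\partial D}$ as a contour integral along $\partial D$, one uses the fact that the $m$-fold symmetry kills the first $m-1$ complex moments $\int_D z^k\,dA$ of $D$, so that the nonlinear remainders in the kernel expansion only contribute at Fourier frequencies $\ge m$ and produce gains of $1/m$ after a contraction/fixed-point argument on the boundary. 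Once $L^\infty$-closeness to a disk is available, the perturbative Fourier comparison above becomes rigorous and completes the proof. The principal technical hurdle is controlling the nonlinear convolution kernel in this boundary-integral formulation uniformly without a priori smallness of the deviation $\tilde R$, and simultaneously ensuring that the domain is star-shaped so that the polar parametrization is valid.
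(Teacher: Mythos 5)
Your identity is correct and is in fact an algebraic rearrangement of the paper's \eqref{variationalidentity2} (recall $p=2u_D$; the paper's identity $\Omega\bigl(\int_D|x|^2-\tfrac{|D|^2}{2\pi}\bigr)=(1-2\Omega)\bigl(\tfrac{|D|^2}{4\pi}-\int_Dp\bigr)$ rearranges exactly to $\tfrac12-\Omega=I_*(D)/P(D)$). However, the paper does \emph{not} use this identity to prove Theorem~\ref{largem2}; it uses the second identity \eqref{variationalidentity1}, which replaces the torsion functional by $\tfrac12\int_D|x-2\nabla(1_D*\mathcal{N})|^2dx$. This is not a cosmetic choice: that term is bounded by the $2$-Wasserstein distance $W_2^2(1_D,1_B)$ (Loeper's estimate), and an explicit transport map (Proposition~\ref{l1andl2bound}) together with the $2\pi/m$-periodicity of $f(\theta)=\int_0^\theta(u^2+2u)\,ds$ yields the crucial $1/m$ gain at the level of $L^2$ quantities alone. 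Your route requires instead the global comparison $P(D)\ge c\,m\,I_*(D)$, or equivalently a lower bound $\frac{|D|^2}{4\pi}-\int_Dp\,dx\gtrsim m\,I_*(D)$, and this is where the gap sits.

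The difficulty is the step you describe as ``once $L^\infty$-closeness is available, the perturbative Fourier comparison becomes rigorous.'' The shape Hessian of the torsion deficit around the disk is an $\dot H^{1/2}$-type quadratic form $\sum_k(\text{const})\,|k|\,|\widehat{\tilde R}(k)|^2$, and the a~priori estimate you propose only controls $\|\tilde R\|_{L^\infty}$. In the regime $m\gg1$ the boundary is forced to be $m$-fold symmetric, so $\tilde R$ is small in $L^\infty$ but may be highly oscillatory, i.e., $\|\tilde R\|_{H^{1/2}}\gg\|\tilde R\|_{L^2}$ with no a~priori control. To justify the quadratic expansion you would have to show that the cubic-and-higher remainders in the Taylor expansion of the torsional rigidity are $o\bigl(\|\tilde R\|_{H^{1/2}}^2\bigr)$ under smallness of $\|\tilde R\|_{L^\infty}$ alone; this is plausible but nontrivial and is exactly the kind of difficulty the paper's choice of identity is designed to avoid. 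For comparison, the paper obtains the $L^\infty$ bound not by a contraction/fixed-point argument on the boundary integral but by a direct decomposition $1_D*\mathcal{N}=\varphi^r+\varphi_m$ and the pointwise estimate $|\nabla\varphi_m|\lesssim r/m$ (Lemmas~\ref{derivatives},~\ref{phimbound}), and the paper uses the identity~\eqref{variationalidentity2} in the \emph{opposite} direction (Lemma~\ref{lambdaestimate}, $\lambda\gtrsim\|u\|_{L^2}^2/\|u\|_{L^\infty}$) as a lower bound on $\lambda$, never as an upper bound. In short: your identity is right, your reduction to a torsion-versus-second-moment comparison is a genuinely different and more delicate route, and the missing ingredient is a rigorous justification of the quadratic expansion of $P(D)$ under only $L^\infty$ control of the polar graph.
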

We emphasize that this theorem holds for a general simply-connected patch, which does not need to lie on the bifurcation curve.

For $m$-fold symmetric solutions on the global bifurcation curves constructed in \cite{hassainia2019global}, we will also estimate the difference between a rotating patch and the unit disk.  To be precise, we will focus on the curves,
\begin{align*}
\mathscr{C}_m:= \left\{ \left( \tilde{u}_m(s),\tilde{\Omega}_m(s)\right) \in C^{2}(\mathbb{T})\times \left(0,\frac{1}{2}\right) : s\in [0,\infty)\right\} \quad \text{ for  $m\ge 2$, }
\end{align*}
that satisfy the following properties (see \cite[Theorem 1.1]{hassainia2019global} for the details):

\begin{itemize}
\item[(A1)] $\tilde{u}_m(s) \in \left\{ u \in C^{2}(\mathbb{T}) : u(\theta) = \sum_{n=1}^{\infty}a_{n}\cos(nm\theta) \ \text{ for some }(a_n)_{n=1}^{\infty}\text{ and }u> -1\right\}$.
\item[(A2)] $(D^{\tilde{u}_m(s)},\tilde{\Omega}_m(s))$ is a solution for \eqref{rotatingpatch}, where $ D^u := \left\{ r(\cos\theta, \sin\theta) \in \R^2 : 0\le r<(1+u(\theta)),\  \theta\in \mathbb{T}\right\}$.

\item[(A3)] $\partial_\theta u(\theta) < 0$ for all $\theta\in (0,\frac{\pi}{m})$, where $u=\tilde{u}_m(s)$. 
\item[(A4)] $(\tilde{u}_m(0),\tilde{\Omega}_m(0)) = \left( 0 ,\frac{m-1}{2m}\right)$.
\end{itemize}

For such curves, we have the following theorem:

\begin{theorem}\label{largem}
Let $\mathscr{C}_{m} :=\left\{ (\tilde{u}_m(s),\tilde{\Omega}_m(s)) \in  C^{2}(\mathbb{T}) \times \left( 0 ,  \frac{1}{2} \right) : \  0\le s < \infty\right\}$ be a continuous curve that satisfies the properties (A1)-(A4). Then there exist constants $c>0$ and $m_0 \ge 3$ such that if $m\ge m_0$, then
 
 \begin{align*}
 \rVert \tilde{u}_m(s) \rVert_{L^{\infty}(\mathbb{T})} \le \frac{c}{m} \text{ for all }s\ge 0.
 \end{align*}

\end{theorem}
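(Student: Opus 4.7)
The plan is to combine Theorem~\ref{largem2} with the boundary equation \eqref{rotatingpatch} evaluated at the two extremal boundary points of $D^u$, extracting a quantitative smallness estimate for $\tilde u_m(s)$ uniform in $s$. Fix $s\ge0$ and set $u:=\tilde u_m(s)$, $\Omega:=\tilde\Omega_m(s)$, $r^*:=1+u(0)$, $r_*:=1+u(\pi/m)$. By (A1), each Fourier mode $\cos(nm\theta)$ integrates to zero on $[0,\pi/m]$, so $\int_0^{\pi/m} u\,d\theta=0$; together with the monotonicity (A3) this forces $u(\pi/m)\le 0\le u(0)$, and therefore
\[
\|u\|_{L^\infty(\T)}=\max\{u(0),-u(\pi/m)\}\le u(0)-u(\pi/m)=r^*-r_*.
\]
Hence the theorem reduces to proving $r^*-r_*\le c/m$.

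Two ingredients prepare the ground. First, since $D^u$ is simply-connected, $m$-fold symmetric, and non-radial whenever $u\not\equiv 0$, Theorem~\ref{largem2} yields $\ep:=1-2\Omega\le 2c_1/m$ for $m\ge m_1$. Second, the relative stream function $\Psi:=1_{D^u}*\mathcal N-\tfrac{\Omega}{2}|x|^2$ is constant on $\pa D^u$ and satisfies $\Delta\Psi=\ep$ inside $D^u$, so $\Psi-c=\ep\,\phi$, where $\phi$ is the torsion function of $D^u$. After first establishing a uniform bound on $\mathrm{diam}(D^u)$ along $\mathscr C_m$ (by an independent bootstrap using (A3) and $\tilde\Omega_m(s)>0$), standard elliptic estimates give $|\nabla\Psi|\le C\ep$ on $\pa D^u$, equivalently a near-overdetermined condition $|\pa_n(1_{D^u}*\mathcal N)-\Omega\,x\cdot n|\le C\ep$.

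The main step evaluates \eqref{rotatingpatch} at $x^*:=r^* e_0$ and $x_*:=r_*(\cos\tfrac{\pi}{m},\sin\tfrac{\pi}{m})$ and subtracts:
\[
(1_{D^u}*\mathcal N)(x^*)-(1_{D^u}*\mathcal N)(x_*)=\tfrac{\Omega}{2}(r^*+r_*)(r^*-r_*).
\]
Split the left-hand side into an angular variation $(1_{D^u}*\mathcal N)(x^*)-(1_{D^u}*\mathcal N)(r^* e_{\pi/m})$ on the circle $|x|=r^*$, plus a radial variation $(1_{D^u}*\mathcal N)(r^* e_{\pi/m})-(1_{D^u}*\mathcal N)(x_*)$ along the ray $\theta=\pi/m$. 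The $m$-fold symmetry of $D^u$ forces the Fourier expansion of $1_{D^u}*\mathcal N$ restricted to any circle to be supported in frequencies $\{nm\}_{n\ge 1}$; combining this Fourier gap with the near-overdetermined data above bounds the angular part by $C(r^*-r_*)/m+C\ep$. The radial part is handled by comparison with the explicit value of $1_{B_R}*\mathcal N$ for a disk of effective radius $R\approx 1$, matching the right-hand side $\tfrac{1}{2}(r^*+r_*)(r^*-r_*)$ modulo an $O(\ep(r^*-r_*))$ correction once $\Omega=\tfrac{1}{2}-\ep/2$ is substituted. Rearranging yields $(r^*-r_*)\bigl(1-\tfrac{C}{m}\bigr)\le C'\ep\le C''/m$, whence $r^*-r_*\le c/m$ for $m\ge m_0$.

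The main obstacle is the angular-variation estimate: one must quantitatively bound the oscillation of $1_{D^u}*\mathcal N$ between two points on a common circle and extract the $1/m$ factor from the Fourier gap induced by $m$-fold symmetry, rather than by the size of $u$ itself (which would be circular). Interleaving this Fourier-analytic input with the near-overdetermined boundary condition $|\nabla\Psi|\le C\ep$ coming from Theorem~\ref{largem2}, while tracking the correct order in $m$ throughout, is the technical heart of the argument. A secondary difficulty is the preliminary uniform diameter bound on $D^u$ along $\mathscr C_m$ needed for the elliptic step, which I expect to follow from an independent argument using (A3) and $\tilde\Omega_m(s)>0$.
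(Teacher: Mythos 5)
Your reduction to bounding $r^*-r_*$ is clean, and the idea of evaluating \eqref{rotatingpatch} at two extremal boundary points has the right flavor. But the proof as proposed has three genuine gaps, the first of which is structural.

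\textbf{Circularity of the gradient bound.} You write $\Psi-c=-\tfrac{\ep}{2}p$ with $p$ the torsion function and then claim $|\nabla\Psi|\le C\ep$ on $\partial D$ via ``standard elliptic estimates,'' after a ``preliminary uniform diameter bound.'' A diameter bound on $D$ is \emph{not} enough to give a uniform bound on $|\nabla p|$ on $\partial D$: the normal derivative of the torsion function can blow up at regions of high concave curvature, and the constant in the elliptic estimate depends on a uniform exterior-sphere (or $C^{1,\alpha}$) condition for $\partial D$. Such uniform geometric control along $\mathscr{C}_m$ is essentially equivalent to the conclusion of the theorem — the paper explicitly flags that the scenario $\min_\theta(1+\tilde u_m(s))\to 0$ has not been excluded a priori (so spiky boundaries are in play), and Lemmas~\ref{boundforvarphi2},~\ref{derivativeestimates} and~\ref{nospike} exist precisely to rule it out. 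You cannot invoke the conclusion as a hypothesis. Relatedly, your normalization also needs care: (A1) gives $\int_{\mathbb{T}}\tilde u_m\,d\theta=0$, not $|D^{\tilde u_m(s)}|=\pi$, while Theorem~\ref{largem2} (through Lemma~\ref{lambdaestimate}, Corollary~\ref{rmaxbound}) is stated under the $|D|=\pi$ normalization; the paper handles this via the rescaled $\hat u_m(s)$ at the very end of the proof.

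\textbf{The radial-variation estimate is off by the wrong order.} Integrating $\partial_r\varphi^r(r)=|D\cap B_r|/(2\pi r)$ over $r\in(r_*,r^*)$ and comparing to a disk of radius $R\approx 1$ produces an error of size $O\big((r^*-r_*)^2\big)$ or $O\big((r^*-r_*)(1-r_*)\big)$, not $O(\ep(r^*-r_*))$ as you claim: the uncertainty in $|D\cap B_r|$ for $r\in(r_*,r^*)$ is $\sim r^{*2}-r_*^2$, which is independent of $\ep$. Consequently the rearrangement $(r^*-r_*)(1-C/m)\le C'\ep$ does not follow; the error term swallows the main term unless $r^*-r_*$ is already small, which is what you are trying to prove.

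\textbf{The angular-variation estimate is asserted, not proved.} Extracting a $1/m$ factor from the Fourier gap created by $m$-fold symmetry is the technical heart of the matter, and it is not automatic: the Fourier modes are supported on multiples of $m$, but their amplitudes depend on the shape of $D$. The paper proves exactly this gain in Lemma~\ref{derivatives} via explicit Fourier formulas (Lemma~\ref{derivatives335}) and a careful splitting of the kernel, and the more refined linear-in-$\delta$ versions in Lemma~\ref{derivativeestimates} are needed to close Lemma~\ref{nospike}. Your proposal substitutes a one-line assertion for what is, in the paper, two lemmas and an appendix.

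By contrast, the paper's route does not attempt a uniform $|\nabla\Psi|$ bound on $\partial D$. It instead bounds the ratio $u'=-\partial_\theta\Psi/\partial_r\Psi$ on the inner part of the boundary with the correct (linear-in-$\delta$) cancellations (Lemmas~\ref{boundforvarphi2},~\ref{derivativeestimates}), deduces that $\partial D$ cannot be a thin spike (Lemma~\ref{nospike}), upgrades $\|u\|_{L^2}$ smallness to $\|u\|_{L^\infty}$ smallness, and closes by contradiction against Theorem~\ref{largem2} through the variational lower bound for $\lambda$ in Lemma~\ref{lambdaestimate}. Your strategy, even if the elliptic gradient bound could be salvaged, would still need the delicate $1/m$ angular estimate and a corrected radial estimate; as written, the argument does not close.
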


 Although each curve emanates from the unit disk, the possibility that $\min_{\theta\in\mathbb{T}}(1+\tilde{u}_m(s))$ tends to $0$ along the curve  has not been completely eliminated (\cite[Theorem~4.6, Lemma~6.6]{hassainia2019global}), while it does certainly happen for ellipses ($m=2$). The significant difference between $m=2$ and $m\ge3$ is that if $m\ge3$, then the stream function ($1_D *\mathcal{N}$) behaves quite nicely, namely, $\frac{|\nabla(1_D*\mathcal{N})|}{|x|}$ is globally bounded (especially near the origin) independently of $D$  (Lemma~\ref{derivatives}. See also \cite{elgindi2016remarks,elgindi2020symmetries}, and the references therein, where global boundedness of gradient of $m$-fold symmetric stream functions  was proved). This will play a crucial role to eliminate the scenario that  $\partial D$ almost touches the origin when $\frac{1}{2}-\Omega$ is sufficiently large compared to $\frac{1}{m}$ in Lemma~\ref{phimbound}.

We summarize the main results in Figure~\ref{diagam1}
 \begin{figure}[htbp]
\begin{center}
    \includegraphics[scale=1]{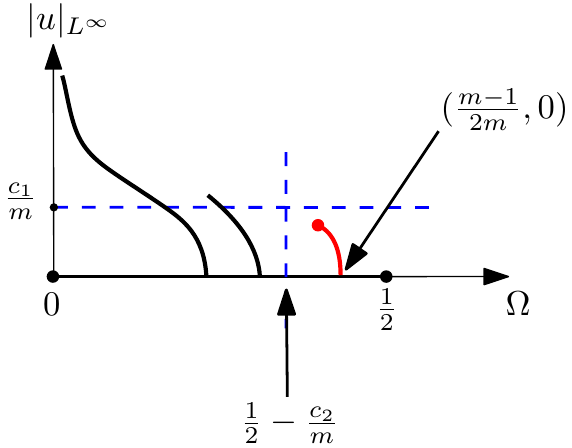}
    \caption{ Illustration of the main results. a) Any patch with $\Omega \ll 1$ must have its outmost point very far from the origin. b) A bifurcation curve of $m$-fold symmetric patches cannot be continued beyond the blue dashed lines for large $m$. }
   \label{diagam1}
    \end{center}

\end{figure}

\paragraph{Idea of the proof} 
The starting point for Theorem~\ref{smallomega} and~\ref{largem2} is the variational formulation of \eqref{rotatingpatch}, used by G\'omez-Serrano, Shi, Yao and the author in \cite{gomez2019symmetry}. Namely, if $(D,\Omega)$ is a solution to \eqref{rotatingpatch} and $\partial D$ is $C^{2}$, then formally, $1_D$ can be thought of as a critical point of the functional,
\begin{align*}
\mathcal{I}(\rho) := \frac{1}{2}\int_{\R^2} \rho * \mathcal{N}(x)\rho(x) dx - \Omega\int_{\R^2} \frac{|x|^2}{2}\rho(x)dx =: \mathcal{I}_1(\rho) - \Omega\mathcal{I}_2(\rho),
\end{align*}
under measure-preserving perturbations. More precisely, it holds that
\begin{align}\label{firstvariation}
 \int_D \vec{v} \cdot \nabla \Psi (x) dx = 0, \quad \text{ for any }v\in C^{2}(\overline{D})\text{ such that } \nabla \cdot \vec{v}=0,
\end{align}
 Indeed, \eqref{firstvariation} follows directly from  \eqref{rotatingpatch} and the integration by parts. By choosing a specific vector field $\vec{v} := x+\nabla p$, where $p$ is defined as the solution to the Poisson equation,
\begin{align}\label{defp}
\begin{cases}
\Delta p = -2 & \text{ in }D, \\
p = 0 & \text{ on }\partial D,
\end{cases}
\end{align}
G\'omez-Serrano  et al. derived the following identity for uniformly-rotating patches:
\begin{align}\label{firstvariation3}
2\Omega\left( \int_{D}\frac{|x|^2}{2}dx-\frac{|D|^2}{4\pi} \right) = (1-2\Omega)\left( \frac{|D|^2}{4\pi}-\int_D p dx\right).
\end{align}

Note that both parentheses are strictly positive if $D\ne B$, where $B$ is the unit disk centered at the origin. Thanks to the result by Brasco--De Philippis--Velichkov in \cite[Proposition 2.1]{brasco2015faber}, one can find a lower bound of the right-hand side of \eqref{firstvariation3} in terms of $|D\triangle B|$, namely, $\frac{|D|^2}{4\pi} - \int_D pdx \gtrsim \left| D \triangle B \right|^2$. Hence \eqref{firstvariation3} yields that
\begin{align*}
| D\triangle B|^2 \lesssim \Omega \left(\int_{D}\frac{|x|^2}{2}dx-\frac{|D|^2}{4\pi}\right) < \Omega\sup_{x\in \partial D}|x|^2,
\end{align*}
for $\Omega \ll 1$.
Therefore we only need to rule out the case where $|D\triangle B|$ is small. Assuming $|D\triangle B|$ and $\Omega$ are sufficiently small, we will prove (Lemma~\ref{starshapelem}) that $D$ is necessarily star-shaped and the boundary can be parametrized by $(1+u(x))x$, for $x\in \partial B_1$ and some $u\in C^{2}(\partial B)$. However, the difficulty is that we have $|D\triangle B|\sim\rVert u \rVert_{L^{1}({\partial B})}$ and $\int_{D}\frac{|x|^2}{2}dx-\frac{|D|^2}{4\pi}\sim \rVert u \rVert_{L^{2}({\partial B})}^2$, while $L^1$ and $L^2$ are not comparable.
  The key idea is to use a different vector $\vec{v} :=x-2\nabla \left(1_D*\mathcal{N}\right)$ in \eqref{firstvariation}, which gives another identity for any simply-connected rotating patches,

\begin{align}\label{firstvariation2}
\left( \frac{1}{2}-\Omega\right) \left( \int_{D}|x|^2dx-\frac{|D|^2}{2\pi} \right) = \frac{1}{2}\int_D | x- 2\nabla \left( 1_D * \mathcal{N} \right) |^2dx.
\end{align}

Thanks to the result of Loeper \cite[Proposition 3.1]{loeper2006uniqueness}, the right-hand side in \eqref{firstvariation2} can be estimated in terms of $2$-Wasserstein distance between $1_D dx$ and $1_B dx$ (see Proposition~\ref{estimate}). In the proof of Proposition~\ref{l1andl2bound}, we will construct an explicit transport map and obtain the bound for the right-hand side: If $\rVert u \rVert_{L^{\infty}(\mathbb{T})} \le \frac{1}{2}$, 
\begin{align}\label{boundforrhs}
\int_D |x - 2\nabla \left( 1_D * \mathcal{N} \right) |^2 dx \lesssim \left( a \int_{\mathbb{T}}|u|^2d\theta+\frac{1}{a} \int_{\mathbb{T}}f(\theta)^2d\theta \right),
 \end{align}
 where $f(\theta) := \int_0^{\theta} u(s)^2+2u(s)ds$ and $a \in (2\rVert u \rVert_{L^{\infty}(\mathbb{T})}, 1)$. Since $\rVert f \rVert_{L^{\infty}(\mathbb{T})} \lesssim \rVert u \rVert_{L^{1}(\mathbb{T})}$, \eqref{firstvariation2} and \eqref{boundforrhs} will give us $\rVert u \rVert_{L^{1}(\mathbb{T})} \sim \rVert u \rVert_{L^{2}(\mathbb{T})}$ for $0<\Omega \ll 1$, if we can choose $a$ sufficiently small.

 The proof of Theorem~\ref{largem2} also relies on the identity \eqref{firstvariation2}.   By imposing $m$-fold symmetry on the patch, we can lower the total cost of the transportation, from which we can obtain a suitable upper bound of $\frac{1}{2}-\Omega$ when $m$ is sufficiently large. Indeed, if $u$ is $\frac{2\pi}{m}$ periodic, then $f$ is also $\frac{2\pi}{m}$ periodic as well. Thus by choosing large $m$, we can lower $\rVert f \rVert_{L^{\infty}(\mathbb{T})}$ on the right-hand side in \eqref{boundforrhs} by using Jensen's inequality.

 Theorem~\ref{largem} will be proved by showing that if $\rVert \tilde{u}_m(s) \rVert_{L^{\infty}}$ is too large, then $\frac{1}{2}-\Omega$ must be large enough to contradict Theorem~\ref{largem2}. The main difficulty is that $\frac{1}{2}-\Omega$ can be estimated in terms of $\rVert \tilde{u}_m(s) \rVert_{L^2(\mathbb{T})}$ by using the identity \eqref{firstvariation3} (Lemma~\ref{lambdaestimate}), while  $L^2$ and $L^{\infty}$  are not comparable. We resolve this issue by estimating the gradient of the stream function in a very delicate way (Lemma~\ref{boundforvarphi2} and ~\ref{derivativeestimates}).

\paragraph{Notations.} In the rest of the paper, we will fix the following notations. We denote by $B_r(x_0)$ the disk, 
 \begin{align*}
 B_r(x_0) := \left\{ x\in \R^2 : | x- x_0 | < r \right\}.
 \end{align*}
 If $r=1$ or $x_0$ coincides with the origin, then we will omit it in the notation. For example, the unit disk centered at $x_0$ is denoted by $B(x_0)$ and  the disk centered at the origin with radius $r$ is denoted by $B_r$. Therefore the unit disk centered at the origin will be  simply denoted by $B$.  For a measurable set $D$ in $\R^2$, we denote the Lebesgue measure of $D$ by $|D|$.  For two domains $D_1$ and $D_2$ in $\R^2$, their symmetric difference is denoted by $D_1 \triangle D_2$, that is,
 \begin{align*}
 D_1 \triangle D_2 := \left( D_1\backslash D_2 \right) \cup \left( D_2\backslash D_1 \right).
 \end{align*}
 
 For a measure $\mu$ in $\R^2$ and a  $\mu$-measurable map $T:\R^2 \mapsto \R^2$, we denote the pushforward measure of $\mu$ by $T_{\#}\mu$, that is, for any $\mu$-measurable set $E$, we have
 \begin{align*}
 T_{\#}\mu(E) = \mu(T^{-1}(E)).
 \end{align*}

For two quantities $X$ and $Y$, we write $X \lesssim Y$ if there is a constant $C>0$ such that $X \le C Y$ where $C$ does not depend on any variables. Furthermore, we shall write $X \sim Y$ if $X \lesssim Y$ and $Y\lesssim X$. 
Lastly, we always assume that $D$ is simply-connected and $\partial D$ is  $C^{2}$.

\section{Quantitative estimates for small $\Omega$}

This section is devoted to the proof of Theorem~\ref{smallomega}. Throughout this section, we will always assume that $|D| = |B| = \pi$. We begin this section by proving two identities for simply-connected rotating patches.

\begin{lemma}\label{firstvariations}
Let $(D,\Omega)$ be a solution to \eqref{rotatingpatch}. Then it holds that
\begin{align}
&\Omega\left( \int_{D}{|x|^2}dx-\frac{|D|^2}{2\pi} \right) = (1-2\Omega)\left( \frac{|D|^2}{4\pi}-\int_D p dx\right), \label{variationalidentity2} \\
&\left( \frac{1}{2}-\Omega\right) \left( \int_{D}|x|^2dx-\frac{|D|^2}{2\pi} \right) = \frac{1}{2}\int_D | x- 2\nabla \left( 1_D * \mathcal{N} \right) |^2dx. \label{variationalidentity1}
\end{align}
where $p$ is the unique solution to \eqref{defp}.
\end{lemma}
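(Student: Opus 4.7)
The plan is to derive both identities by applying the variational equation \eqref{firstvariation} to two carefully chosen divergence-free test vector fields, exactly as hinted in the introduction. The key technical ingredient is a symmetry identity for $\int_D x \cdot \nabla(1_D*\mathcal{N})\,dx$ that cleanly eliminates the boundary terms.

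For the first identity \eqref{variationalidentity2}, I would plug $\vec{v} := x + \nabla p$ into \eqref{firstvariation}, where $p$ solves \eqref{defp}. The divergence-free condition holds because $\nabla\cdot x = 2$ and $\Delta p = -2$ in $D$. Expanding $\nabla\Psi = \nabla(1_D*\mathcal{N}) - \Omega x$, I would compute four integrals: $\int_D x\cdot x \,dx = \int_D |x|^2\,dx$; $\int_D \nabla p \cdot x\,dx = -2\int_D p\,dx$ by integration by parts using $p=0$ on $\partial D$ and $\nabla\cdot x=2$; and $\int_D \nabla p \cdot \nabla u\,dx = -\int_D p\,dx$ using $\Delta u = 1$ in $D$ together with $p=0$ on $\partial D$. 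The last and most delicate piece, $I := \int_D x\cdot\nabla u\,dx$, I would compute by writing it as the double integral
\begin{equation*}
I = \frac{1}{2\pi}\int_D\int_D \frac{x\cdot(x-y)}{|x-y|^2}\,dy\,dx
\end{equation*}
and then symmetrizing: relabeling $x\leftrightarrow y$ and averaging yields $2I = \frac{1}{2\pi}\int_D\int_D \frac{|x-y|^2}{|x-y|^2}\,dy\,dx = \frac{|D|^2}{2\pi}$, hence $I = \frac{|D|^2}{4\pi}$. Collecting all four pieces and rearranging gives precisely \eqref{variationalidentity2}.

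For the second identity \eqref{variationalidentity1}, I would apply \eqref{firstvariation} to $\vec{v} := x - 2\nabla u$, which is divergence-free since $\nabla\cdot\vec{v} = 2 - 2\Delta u = 0$ in $D$. Expanding
\begin{equation*}
0 = \int_D (x - 2\nabla u)\cdot(\nabla u - \Omega x)\,dx
\end{equation*}
produces four terms: the cross-terms again reduce to $\int_D x\cdot\nabla u\,dx = \frac{|D|^2}{4\pi}$, and this lets me solve for $\int_D|\nabla u|^2\,dx$ in terms of $\Omega$, $|D|$, and $\int_D|x|^2\,dx$. On the other hand, expanding the square on the right-hand side of \eqref{variationalidentity1} gives
\begin{equation*}
\tfrac{1}{2}\int_D|x-2\nabla u|^2 dx = \tfrac{1}{2}\int_D|x|^2 dx - 2\int_D x\cdot\nabla u \,dx + 2\int_D|\nabla u|^2 dx,
\end{equation*}
into which I substitute the value of $\int_D |\nabla u|^2\,dx$ obtained above, and verify it equals $(\tfrac{1}{2}-\Omega)(\int_D|x|^2\,dx - \tfrac{|D|^2}{2\pi})$ by direct algebra.

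The only non-routine step is the symmetrization trick yielding $\int_D x\cdot\nabla(1_D*\mathcal{N})\,dx = \tfrac{|D|^2}{4\pi}$; once this is in hand, both identities follow by elementary integrations by parts. The choice of test vector fields is inspired by \cite{gomez2019symmetry} (for $x+\nabla p$) and by the desire to produce the squared quantity $|x - 2\nabla(1_D*\mathcal{N})|^2$ on the right-hand side of \eqref{variationalidentity1}, which is what the subsequent Wasserstein estimates will feed on.
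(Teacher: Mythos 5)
Your proposal is correct and follows essentially the same route as the paper's proof: both identities come from plugging the same two divergence-free test fields $\vec{v}=x+\nabla p$ and $\vec{v}=x-2\nabla(1_D*\mathcal{N})$ into \eqref{firstvariation}, and the key non-routine step is the symmetrization identity $\int_D x\cdot\nabla(1_D*\mathcal{N})\,dx=\frac{|D|^2}{4\pi}$, which you derive exactly as the paper does. The only cosmetic difference is bookkeeping: the paper integrates $\int_D\nabla p\cdot\nabla\Psi\,dx$ in one stroke to get $-(1-2\Omega)\int_D p\,dx$, whereas you split it into two integrations by parts, and for \eqref{variationalidentity1} the paper isolates $\int_D|\nabla(1_D*\mathcal{N})|^2\,dx$ and rewrites it via the completed square rather than expanding the square first; the algebra is the same.
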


 \begin{proof}
  The proof of \eqref{variationalidentity2} can be found in \cite[Theorem 2.2]{gomez2019symmetry}. For the sake of completeness, we give a proof below.
  
In order to prove \eqref{variationalidentity2}, we plug $\vec{v} = x+\nabla p$ into \eqref{firstvariation} to get
\begin{align}
0 &= \int_D (x+\nabla p)\cdot  \nabla \left(1_D * \mathcal{N} - \frac{\Omega}{2} |x|^2\right)dx \nonumber\\
 &= \int_D x \cdot \nabla\left( 1_D * \mathcal{N}\right) - \Omega\int_D |x|^2dx + \int_D \nabla p \cdot \nabla \left( 1_D * \mathcal{N} -\frac{\Omega}{2} |x|^2\right) dx \nonumber\\
 & = \int_D x\cdot \nabla \left( 1_D *\mathcal{N}\right) - \Omega \int_D |x|^2 dx - (1-2\Omega)\int_D p dx, \label{equation91}
\end{align}
where we used divergence theorem for the last equality.  Note that the first integral can be computed as
\begin{align}
\int_{D} x\cdot \nabla \left(1_D * \mathcal{N}\right)dx & = \frac{1}{2\pi}\int_D \int_D \frac{x\cdot (x-y)}{|x-y|^2}dydx = \frac{|D|^2}{4\pi}, \label{equation90}
\end{align}
where the last equality is obtained by exchanging $x$ and $y$ in the double integral, and then taking the average with the original integral. Therefore \eqref{equation91} and \eqref{equation90} yield
\begin{align*}
0 = \frac{|D|^2}{4\pi} - \Omega\int_D |x|^2 dx - (1-2\Omega)\int_D pdx,
\end{align*}
which is equivalent to \eqref{variationalidentity2}. 

 For \eqref{variationalidentity1}, we choose $\vec{v} = x-2\nabla\left( 1_D *\mathcal{N}\right)$ in \eqref{firstvariation} and obtain
 \begin{align}
  0 &= \int_D (x-2\nabla (1_D * \mathcal{N})) \cdot \nabla \left(1_D * \mathcal{N} - \frac{\Omega}{2} |x|^2\right)dx \nonumber \\
   &= (1+2\Omega)\frac{|D|^2}{4\pi} - 2\int_{D} | \nabla \left( 1_D * \mathcal{N}\right) |^2 dx - \Omega\int_D |x|^2dx, \label{equation92}
 \end{align}
 where we used \eqref{equation90}. Since $\int_{D} |\nabla \left( 1_D *\mathcal{N}\right)|^2 dx$ can be computed as
 \begin{align*}
 2\int_{D} |\nabla \left( 1_D *\mathcal{N}\right)|^2 dx  & = \frac{1}{2}\int_D |x-2\nabla\left( 1_D * \mathcal{N}\right)|^2dx + 2\int_D x\cdot \nabla \left(1_D*\mathcal{N}\right)dx - \frac{1}{2}\int_D |x|^2dx \\
  & = \frac{1}{2}\int_D |x-2\nabla\left( 1_D * \mathcal{N}\right)|^2dx + \frac{|D|^2}{2\pi} - \frac{1}{2}\int_D |x|^2dx,
 \end{align*}
 plugging it into \eqref{equation92} yields \eqref{variationalidentity1}. 
\end{proof} 

Thanks to Lemma~\ref{firstvariations}, the angular velocity can be estimated by comparing the quantities, $\int_{D}{|x|^2}dx-\frac{|D|^2}{2\pi}$, $\frac{|D|^2}{4\pi}-\int_D p dx$ and $\int_D | x- 2\nabla \left( 1_D * \mathcal{N} \right) |^2dx$, which vanish if and only if $D=B$. To estimate those quantities for non-radial patches, we use the following notion of asymmetry.

 \begin{defi}\cite[Section 1.1]{fusco2009stability}
 For a bounded domain $D\subseteq \R^2$, the Fraenkel asymmetry $\mathcal{A}(D)$ is defined by
 \begin{align*}
 \mathcal{A}(D):=\inf_{x\in \R^2}\left\{ \frac{|D\triangle B_r(x)|}{|D|} : \pi r^2 = |D| \right\}.
 \end{align*}
 \end{defi}
 
 If $\mathcal{A}(D)$ is not small, then we can find a lower bound of the right-hand side in \eqref{variationalidentity2} by using the following result:

\begin{prop}\cite[Proposition 2.1]{brasco2015faber}\label{alphasquared}
Let $p$ be as in \eqref{defp} and $|D| = \pi$. Then there exists a constant $\sigma>0$ such that
\begin{align}\label{alphasquared1}
\frac{|D|^2}{4\pi} - \int_D pdx \ge \sigma \mathcal{A}(D)^2.
\end{align}
\end{prop}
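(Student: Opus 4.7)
The plan is to adapt Fuglede's quantitative isoperimetric strategy to the torsion deficit $\delta(D) := \frac{|D|^2}{4\pi} - \int_D p \, dx$, combined with a selection principle that reduces the claim to nearly spherical competitors. The classical Saint-Venant inequality, obtained by applying P\'olya--Szeg\H{o} rearrangement to $p$, already gives $\delta(D) \ge 0$ with equality iff $D$ is a ball, so the task is only to upgrade this nonnegativity to a genuine quadratic lower bound in the Fraenkel asymmetry $\mathcal{A}(D)$.

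For the reduction I would argue by contradiction: suppose there is a sequence of sets $D_n$ with $|D_n| = \pi$ and $\delta(D_n)/\mathcal{A}(D_n)^2 \to 0$. Through a penalized variational problem of roughly the form
\begin{equation*}
\min \Bigl\{ -\int_E p_E \, dx + \Lambda \bigl| \mathcal{A}(E) - \mathcal{A}(D_n) \bigr| + \mu \bigl| \, |E| - \pi \, \bigr| \Bigr\},
\end{equation*}
one produces replacement sets $E_n$ that still violate the quadratic bound but are moreover quasi-minimizers of a constrained torsion functional. Standard regularity theory for such quasi-minimizers then yields, up to translation, $E_n \to B$ in $C^{1,\alpha}$, so that $\partial E_n$ is nearly spherical:
\begin{equation*}
\partial E_n = \bigl\{ (1 + u_n(\theta))(\cos\theta, \sin\theta) : \theta \in \mathbb{T} \bigr\}, \qquad \| u_n \|_{C^{1,\alpha}(\mathbb{T})} \to 0.
\end{equation*}

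The next step is an explicit Fuglede-type expansion on such nearly spherical sets: writing $u_n(\theta) = \sum_k (a_k \cos k\theta + b_k \sin k\theta)$ and exploiting the explicit form $p_B(x) = (1 - |x|^2)/2$, a direct perturbative calculation of the torsion functional produces
\begin{equation*}
\delta(E_n) = c \sum_{k \ge 1} (k-1)(a_k^2 + b_k^2) + o\bigl( \| u_n \|_{L^2}^2 \bigr)
\end{equation*}
for an absolute constant $c > 0$. The area constraint $|E_n| = \pi$ controls the zero mode $a_0$ at higher order, and choosing the optimal ball in $\mathcal{A}(E_n)$ to be centered at the barycenter of $E_n$ forces the modes $a_1, b_1$ to be of higher order as well. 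Since $\mathcal{A}(E_n) \le C \| u_n \|_{L^1(\mathbb{T})} \le C \| u_n \|_{L^2(\mathbb{T})}$ for nearly spherical sets, the surviving modes satisfy $\sum_{k \ge 2} (k-1)(a_k^2 + b_k^2) \ge \| u_n \|_{L^2}^2$, giving $\delta(E_n) \gtrsim \mathcal{A}(E_n)^2$ and contradicting the choice of $D_n$.

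The main obstacle will be the selection principle: one must design the penalized functional so that its minimizers $E_n$ genuinely inherit the asymptotic failure of the quadratic bound while also enjoying enough regularity to be treated as nearly spherical. Establishing existence, $C^{1,\alpha}$ regularity, and the correct translation for these quasi-minimizers is a nontrivial step and is the technical heart of the argument; by contrast, the Fuglede expansion itself is a delicate but tractable second-variation computation that supplies the coercivity once the $k = 0, 1$ modes have been eliminated by the volume and barycenter constraints.
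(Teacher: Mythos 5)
The paper does not prove this proposition; it simply cites it from Brasco--De Philippis--Velichkov \cite{brasco2015faber}, Proposition 2.1, as a quantitative form of the Saint-Venant inequality for the torsional rigidity. Your outline --- a Cicalese--Leonardi type selection principle producing regular quasi-minimizers that converge to the ball in $C^{1,\alpha}$, followed by a Fuglede-type second-variation expansion of the torsion deficit in which the volume and barycenter constraints eliminate the $k=0,1$ Fourier modes --- is precisely the strategy used in that reference, so your sketch is correct and consistent with the source being cited.
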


Using the above proposition and the identity \eqref{variationalidentity2}, one can easily show that $\sup_{x\in \partial D}|x| \gtrsim \sqrt{\mathcal{A}(D)} \Omega^{-\frac{1}{2}}$. Therefore Theorem~\ref{smallomega} can be proved if we can show $\mathcal{A}(D)$ is always bounded below by a strict positive constant. In other words, we will aim to prove in the next lemmas that if $\mathcal{A}(D)$ and $\Omega$ are sufficiently small, then $D$ must be a disk.

  In the following lemma, we will estimate the boundedness of rotating patches in a crude way but this will be improved later.

\begin{lemma}\label{claimford2231}
There exist positive constants $\Omega_1$ and $\alpha_1<\frac{1}{2}$ such that if  $\Omega < \Omega_1$ and $\mathcal{A}(D)<\alpha_1$, then
\begin{align}
D \subset B_2(x_0), \label{claimford1}\\
|x_0| \le 4 \mathcal{A}(D), \label{claimford2}
\end{align}
where $x_0$ is a point such that $\frac{\left| D \triangle B(x_0)\right|}{\pi} = \mathcal{A}(D)$.
\end{lemma}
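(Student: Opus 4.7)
The plan is to establish the containment $D \subset B_2(x_0)$ by contradiction using the rotating patch equation \eqref{rotatingpatch}, after which the bound $|x_0| \le 4\mathcal{A}(D)$ will follow from a direct moment comparison. A preliminary ingredient is that any uniformly rotating patch with $\Omega \ne 0$ has its center of vorticity at the origin, namely $\int_D y\, dy = 0$: this follows from conservation of linear momentum for the 2D Euler equation applied to the rigid rotation $D_t = R_{-\Omega t} D_0$, which forces $\int_{D_0} y\, dy = R_{-\Omega t}\int_{D_0} y\, dy$ for all $t$ and hence $\int_{D_0} y\, dy = 0$.

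Suppose for contradiction that $D \not\subset B_2(x_0)$. Since $|D| = |B(x_0)| = \pi$ and $|D \triangle B(x_0)| < \pi/2$, neither set contains the other, so the Jordan curve $\partial D$ crosses $\partial B(x_0)$, producing some $\bar{x}_1 \in \partial D$ with $|\bar{x}_1 - x_0| = 1$; applying the intermediate value theorem to $y \mapsto |y - x_0|$ on the connected set $\partial D$ also yields $\bar{x}_2 \in \partial D$ with $|\bar{x}_2 - x_0| = 2$. Decomposing $1_D * \mathcal{N} = 1_{B(x_0)} * \mathcal{N} + \Phi$ with $\Phi := (1_D - 1_{B(x_0)}) * \mathcal{N}$, and using the explicit formula $(1_{B(x_0)} * \mathcal{N})(y) = \frac{1}{2}\log|y - x_0|$ for $|y - x_0| \ge 1$, the equation $\Psi(\bar{x}_1) = \Psi(\bar{x}_2)$ becomes
\begin{equation*}
\tfrac{1}{2}\log 2 = \Phi(\bar{x}_1) - \Phi(\bar{x}_2) + \tfrac{\Omega}{2}\bigl(|\bar{x}_2|^2 - |\bar{x}_1|^2\bigr).
\end{equation*}
To control the perturbation I split $D \triangle B(x_0)$ at distance $1$ from the segment $L := [\bar{x}_1, \bar{x}_2]$: on the near part, Hardy--Littlewood rearrangement bounds $\int_{D\triangle B(x_0)\cap\{d(\cdot,L)<1\}} |\mathcal{N}(\bar{x}_i - y)|\, dy$ by the analogous integral over a ball around $\bar{x}_i$ of measure $\pi\mathcal{A}(D)$; on the far part, the mean value inequality gives $|\mathcal{N}(\bar{x}_1 - y) - \mathcal{N}(\bar{x}_2 - y)| \le \frac{|\bar{x}_1 - \bar{x}_2|}{2\pi\, d(y,L)} \le \frac{3}{2\pi}$. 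Together these yield $|\Phi(\bar{x}_1) - \Phi(\bar{x}_2)| \le C \mathcal{A}(D)(1 + |\log\mathcal{A}(D)|)$, a bound uniform in $\Omega$. For $\mathcal{A}(D) < \alpha_1$ small enough the right side is at most $\frac{1}{4}\log 2$, so $\frac{\Omega}{2}|\bar{x}_2|^2 \ge \frac{1}{4}\log 2$; since $|\bar{x}_2| \le |x_0| + 2$, this forces
\begin{equation*}
|x_0| \ge \sqrt{(\log 2)/(2\Omega)} - 2.
\end{equation*}

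In the reverse direction, $\int_D y\, dy = 0$ and $\int_{B(x_0)} y\, dy = \pi x_0$ give $\pi|x_0| \le \int_{D \triangle B(x_0)}|y|\, dy$, which by Cauchy--Schwarz and the moment bound $\int_D |y|^2 dy \le \frac{\pi}{2} + \frac{(1-2\Omega)\pi}{4\Omega}$ coming from \eqref{variationalidentity2} and $p \ge 0$ yields $|x_0| \le C'\sqrt{\mathcal{A}(D)/\Omega}$. Combining with the previous lower bound and multiplying by $\sqrt{\Omega}$ gives
\begin{equation*}
\sqrt{(\log 2)/2} - 2\sqrt{\Omega} \le C'\sqrt{\mathcal{A}(D)},
\end{equation*}
which fails once $\Omega_1$ and $\alpha_1$ are chosen sufficiently small, contradicting $D \not\subset B_2(x_0)$. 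Once containment is established, using $|y| \le |x_0| + 2$ on $D$ and $|y| \le |x_0| + 1$ on $B(x_0)$,
\begin{equation*}
\pi|x_0| \le \int_{D\triangle B(x_0)} |y|\, dy \le \tfrac{\pi\mathcal{A}(D)}{2}(3 + 2|x_0|),
\end{equation*}
yielding $|x_0| \le \frac{3\mathcal{A}(D)}{2(1-\mathcal{A}(D))} \le 4\mathcal{A}(D)$ as long as $\mathcal{A}(D) \le 5/8$, ensured by $\alpha_1 < 1/2$. The main technical obstacle is establishing the uniform-in-$\Omega$ bound on $|\Phi(\bar{x}_1) - \Phi(\bar{x}_2)|$ despite the possibility that $D$ may have tails extending far from $B(x_0)$; the cancellation of large-distance contributions at the two boundary points is precisely what the mean value inequality for $\nabla \mathcal{N}$ on the far region provides.
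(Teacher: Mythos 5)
Your proposal is correct and follows the same overall strategy as the paper: use constancy of $\Psi$ on $\partial D$ at a point of $\partial B(x_0)\cap\partial D$ and a point of $\partial B_2(x_0)\cap\partial D$, isolate the radial contribution $\frac{1}{2}\log 2$, and show that both the perturbation term and the $\Omega$-term are too small to account for it once $\Omega$ and $\mathcal{A}(D)$ are small; then deduce \eqref{claimford2} from the containment and the center-of-mass identity. The main technical divergence is in how you bound the perturbation $\Phi(\bar x_1)-\Phi(\bar x_2)$. You use a near--far decomposition relative to the segment $[\bar x_1,\bar x_2]$, with a rearrangement argument near the evaluation points and the mean-value inequality for $\nabla\mathcal{N}$ far away, obtaining $O(\mathcal{A}(D)(1+|\log\mathcal{A}(D)|))$; the paper instead proves the clean interpolation estimate $\|\nabla(f*\mathcal{N})\|_{L^\infty}\le\sqrt{2/\pi}\sqrt{\|f\|_{L^1}\|f\|_{L^\infty}}$ (equation \eqref{errorterm}) and applies the mean value theorem directly to $h=(1_D-1_{B(x_0)})*\mathcal{N}$, obtaining $3\sqrt{2\mathcal{A}(D)}$ in one line; the latter is simpler, gives a better exponent, and sidesteps the caveat that $|\mathcal{N}|$ is not radially decreasing, which makes the Hardy--Littlewood step in your near-field estimate require the extra decomposition of $|\log|\cdot||$ into its decreasing and bounded parts. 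You also derive the center-of-mass fact from impulse conservation rather than citing it, and you package the endgame as a direct comparison of an explicit lower bound $|x_0|\ge\sqrt{(\log 2)/(2\Omega)}-2$ against the upper bound $|x_0|\lesssim\sqrt{\mathcal{A}(D)/\Omega}$ coming from \eqref{variationalidentity2}, whereas the paper substitutes its bound \eqref{equation3} on $|x_0|$ into the single inequality \eqref{equation10}; these are cosmetic differences of presentation. Both routes are valid, and the constants work out in each.
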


\begin{proof}
Let us pick $\Omega_1$ and $\alpha_1$ so that for all $\Omega < \Omega_1$ and $\alpha <\alpha_1<\frac{1}{2}$, it holds that
\begin{align}\label{contradiction1}
 \frac{1}{2}\log2 \ge \Omega \left( 10 + \frac{4\alpha}{\left(1-\sqrt{2\alpha}\right)^2}\left(1+\sqrt{\frac{1}{4\Omega}}\right)^2\right) + 3\sqrt{2\alpha}.
\end{align}
We will first show that if $(D,\Omega)$ satisfies $\Omega< \Omega_1$ and $\mathcal{A}(D) <\alpha_1$, then \eqref{claimford1} holds.

 Note that the center of mass of $D$ is necessarily the origin (\cite[Proposition 3]{hmidi2015rotating}). Therefore we have
\begin{align}\label{equation9}
0 = \frac{1}{\pi}\int x1_D(x)dx = \frac{1}{\pi}\int x\left(1_D(x)-1_{B(x_0)}(x)\right)dx + x_0.
\end{align}
Hence it follows from Cauchy-Schwarz inequality that
\begin{align}\label{equation1}
|x_0| \le \frac{1}{\pi} \int |x| \left|1_D(x)-1_{B(x_0)}(x)\right|dx & \le \frac{1}{\pi}\left( \sqrt{|D\triangle B(x_0)|}\sqrt{\int_{D}|x|^2dx + \int_{B(x_0)}|x|^2dx}\right)\nonumber\\
&  \le \sqrt{\frac{\mathcal{A}(D)}{\pi}}\sqrt{\int_{D}|x|^2dx} + \sqrt{\frac{\mathcal{A}(D)}{\pi}}\sqrt{\int_{B(x_0)}|x|^2dx}.
\end{align} 
Since $\sqrt{\int_{B(x_0)} |x|^2dx} \le \sqrt{\int_{B(x_0)} 2|x-x_0|^2dx + 2\int_{B(x_0)}|x_0|^2dx} = \sqrt{\pi + 2|x_0|^2\pi} \le \sqrt{\pi} + \sqrt{2\pi}|x_0|$, \eqref{equation1} yields that
\begin{align}\label{equation2}
\left(1-\sqrt{2\mathcal{A}(D)}\right)|x_0| < \sqrt{\mathcal{A}(D)} + \sqrt{\frac{\mathcal{A}(D)}{\pi}}\sqrt{\int_{D}|x|^2dx}.
\end{align}
In addition, it follows from \eqref{variationalidentity2} that
\begin{align*}
\Omega\int_{D}|x|^2dx = \frac{|D|^2}{4\pi} - (1-2\Omega)\int_{D}pdx < \frac{\pi}{4},
\end{align*}
where we used $\Omega < \frac{1}{2}$, $p\ge 0$ in $D$ and $|D| = \pi$ to get the last inequality. Plugging this into \eqref{equation2}, we obtain
\begin{align*}
\left(1-\sqrt{2\mathcal{A}(D)}\right)|x_0| < \sqrt{\mathcal{A}(D)}\left(1+\sqrt{\frac{1}{4\Omega}}\right),
\end{align*}
hence,
\begin{align}\label{equation3}
|x_0| < \frac{\sqrt{\mathcal{A}(D)}}{1-\sqrt{2\mathcal{A}(D)}}\left(1+\sqrt{\frac{1}{4\Omega}}\right).
\end{align}
 
 To prove \eqref{claimford1}, let us suppose to the contrary that there exist $x_1 \in \partial B(x_0)\cap \partial D$ and $x_2\in \partial B_2(x_0)\cap \partial D$. Then it follows from \eqref{rotatingpatch} that $0 = \Psi(x_1) - \Psi(x_2)$, therefore
\begin{align}\label{equation8}
1_{B(x_0)}*\mathcal{N}(x_2)-1_{B(x_0)}*\mathcal{N}(x_1) = \Omega(|x_2|^2 - |x_{1}|^2) + h(x_1)-h(x_2),
\end{align}
where $h(x) := \left(1_D - 1_{B(x_0)}\right)*\mathcal{N}.$
For the left-hand side, we use 
\begin{align*}
1_{B(x_0)}*\mathcal{N}=
\begin{cases}
\frac{1}{4}|x-x_0|^2 - \frac{1}{4} & \text{ if }|x-x_0|<1, \\
\frac{1}{2}\log|x-x_0| & \text{ otherwise,}
\end{cases}
\end{align*}
and obtain 
\begin{align}\label{equation7}
1_{B(x_0)}*\mathcal{N}(x_2)-1_{B(x_0)}*\mathcal{N}(x_1)  = \frac{1}{2}\log2.
\end{align}
For $\Omega\left(|x_2|^2-|x_1|^2\right)$  in the right-hand side of \eqref{equation8}, we use the triangular inequality and \eqref{equation3} to obtain
\begin{align}
&|x_1|^2 < 2|x_1 - x_0|^2 + 2|x_0|^2 < 2 +  \frac{2\mathcal{A}(D)}{\left(1-\sqrt{2\mathcal{A}(D)}\right)^2}\left(1+\sqrt{\frac{1}{4\Omega}}\right)^2, \label{equation5}\\
&|x_2|^2 < 2|x_2 - x_0|^2 + 2|x_0|^2 < 8 + \frac{2\mathcal{A}(D)}{\left(1-\sqrt{2\mathcal{A}(D)}\right)^2}\left(1+\sqrt{\frac{1}{4\Omega}}\right)^2\label{equation4}.
\end{align}
To estimate $h(x_1) - h(x_2)$, we use the fact that 
 \begin{align}\label{errorterm}
 \rVert \nabla f*\mathcal{N} \rVert_{L^{\infty}}\le  \sqrt{\frac{2}{\pi}}\sqrt{\rVert f\rVert_{L^1}\rVert f\rVert_{L^\infty}} \quad \text{for any $f\in L^1\cap L^{\infty}(\R^2)$}.
 \end{align}
 Indeed, one can compute with $a:=\sqrt{\frac{\rVert f \rVert _{L^1}(\R^2)}{2\pi\rVert f \rVert_{L^{\infty}(\R^2)}}}$, 
 \begin{align*}
 \left| \nabla f * \mathcal{N} (x) \right| \le \frac{1}{2\pi}\int_{|x-y|> a}\frac{1}{|x-y|}\left |f(y) \right|dy + \frac{1}{2\pi}\int_{|x-y| \le a} \frac{1}{|x-y|}\left| f(y) \right| dy \le \frac{\rVert f \rVert_{L^1(\R^2)}}{2\pi a} + a\rVert f \rVert_{L^{\infty}(\R^2)},
 \end{align*}
 which yields \eqref{errorterm}. Thus we have
 \begin{align}\label{equation6}
 |h(x_1) - h(x_2)| < |\nabla h|_{L^{\infty}}|x_1 - x_2| < \sqrt{\frac{2}{\pi}}\sqrt{|D\triangle B(x_0)|}|x_1 - x_2| = 3\sqrt{2\mathcal{A}(D)}.
 \end{align}
 Hence it follows from \eqref{equation8}, \eqref{equation7}, \eqref{equation5}, \eqref{equation4} and \eqref{equation6}  that
 \begin{align}\label{equation10}
 \frac{1}{2}\log2 < \Omega \left( 10 + \frac{4\mathcal{A}(D)}{\left(1-\sqrt{2\mathcal{A}(D)}\right)^2}\left(1+\sqrt{\frac{1}{4\Omega}}\right)^2\right) + 3\sqrt{2\mathcal{A}(D)},
 \end{align}
which contradicts our choice of $\Omega_1$ and $\alpha_1$ for \eqref{contradiction1}. This proves the claim~\eqref{claimford1}. 

To prove \eqref{claimford2},  let us fix $\Omega_1$ and $\alpha_1 < \frac{1}{2}$ so that the claim~\eqref{claimford1} holds. Then for all uniformly-rotating patches with $\Omega < \Omega_1$ and $\mathcal{A}(D)<\alpha_1$, it follows from \eqref{equation9} that 
\begin{align*}
|x_0| \le \frac{1}{\pi}|D\triangle B(x_0)|  \sup_{x\in D \cup B(x_0)}|x| \le \mathcal{A}(D)\left( |x_0| + 2\right),
\end{align*}
where we used \eqref{claimford1} to get the last inequality. Therefore 
\begin{align*}
|x_0| < \frac{2\mathcal{A}(D)}{1-\mathcal{A}(D)} < 4\mathcal{A}(D),
\end{align*}
where we used $\mathcal{A}(D) \le \alpha_1 <\frac{1}{2}$ for the last inequality. Hence \eqref{claimford2} is proved.
\end{proof}

 Since we are interested in patches that rotate about the origin, let us consider the asymmetry between $D$ and the unit disk centered at the origin:
 \begin{align*}
 \mathcal{A}_0(D) : = \frac{|D\triangle B|}{|D|} = \frac{|D \triangle B|}{\pi}.
 \end{align*}

 Tautologically, it holds that $\mathcal{A}(D) \le \mathcal{A}_0(D)$. For rotating patches, we have the following lemma:

\begin{lemma}\label{comparisonasymmetry}
There exist positive constants $\Omega_1$ and $c_1$ such that if $(D,\Omega)$ is a solution to \eqref{rotatingpatch} with $\Omega < \Omega_1$, then 
\begin{align}\label{comparison}
\mathcal{A}_0(D) \le c_1 \mathcal{A}(D).
\end{align}
\end{lemma}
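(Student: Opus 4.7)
The plan is to split the argument into two regimes depending on the size of $\mathcal{A}(D)$. In the \emph{easy} regime where $\mathcal{A}(D)\ge \alpha_1$, I would use the trivial bound $\mathcal{A}_0(D)\le 2$ (since $|D\triangle B|\le |D|+|B|=2\pi$), which gives $\mathcal{A}_0(D)\le (2/\alpha_1)\,\mathcal{A}(D)$ for free. So the real content is the complementary regime $\mathcal{A}(D)<\alpha_1$, where I intend to invoke Lemma~\ref{claimford2231} directly.

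In that regime, let $x_0$ denote the optimal center realizing $|D\triangle B(x_0)|=\pi\mathcal{A}(D)$. Lemma~\ref{claimford2231} supplies the crucial location bound $|x_0|\le 4\mathcal{A}(D)$, which is the whole point of the previous lemma. Then I apply the triangle inequality for symmetric differences,
\[
|D\triangle B|\le |D\triangle B(x_0)|+|B(x_0)\triangle B|,
\]
so it only remains to control $|B(x_0)\triangle B|$ in terms of $|x_0|$. Since $B$ and $B(x_0)$ are two unit disks whose centers are at distance $|x_0|$ apart, an elementary computation (e.g.\ from the standard formula for the area of intersection of two disks, or by noting that $B(x_0)\triangle B$ is contained in a tubular neighborhood of $\partial B$ of width $|x_0|$) yields $|B(x_0)\triangle B|\le C|x_0|$ for some absolute constant $C$. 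Combining this with $|x_0|\le 4\mathcal{A}(D)$ gives $|D\triangle B|\le (\pi + 4C)\,\mathcal{A}(D)$, i.e.\ $\mathcal{A}_0(D)\le (1+4C/\pi)\,\mathcal{A}(D)$.

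Setting $c_1:=\max\{2/\alpha_1,\,1+4C/\pi\}$ and taking $\Omega_1$ as in Lemma~\ref{claimford2231} would then finish the proof. There is no serious obstacle here: the only thing to be careful about is picking the constant $C$ for the unit-disk comparison in a way that does not depend on $|x_0|$ in the relevant range (which is harmless because $|x_0|\le 4\alpha_1<2$ by the previous lemma), after which the two regimes glue together trivially.
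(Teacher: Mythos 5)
Your proposal is correct and follows essentially the same route as the paper: the same case split on whether $\mathcal{A}(D)$ is bounded below by $\alpha_1$, the same triangle inequality $|D\triangle B|\le |D\triangle B(x_0)|+|B(x_0)\triangle B|$, the same elementary bound $|B(x_0)\triangle B|\lesssim |x_0|$ combined with $|x_0|\le 4\mathcal{A}(D)$ from Lemma~\ref{claimford2231}, and the same final choice $c_1=\max\{2/\alpha_1,\,1+4C/\pi\}$.
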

\begin{proof}
Let $x_0$ be a point in $\R^2$ such that $\frac{|D\cap B(x_0)|}{\pi} = \mathcal{A}(D)$.  By Lemma~\ref{claimford2231}, we can pick $\Omega_1$ and $\alpha_1<\frac{1}{2}$ such that if $\Omega < \Omega_1$ and $\mathcal{A}(D)<\alpha_1$, then
\begin{align}
|x_0| \le 4 \mathcal{A}(D). \label{claimford22}
\end{align}
Let us assume for a moment that the claim is true. If $\mathcal{A}(D) \ge \alpha_1$, then it follows from the definition of $\mathcal{A}_0$ that
\begin{align}\label{comparison111}
\mathcal{A}_0(D) \le \frac{|D| + |B|}{\pi} = 2 \le \frac{2}{\alpha_1}{\mathcal{A}(D)}.
\end{align}
Now let us assume that $\mathcal{A}(D)<\alpha_1$. For a constant $c>0$ such that $|B\triangle B(x_0)| \le c|x_0|,$ we can compute
\begin{align}\label{comparison112}
\mathcal{A}_0(D) = \frac{ |D \triangle B|}{\pi} & \le \frac{|D\triangle B(x_0)| + |B\triangle B(x_0)| }{\pi} \le \mathcal{A}(D) + \frac{c|x_0|}{\pi} \le \left( 1+ \frac{4 c}{\pi}\right)\mathcal{A}(D),
\end{align}
where the last inequality follows from \eqref{claimford22}.  Therefore \eqref{comparison} follows from \eqref{comparison111} and \eqref{comparison112} by choosing $c_1:=\max\left\{\frac{2}{\alpha_1},\left(1+\frac{4c}{\pi}\right)\right\}$.
\end{proof}

 In the next lemma, we will prove that if $\mathcal{A}_0(D)$ is sufficiently small, then $D$ is necessarily star-shaped. 
 \begin{lemma}\label{starshapelem}
 There exist positive constants $\Omega_2$, $\alpha_2$ and $c_2$ such that if $(D,\Omega)$ is a solution to \eqref{rotatingpatch} with $\Omega < \Omega_2$ and $\mathcal{A}_{0}(D)<\alpha_2$, then there exists $u\in C^{1}(\mathbb{T})$ such that
 \begin{align}\label{starshape}
 \partial D = \left\{ (1+u(\theta))(\cos{\theta}, \sin{\theta}) : \theta\in \mathbb{T}\right\},
 \end{align}
and
 \begin{align}\label{logbound}
 \rVert u\rVert_{L^\infty} \le c_2 \mathcal{A}_0(D)\left| \log\mathcal{A}_0(D) \right|.
 \end{align}
 \end{lemma}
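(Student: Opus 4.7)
My plan is to exploit the boundary condition $\Psi=\text{const}$ on $\partial D$ together with the splitting $1_D\ast\mathcal{N}=1_B\ast\mathcal{N}+h$, where $h:=(1_D-1_B)\ast\mathcal{N}$. Since $1_B\ast\mathcal{N}$ is explicit and radial (equal to $(|x|^2-1)/4$ for $|x|\le 1$ and $(1/2)\log|x|$ for $|x|\ge 1$), comparing $\Psi$ at two boundary points will directly give an equation relating their radii, modulo an error involving $h$. The crucial ingredient will be an $L^\infty$ estimate on $h$ of order $\mathcal{A}_0(D)|\log\mathcal{A}_0(D)|$, sharper than the $\sqrt{\mathcal{A}_0(D)}$ that one would get from \eqref{errorterm}.

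First I would prove
\[
\|h\|_{L^\infty(\R^2)}\le C\,\mathcal{A}_0(D)\,|\log\mathcal{A}_0(D)|.
\]
Bounding $|h(x)|\le \frac{1}{2\pi}\int_{D\triangle B}|\log|x-z||\,dz$ and splitting the region of integration into $|x-z|\le 1$ and $|x-z|>1$, the far part contributes only $O(\mathcal{A}_0)$ because Lemma~\ref{claimford2231} confines $D\cup B$ to a fixed ball so that $|\log|x-z||$ is uniformly bounded. The near part is handled by the symmetric-decreasing rearrangement of $1_{D\triangle B}$ against the radial decreasing function $-\log|\cdot|$: one compares with $\int_{B_{\sqrt{\mathcal{A}_0}}(x)}-\log|x-z|\,dz$, which is computed explicitly to be of order $\mathcal{A}_0|\log\mathcal{A}_0|$.

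Next, since $D\ne B$ and $|D|=|B|=\pi$, a short topological argument on the Jordan curve $\partial D$ shows $\partial D\cap\partial B\ne\emptyset$ once $\alpha_2$ is small enough: if $\partial D\subset\{|x|<1\}$ then $D\subset B$ and $|D|<\pi$, while if $\partial D\subset\{|x|>1\}$ then either $D\supset\overline B$ (so $|D|>\pi$) or $D\cap B=\emptyset$ (so $\mathcal{A}_0=2$), all contradictions. Pick such $y_0\in\partial D$ with $|y_0|=1$, and let $x^\ast,y^\ast\in\partial D$ maximize/minimize $|x|$, so that $R:=|x^\ast|>1>r:=|y^\ast|$. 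Writing $\Psi(x^\ast)=\Psi(y_0)$ and $\Psi(y^\ast)=\Psi(y_0)$ yields
\[
\tfrac{1}{2}\log R-\tfrac{\Omega}{2}(R^2-1)=h(y_0)-h(x^\ast),\qquad \tfrac{1}{4}(1-r^2)(1-2\Omega)=h(y^\ast)-h(y_0).
\]
Since Lemma~\ref{claimford2231} gives $R\le 3$ and we are free to take $\Omega<1/4$, the left-hand sides are bounded below by $c(R-1)$ and $c(1-r)$ respectively. Combining with Step 1 gives $\max(R-1,\,1-r)\le C\,\mathcal{A}_0(D)|\log\mathcal{A}_0(D)|$, so $\partial D$ lies in a thin annulus around $\partial B$.

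Finally, to parametrize $\partial D$, I would verify that $\nabla\Psi\cdot\hat{x}>0$ on $\partial D$. Since $\nabla(1_B\ast\mathcal{N})(x)\cdot\hat{x}$ equals $|x|/2$ inside $B$ and $1/(2|x|)$ outside, it is bounded below by a fixed positive constant on the annulus just obtained; the gradient error $\|\nabla h\|_\infty\lesssim\sqrt{\mathcal{A}_0}$ from \eqref{errorterm} together with the term $\Omega|x|$ become negligible once $\Omega_2,\alpha_2$ are chosen small. Star-shapedness follows, and the implicit function theorem applied to $\Psi(r(\theta)(\cos\theta,\sin\theta))=\text{const}$ produces a $C^1$ (indeed $C^2$) polar graph $r(\theta)=1+u(\theta)$; the bound $\|u\|_\infty\le\max(R-1,1-r)$ then gives \eqref{logbound}. \emph{The main obstacle is the logarithmic improvement over $\sqrt{\mathcal{A}_0}$}: without the rearrangement-based $L^\infty$ bound on $h$ itself, only $\|u\|_\infty\lesssim\sqrt{\mathcal{A}_0}$ would follow, which does not match the claimed estimate.
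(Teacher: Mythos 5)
Your proposal takes essentially the same route as the paper: decompose the stream function as the radial piece coming from $1_B*\mathcal{N}$ plus $h=(1_D-1_B)*\mathcal{N}$, prove the logarithmic bound $\|h\|_{L^\infty}\lesssim \mathcal{A}_0|\log \mathcal{A}_0|$ via near/far splitting and rearrangement (this is precisely the paper's claim \eqref{claimford5}), and combine radial monotonicity of the radial part with a level-set comparison to trap $\partial D$ in a thin annulus, then invoke the implicit function theorem. One small quantitative slip worth noting: $\Omega<1/4$ is \emph{not} small enough for $\tfrac12\log R-\tfrac{\Omega}{2}(R^2-1)\gtrsim R-1$ on $(1,3]$ (take $R$ near $3$ and $\Omega$ near $1/4$ to make the left side negative), so $\Omega_2$ must be taken smaller than you state; since the lemma lets you shrink $\Omega_2$ at will this is harmless, but it would break if written down literally.
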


\begin{proof} Without loss of generality, we assume that $D \ne B$.
The key observation is that if $\Omega$ and $\mathcal{A}_0(D)$ are sufficiently small, then the radial derivative of the relative stream function $\Psi$ is strictly positive near $\partial B$, while $\partial D$ is a connected level set of $\Psi$.

 To prove the lemma, let us consider the following decomposition of $\Psi$:
 \begin{align*}
 \Psi(x) := 1_D * \mathcal{N} -\frac{\Omega}{2}|x|^2 = \underbrace{1_B * \mathcal{N} -\frac{\Omega}{2}|x|^2}_{=:\Psi^{rad}(x)} + \underbrace{(1_D - 1_B)*\mathcal{N}(x)}_{=:\Psi^{e}(x)}.
 \end{align*}
 We claim that there exist positive constants $\Omega_1$ and $\alpha_1$ such that if $(D,\Omega)$ is a solution to \eqref{rotatingpatch} with $\Omega<\Omega_1$ and $\mathcal{A}_0(D)<\alpha_1$, then it holds for some $c$, $C>0$ that 
\begin{align}
 \partial_{r} \Psi^{rad}(x) > c \text{ for }  |x|\in \left(\frac{7}{8},\frac{9}{8}\right) \label{claimford4}\\
  |\Psi^e(x)| < C\mathcal{A}_0(D)\left| \log{\mathcal{A}_0(D)} \right| \text{ for } x\in \R^2. \label{claimford5}
\end{align}
Let us assume for a moment that \eqref{claimford4} and \eqref{claimford5} are true. Then we set 
\begin{align*}
\Omega_2:= \Omega_1 \quad \text{ and }\quad \alpha_2:=\left\{ \alpha_1, \alpha^*\right\},
\end{align*}
 where $\alpha^* = \min\left\{ \alpha > 0 : \frac{C}{c}\alpha\log\frac{1}{\alpha} = \frac{1}{16}\right\}$. If $\Omega < \Omega_2$ and $\mathcal{A}_0(D)<\alpha_2$, then  for any $x_1$ and $x_2$ such that 
\begin{align*}
x_1\in \partial D \cap \partial B, \quad \text{ and }\quad  |x_2| = 1- \frac{2C}{c}\mathcal{A}_0(D)\left| \log{\mathcal{A}_0(D)} \right|> \frac{7}{8},
\end{align*}
 we have
\begin{align}
\Psi(x_1) - \Psi(x_2) & = \left(\Psi^{rad}(x_1) - \Psi^{rad}(x_2)\right) + \left(\Psi^e(x_1) - \Psi^{e}(x_2)\right)\nonumber \\
& > c \left(|x_1| - |x_2|\right) + \left(\Psi^e(x_2) - \Psi^{e}(x_1)\right) \nonumber\\
 & > 0,\label{levelsets}
\end{align}
where the first and the second inequalities follow from \eqref{claimford4} and \eqref{claimford5} respectively. In the same way, one can easily show that for any $x_3$ such that $|x_3| = 1+ \frac{2C}{c}\mathcal{A}_0(D)\log\frac{1}{\mathcal{A}_0(D)} < \frac{9}{8}$, we have $\Psi(x_3)-\Psi(x_1) > 0$. Since $\partial D$ is a connected level set of $\Psi$ and $\partial B \cap \partial D \ne \emptyset$,  we get
\begin{align}\label{levelsets2}
\partial D \subset \left\{ x \in \R^2 : 1- \frac{2C}{c}\mathcal{A}_0(D)\log\frac{1}{\mathcal{A}_0(D)}  < |x| < 1+\frac{2C}{c} \mathcal{A}_0(D)\log\frac{1}{\mathcal{A}_0(D)} \right\}.
\end{align}
Hence  the implicit function theorem with \eqref{claimford4} and \eqref{levelsets2} yields that there exists $u\in C^{1}(\mathbb{T})$ such that \eqref{starshape} holds. Furthermore,  \eqref{levelsets2} immediately implies \eqref{logbound}.

To complete the proof, we need to prove the claims. To prove \eqref{claimford4}, note that $\partial_r \Psi^{rad}(r)$ is explicit and given by
\begin{align*}
\partial_r \Psi^{r}(r) = 
\begin{cases}
\left( \frac{1}{2}-\Omega\right) r & \text{ if }r \le 1\\
\frac{1}{2r} - \Omega r & \text{ if }r > 1.
\end{cases}
\end{align*}
Then \eqref{claimford4} follows immediately by choosing sufficiently small $\Omega_1$. For \eqref{claimford5}, note that  Lemma~\ref{claimford2231}  implies that we can choose $\Omega_1$ and $\alpha_1$ so that $ D \subset  B_3$. Then we have for any $x\in \R^2$ that
\begin{align*}
|\Psi^e(x)| & = \bigg| \int_{y\in B_3}  \left(1_D (y)- 1_B(y)\right) \log|x-y|  dy \bigg| \\
&\lesssim \bigg| \int_{y\in B_3, |x-y| < 10}   \left(1_D (y)- 1_B(y)\right) \log|x-y|  dy \bigg| +  \bigg| \int_{y\in B_3, |x-y| > 10}   \left(1_D (y)- 1_B(y)\right) \log|x-y|  dy \bigg| \\
&\lesssim \int_{y\in B_3, |x-y| < 10} |  1_D (y)- 1_B(y) | \bigg|\log|x-y| \bigg| dy +\bigg| \int_{y\in B_3, |x-y| > 10}   \left(1_D (y)- 1_B(y)\right) \frac{\log|x-y|}{\log x}  dy \bigg|\\
& \lesssim |D\triangle B | \bigg|\log {|D\triangle B|}\bigg| \\
& \lesssim \mathcal{A}_0(D)\bigg|\log{\mathcal{A}_0(D)}\bigg|,
\end{align*}
where we used $\int 1_D(y) - 1_B(y) dx = 0$ to get the second inequality. This proves \eqref{claimford5}. 
\end{proof}

The proof of the following proposition will be postponed to the next subsection. 
 \begin{prop}\label{boundomega}
 There exist  positive constants $\Omega_3$ and $\alpha_3$ such that if $ \left( D, \Omega \right)$ is a solution to \eqref{rotatingpatch} with $\Omega < \Omega_3$ and $D$ is a star-shaped domain with $\mathcal{A}_0(D) \le \alpha_3$, then $D = B$.
\end{prop}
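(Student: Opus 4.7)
The plan is to combine the two identities \eqref{variationalidentity1} and \eqref{variationalidentity2} into matching quantitative bounds on the deviation function $u$ that parametrizes $\partial D$, and force a contradiction unless $u\equiv 0$. After shrinking $\alpha_3\le\alpha_2$ and $\Omega_3\le\Omega_2$, Lemma~\ref{starshapelem} supplies $u\in C^1(\mathbb{T})$ with $\partial D=\{(1+u(\theta))(\cos\theta,\sin\theta):\theta\in\mathbb{T}\}$ and $\|u\|_{L^\infty(\mathbb{T})}$ arbitrarily small. The area constraint $|D|=\pi$ then reads $\int_\mathbb{T} u\,d\theta = -\tfrac12\int_\mathbb{T} u^2\,d\theta$, and polar integration gives, for $\|u\|_{L^\infty}\le\tfrac12$,
\begin{align*}
 \int_D|x|^2\,dx-\frac{|D|^2}{2\pi}\sim \|u\|_{L^2(\mathbb{T})}^2, \qquad |D\triangle B|\sim \|u\|_{L^1(\mathbb{T})},
\end{align*}
with constants independent of $u$.

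The first pincer is to feed identity \eqref{variationalidentity1} into the transport-map bound \eqref{boundforrhs} (to be established separately in Proposition~\ref{l1andl2bound} via Loeper's Wasserstein estimate and an explicit measure-preserving map). Using $\|f\|_{L^\infty}\lesssim \|u\|_{L^1}$, this produces, for any $a\in(2\|u\|_{L^\infty},1)$,
\begin{align*}
 \Bigl(\tfrac12-\Omega\Bigr)\|u\|_{L^2}^2 \lesssim a\,\|u\|_{L^2}^2 + \tfrac{1}{a}\|u\|_{L^1}^2.
\end{align*}
Fixing $a$ at a small universal value, shrinking $\alpha_3$ so that $2\|u\|_{L^\infty}<a$ (via the $L^\infty$ estimate in Lemma~\ref{starshapelem}), and requiring $\Omega_3<\tfrac14$, the first right-hand term is absorbed on the left and one concludes $\|u\|_{L^2}^2\le C_1\|u\|_{L^1}^2$.

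The second pincer uses \eqref{variationalidentity2}: its left side is $\le 2\Omega\|u\|_{L^2}^2$ while Proposition~\ref{alphasquared} bounds its right side from below by $\tfrac{\sigma}{2}\mathcal{A}(D)^2$ (for $\Omega<\tfrac14$). Combining these, then invoking Lemma~\ref{comparisonasymmetry} to pass from $\mathcal{A}(D)$ to $\mathcal{A}_0(D)=|D\triangle B|/\pi$ and using the comparison $|D\triangle B|\sim\|u\|_{L^1}$ from above, one arrives at $\|u\|_{L^1}^2\le C_2\,\Omega\,\|u\|_{L^2}^2$.

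Chaining the two inequalities yields $\|u\|_{L^2}^2\le C_1C_2\,\Omega\,\|u\|_{L^2}^2$, so the choice $\Omega_3<(C_1C_2)^{-1}$ forces $\|u\|_{L^2}=0$, that is $D=B$. The genuine technical obstacle lies not in this assembly but in the transport-map bound \eqref{boundforrhs}, which requires constructing an explicit measure-preserving map between $B$ and $D$ whose displacement energy splits cleanly into the $a\|u\|_{L^2}^2$ and $a^{-1}\|f\|_{L^2}^2$ pieces; once that estimate is available, the present proposition reduces to the algebraic combination of the two variational identities with the Fraenkel-type inequality of Proposition~\ref{alphasquared} and the asymmetry comparison of Lemma~\ref{comparisonasymmetry}.
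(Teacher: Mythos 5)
Your proposal is correct and follows the same route as the paper: pass to the polar graph $u$ via Lemma~\ref{starshapelem}, use identity \eqref{variationalidentity1} together with the transport-map bound of Proposition~\ref{l1andl2bound} (with $a$ small and universal, then shrinking $\alpha_3$ so $2\|u\|_{L^\infty}<a$) to absorb the $a\|u\|_{L^2}^2$ term and obtain $\|u\|_{L^2}^2\lesssim\|u\|_{L^1}^2$, and then use \eqref{variationalidentity2} with Proposition~\ref{alphasquared} and Lemma~\ref{comparisonasymmetry} to conclude $\|u\|_{L^1}^2\lesssim\Omega\|u\|_{L^2}^2$. The paper phrases the final contradiction in terms of $\mathcal{A}_0(D)^2$ rather than chaining $\|u\|_{L^2}^2\le C_1C_2\Omega\|u\|_{L^2}^2$ directly, but since $\mathcal{A}_0(D)\sim\|u\|_{L^1}$ this is an algebraically equivalent arrangement of the same two pincers.
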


Now we are ready to prove Theorem~\ref{smallomega}.

 \begin{proofthm}{smallomega} 
 We will choose $\Omega_0$ and $\alpha_0$ so small that all the previous lemmas are applicable. Let us set $\Omega_0:=\min\left\{ \Omega_1, \Omega_2,\Omega_3,\frac{1}{4}\right\}$ and  $\alpha_0 : = \min\left\{ \frac{\alpha_2}{c_1}, \frac{\alpha_3}{c_1}  \right\}$, where $\alpha_i's$ and $c_1$ are as in Lemma~\ref{comparisonasymmetry}, ~\ref{starshapelem} and Proposition~\ref{boundomega}. Moreover,  let $\sigma$ be as described in Proposition~\ref{alphasquared}. We assume that $(D,\Omega)$ is a solution to \eqref{rotatingpatch} with $\Omega <\Omega_0$ and $D \ne B$. Then we will prove
\begin{align}\label{equation50}
\sqrt{\frac{\sigma}{2\pi}}\alpha_0 \Omega^{-\frac{1}{2}} < \sup_{x\in \partial D}|x|.
\end{align}

  Since $D\ne B$, we have $\mathcal{A}(D)\ge \alpha_0$. Indeed, if $\mathcal{A}(D)<\alpha_0$, then Lemma~\ref{comparisonasymmetry} and Lemma~\ref{starshapelem} imply that $D$ is star-shaped and $\mathcal{A}_0(D) < c_1\alpha_0 <\alpha_3$. Therefore, Proposition~\ref{boundomega} yields that $D=B$, which is a contradiction. Thus it follows from  \eqref{variationalidentity2} and \eqref{alphasquared1} that
\begin{align*}
\Omega \int_D |x|^2 dx \ge \left( 1-2\Omega\right) \left(\frac{|D|^2}{4\pi} - \int_D pdx\right) > \frac{1}{2}\sigma\alpha_0^2,
\end{align*}
where we used $\Omega < \frac{1}{4}$.
It is clear that $\Omega\int_D |x|^2dx \le \pi\Omega\left(\sup_{x\in \partial D}|x|\right)^2$, hence the above inequality yields \eqref{equation50}.
  \end{proofthm}

 \subsection{Proof of Proposition~\ref{boundomega}}\label{proofofboundomega}
In this subsection, we aim to prove Proposition~\ref{boundomega}. We say a simply-connected bounded domain is star-shaped if there exist $u:\mathbb{T} \mapsto (-1,\infty)$ such that

\begin{align*}
\partial D = \left\{ (1+u(\theta))(\cos{\theta},\sin{\theta})\in \R^2 : \theta\in \mathbb{T} \right\}.
\end{align*}
 If $|D| = \pi$, we have that
 \begin{align*}
 \pi = \int_{\R^2} 1_D(x)dx = \int_{\mathbb{T}}\int_0^{(1+u(\theta))}rdrd\theta = \pi+ \frac{1}{2}\int_{\mathbb{T}} u(\theta)^2 + 2u(\theta)d\theta,
 \end{align*}
 thus
 \begin{align}\label{l1equall2}
 \int_{\mathbb{T}}u(\theta)^2 d\theta = - \int_{\mathbb{T}}2u(\theta) d\theta.
 \end{align}
 Furthermore $\mathcal{A}_0(D)$ and the difference of second moments of $1_Ddx$ and $1_Bdx$ can be written in terms of $u$ as
   \begin{align}
   &\mathcal{A}_0(D)  = \frac{1}{\pi}\int_{\mathbb{T}}  | u(\theta) | +sgn(u(\theta))\frac{u(\theta)^2}{2}d\theta, \label{asymmetryinu}\\
   & \int_D {|x|^2}dx - \frac{|D|^2}{2\pi} =\int_{\mathbb{T}}|u(\theta)|^2 + u(\theta)^3 +\frac{1}{4}u(\theta)^4 d\theta, \label{secondmoment}
   \end{align}
   where 
   \begin{align*}
   sgn(x) = 
   \begin{cases}
   1 & \text{ if }x\ge0\\
   -1 & \text{ otherwise}.
   \end{cases}
   \end{align*}
   Note that if $\rVert u\rVert_{L^{\infty}(\mathbb{T})} < \frac{1}{2}$, then \eqref{asymmetryinu} and \eqref{secondmoment} imply that there exists $c_3>0$ such that
   \begin{align}
   &\frac{1}{c_3}\int_{\mathbb{T}} |u(\theta)|d\theta \le \mathcal{A}_0(D) \le c_3 \int_{\mathbb{T}} |u(\theta)|d\theta, \label{asymmetryinu2}\\
   &\frac{1}{c_3}\int_{\mathbb{T}}|u(\theta)|^2d\theta \le \int_D \frac{|x|^2}{2}dx - \frac{|D|^2}{4\pi} \le c_3\int_{\mathbb{T}}|u(\theta)|^2d\theta. \label{secondmoment2}
   \end{align}
   The proof of Proposition~\ref{boundomega} is based on the identity  \eqref{variationalidentity1}. We will estimate the right-hand side of \eqref{variationalidentity1} in the following proposition.

 \begin{prop}\label{l1andl2bound}
 Let $D$ be a star-shaped domain parametrized by $u:\mathbb{T} \mapsto \R$ with $\rVert u\rVert_{L^\infty}<\frac{1}{2}$.
 Then there exists $\delta>0$ such that for any $a\in \left( 2 \rVert u \rVert _{L^{\infty}(\mathbb{T})} , 1 \right)$,  it holds that
 
 \begin{align}\label{l1andl2bound3}
\int_D |x - 2\nabla \left( 1_D * \mathcal{N} \right) |^2 dx \le \delta \left( a \int_{\mathbb{T}}|u|^2d\theta+\frac{1}{a}\int_{\mathbb{T}}f(\theta)^2d\theta \right),
 \end{align}
 where $f(\theta) := \int_0^{\theta} u(s)^2+2u(s)ds$. 
 \end{prop}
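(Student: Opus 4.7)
By Proposition~\ref{estimate}, the left-hand side of \eqref{l1andl2bound3} is controlled by the squared $2$-Wasserstein distance $W_2^2(1_D\,dx,1_B\,dx)$, which in turn is at most $\int_D|T(x)-x|^2\,dx$ for any measure-preserving map $T:D\to B$. The problem thus reduces to constructing such a $T$, depending on the parameter $a$, whose squared transportation cost is bounded by the right-hand side of \eqref{l1andl2bound3}.

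\textbf{Construction of $T$.} Set $r_0:=1-a/2$. Since $a>2\|u\|_{L^\infty(\mathbb{T})}$, we have $r_0<1+u(\theta)$ for every $\theta$, so $B_{r_0}\subset D$ and the boundary $\partial D$ is confined to the annulus $\{r_0\le r\le 1+a/2\}$. Let $T$ be the identity on $B_{r_0}$, and on the annular region $\{r_0\le r\le 1+u(\theta)\}$ define it in polar coordinates by
\[
T(r,\theta):=(R(r,\theta),\theta+g(\theta)),\qquad R(r,\theta)^2:=r_0^2+\frac{r^2-r_0^2}{1+g'(\theta)}.
\]
The requirement $R(1+u(\theta),\theta)=1$ together with measure preservation of the polar volume $r\,dr\,d\theta$ (which in this ansatz reads $R\,R_r(1+g'(\theta))=r$) uniquely determines
\[
g'(\theta)=\frac{u(\theta)^2+2u(\theta)}{a(1-a/4)},\qquad g(\theta)=\frac{f(\theta)}{a(1-a/4)}.
\]
The constraint \eqref{l1equall2} makes $g$ $2\pi$-periodic, and the hypothesis $a>2\|u\|_{L^\infty(\mathbb{T})}$ forces $1+g'(\theta)>0$ pointwise, so $T$ is a well-defined measure-preserving map from $D$ onto $B$.

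\textbf{Cost estimate.} On the annulus,
\[
|T(r,\theta)-(r,\theta)|^2=(R-r)^2+4Rr\sin^2(g/2)\le (R-r)^2+Rr\,g(\theta)^2.
\]
For the radial piece, observe that $R^2-r^2$ vanishes at $r=r_0$, equals $-u(u+2)$ at $r=1+u(\theta)$, and is monotone in $r$ (because $\partial_r(R^2-r^2)=-2rg'/(1+g')$ has a single sign in $r$). Hence $|R^2-r^2|\le|u|(|u|+2)\lesssim|u(\theta)|$, and dividing by $R+r\ge 2r_0\sim 1$ yields $|R-r|\lesssim|u(\theta)|$ throughout the annulus. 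Since the annular slice at angle $\theta$ has radial thickness $\sim a$,
\[
\int_{r_0}^{1+u(\theta)}(R-r)^2\,r\,dr\lesssim a\,u(\theta)^2.
\]
For the angular piece, $g(\theta)^2\lesssim f(\theta)^2/a^2$ and $Rr\lesssim 1$, and the same radial integration across an annulus of width $\sim a$ produces a factor $\sim a$, yielding $\lesssim f(\theta)^2/a$. Integrating in $\theta$ and adding the two contributions gives \eqref{l1andl2bound3}.

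\textbf{Main obstacle.} The key delicate point is the radial estimate: although $1+g'(\theta)$ may be arbitrarily close to zero when $\|u\|_{L^\infty(\mathbb{T})}$ approaches $a/2$, the monotonicity of $R^2-r^2$ between its boundary values yields $(R-r)^2\lesssim u(\theta)^2$ with a constant independent of how small $1+g'$ is, and the annular thickness $\sim a$ provides precisely the extra factor needed to match the weighted form $a\int u^2+a^{-1}\int f^2$.
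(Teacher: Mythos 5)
Your first step has a genuine gap. Proposition~\ref{estimate} (Loeper) bounds $\rVert \nabla(\mathcal{N}*(1_D - 1_B))\rVert_{L^2(\R^2)}^2$ by the Wasserstein distance, but this is not the left-hand side of \eqref{l1andl2bound3}. The identity $x - 2\nabla(1_D*\mathcal{N})(x) = 2\nabla(\mathcal{N}*(1_B - 1_D))(x)$ holds only for $|x|\le 1$, since $2\nabla(1_B*\mathcal{N})(x)$ equals $x$ there but equals $x/|x|^2$ when $|x|>1$. On $D\setminus B$ there is an extra term $x - x/|x|^2$ which your argument silently drops. The paper handles this by first splitting (via Cauchy--Schwarz)
\begin{align*}
\int_D |x - 2\nabla(1_D*\mathcal{N})|^2\,dx \lesssim \underbrace{\int_{D} |x - 2\nabla(1_B*\mathcal{N})|^2\,dx}_{H_1} + \underbrace{\int_D |\nabla(\mathcal{N}*(1_B-1_D))|^2\,dx}_{H_2},
\end{align*}
estimating $H_1$ directly by an explicit radial computation (giving $\lesssim a\int_{\mathbb{T}}|u|^2\,d\theta$) and only then applying Loeper to $H_2$. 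Without an estimate of $H_1$ your proposed bound does not follow from the Wasserstein reduction alone.

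Everything after that is essentially the paper's argument up to a harmless reparametrization: you take the identity threshold at $r_0 = 1 - a/2$ while the paper uses $r_0 = 1 - a$; substituting $(1+u)^2 - r_0^2$ and $1 - r_0^2$ shows your formula for $R$ and the paper's $T^r$ are the same map with different $r_0$, and both choices satisfy $r_0 < 1 - \rVert u\rVert_{L^\infty}$ under $a > 2\rVert u\rVert_{L^\infty}$. The monotonicity argument for $R^2 - r^2$, the bound $|R-r|\lesssim |u|$ via division by $R+r \gtrsim 1$, the factor $\sim a$ from the annular width, and the angular bound $g(\theta)^2\lesssim f(\theta)^2/a^2$ are all correct. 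So the fix is simply to insert the $H_1/H_2$ decomposition and estimate $H_1$ separately; then your transport-map computation closes the proof.
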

 
The above proposition will play a key role in the proofs of Proposition~\ref{boundomega} and Theorem~\ref{largem2}. In the proof of Proposition~\ref{boundomega}, we simply use  $|f(\theta)| \lesssim \rVert u \rVert_{L^{1}(\mathbb{T})}$, so that the left-hand side can be almost bounded by $L^1$-norm of $u$. Note that if we can choose $a$ small enough, then the proposition, together with \eqref{variationalidentity2} and \eqref{secondmoment2} will give  $\rVert u \rVert_{L^{2}(\mathbb{T})} \lesssim  \rVert u \rVert_{L^{1}(\mathbb{T})}$.

  In section~\ref{section4}, we will use the fact that if $u(\theta)$ is $\frac{2\pi}{m}$ periodic, then $f(\theta)$ is also $\frac{2\pi}{m}$-periodic, which follows from \eqref{l1equall2}. This will be used for the proof of Theorem~\ref{largem2}.

 \begin{proof}
 Using Cauchy-Schwarz inequality, we obtain that
 \begin{align}\label{Hterm}
 \int_D |x - 2\nabla \left( 1_D * \mathcal{N} \right) |^2 dx \lesssim \int_D | x- 2\nabla \left( 1_B * \mathcal{N}\right) |^2dx + \int_D | \nabla \mathcal{N}* \left( 1_B - 1_D \right) |^2dx =: H_1 + H_2
 \end{align} 
 To estimate $H_1$, note that
 \begin{align*}
\nabla \left( 1_B * \mathcal{N} \right) =
\begin{cases}
 \frac{x}{2} & \text{ if }|x|\le 1 \\
\frac{x}{2|x|^2} & \text{ if }|x| > 1.
\end{cases}
\end{align*}
Therefore we can compute
\begin{align*}
\int_D | x- 2\nabla \left( 1_B * \mathcal{N}\right) |^2dx&  = \int_{D\backslash B} \bigg| x - \frac{x}{|x|^2}\bigg|^2dx\\
& = \int_{D\backslash B} |x|^2 - 2 + \frac{1}{|x|^2}dx \\
&  = \int_{\mathbb{T}\cap \left\{ u > 0 \right\}} \int_1^{1+u(\theta)}.\left(r^2-2+\frac{1}{r^2}\right)rdrd\theta
\end{align*}
However, we have that for $u(\theta)>0$, 
\begin{align*}
\int_1^{1+u(\theta)} r^3-2r+\frac{1}{r}dr & = \frac{1}{4}u(\theta)^4+u(\theta)^3+\frac{1}{2}u(\theta)^2-u(\theta)+\log(1+u(\theta))\\
& \le \frac{1}{4}u(\theta)^4 + \frac{4}{3}u(\theta)^3 \\
&\lesssim \rVert u\rVert_{L^{\infty}(\mathbb{T})}|u(\theta)|^2\\
& \lesssim a|u(\theta)|^2,
\end{align*}
where we used $\log(1+x)\le x-\frac{1}{2}x^2+\frac{1}{3}x^3$ for $x \ge 0$ and $ 0\le u(\theta)< \frac{1}{2}$. Hence it follows that 
\begin{align}\label{H1estimate}
H_1 \lesssim a\int_{\mathbb{T}}|u|^2d\theta.
\end{align}

 In order to estimate $H_2$, we recall the following result:
 
 \begin{prop}\cite[Proposition 3.1]{loeper2006uniqueness} \label{estimate}
Let $\rho_1$ and $\rho_2$ be two probability measures on $\R^d$ with $L^{\infty}$ densities with respect to Lebesgue measure. Then

\begin{align*}
\rVert \nabla (\mathcal{N}*(\rho_1-\rho_2))\rVert_{L^{2}(\R^d)}^2 \le \max (\rVert \rho_1\rVert_{L^{\infty}}, \rVert \rho_2\rVert_{L^{\infty}}) W_2^2(\rho_1,\rho_2),
\end{align*}
where $W_2(\rho_1,\rho_2)$ denotes $2$-Wasserstein distance between $\rho_1$ and $\rho_2$ defined by
\begin{align*}
W^2_2(\rho_1,\rho_2) := \inf\left\{\int |T(x) - x|^2 d\rho_1(x) : T_{\#}\rho_1 = \rho_2 \right\}.
\end{align*}
 \end{prop}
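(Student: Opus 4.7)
The plan is to prove the bound by constructing McCann's displacement interpolation between $\rho_1$ and $\rho_2$ and combining a continuity-equation energy identity with a dispersion bound on the interpolated density. By Brenier's theorem (applicable since $\rho_1$ is absolutely continuous), there exists a convex potential $\phi$ such that $T := \nabla \phi$ satisfies $T_{\#}\rho_1 = \rho_2$ and $W_2^2(\rho_1, \rho_2) = \int |T(x) - x|^2 \rho_1(x)\, dx$. Define $T_t(x) := (1-t)x + tT(x)$ and the interpolated density $\mu_t := (T_t)_{\#}\rho_1$ for $t\in[0,1]$, so that $\mu_0 = \rho_1$ and $\mu_1 = \rho_2$. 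Then $(\mu_t, v_t)$ with velocity field determined by $v_t(T_t(x)) = T(x) - x$ solves the continuity equation $\partial_t \mu_t + \nabla\cdot(\mu_t v_t) = 0$ in the distributional sense.

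Next, set $u_t := \mathcal{N} * \mu_t$, so that $\Delta u_t = \mu_t$ and $\Delta \partial_t u_t = -\nabla\cdot(\mu_t v_t)$. Integrating by parts twice,
\[
\rVert \nabla \partial_t u_t \rVert_{L^2}^2 = -\int\partial_t u_t\,\Delta\partial_t u_t\, dx = -\int \nabla \partial_t u_t\cdot\mu_t v_t\, dx,
\]
so by Cauchy--Schwarz and the pointwise bound $\mu_t^2 |v_t|^2 \le \rVert \mu_t\rVert_{L^\infty}\,\mu_t|v_t|^2$,
\[
\rVert \nabla \partial_t u_t\rVert_{L^2}^2 \le \int \mu_t^2 |v_t|^2\, dx \le \rVert \mu_t\rVert_{L^\infty}\int \mu_t |v_t|^2\, dx = \rVert \mu_t\rVert_{L^\infty}\,W_2^2(\rho_1, \rho_2),
\]
where the last equality follows from the change of variables $y = T_t(x)$ together with $v_t(T_t(x)) = T(x) - x$.

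The key remaining step is the uniform bound $\rVert \mu_t\rVert_{L^\infty} \le M := \max(\rVert \rho_1\rVert_{L^\infty}, \rVert \rho_2\rVert_{L^\infty})$ for every $t\in[0,1]$. By the change-of-variables formula, $\mu_t(T_t(x))\det(\nabla T_t(x)) = \rho_1(x)$. Since $\nabla T = \nabla^2\phi$ is symmetric positive semidefinite almost everywhere (Alexandrov's theorem), the Brunn--Minkowski-type concavity of $A \mapsto \det(A)^{1/d}$ on positive semidefinite symmetric matrices yields
\[
\det(\nabla T_t(x))^{1/d} \ge (1-t) + t\det(\nabla T(x))^{1/d}.
\]
Combining with the Monge--Amp\`ere identity $\det(\nabla T(x)) = \rho_1(x)/\rho_2(T(x))$ and rearranging,
\[
\mu_t(T_t(x))^{-1/d} \ge (1-t)\rho_1(x)^{-1/d} + t\,\rho_2(T(x))^{-1/d} \ge M^{-1/d},
\]
so $\mu_t(T_t(x))\le M$ for $\rho_1$-a.e.\ $x$, and hence $\rVert \mu_t\rVert_{L^\infty}\le M$. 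Finally, writing $\nabla(u_1 - u_0) = \int_0^1 \nabla\partial_t u_t\, dt$ and applying Cauchy--Schwarz in $t$,
\[
\rVert \nabla(\mathcal{N}*(\rho_1 - \rho_2))\rVert_{L^2}^2 \le \int_0^1 \rVert\nabla\partial_t u_t\rVert_{L^2}^2\, dt \le M\cdot W_2^2(\rho_1, \rho_2),
\]
which is the claimed bound.

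The main obstacle is making the Monge--Amp\`ere identity and the second-order structure of $\phi$ rigorous, since the optimal transport map need not be classically differentiable; this is handled by a standard approximation argument, mollifying $\rho_1, \rho_2$ to smooth compactly supported densities (for which Caffarelli's regularity yields a smooth $T$), proving the inequality there, and passing to the limit using continuity of $W_2$ with respect to weak convergence of measures with uniformly bounded second moments, together with $L^2_{\mathrm{loc}}$-convergence of $\nabla\mathcal{N}*\rho_i^{(n)}$. The integration by parts in the energy identity is legitimate as soon as $\nabla u_t \in L^2$, which holds whenever $\mu_t \in L^1\cap L^\infty$ has finite second moment (after the approximation, $\mu_t$ is even compactly supported).
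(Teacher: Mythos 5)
The paper does not prove this proposition at all: it is imported verbatim from Loeper \cite[Proposition 3.1]{loeper2006uniqueness}, so there is no internal proof to compare against. Your argument is, in substance, Loeper's original one — Brenier map, displacement interpolation $\mu_t=(T_t)_{\#}\rho_1$ solving the continuity equation, the energy identity giving $\|\nabla\partial_t u_t\|_{L^2}^2\le \|\mu_t\|_{L^\infty}\int\mu_t|v_t|^2\,dx$, the displacement-convexity bound $\|\mu_t\|_{L^\infty}\le\max(\|\rho_1\|_{L^\infty},\|\rho_2\|_{L^\infty})$ via concavity of $\det^{1/d}$, and Jensen in $t$ — and it is correct. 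Two minor remarks on the technical closing step: the Caffarelli-regularity detour is slightly misstated (his theory requires densities bounded away from zero on convex supports, which mollification of arbitrary compactly supported densities does not provide), but it is also unnecessary, since McCann's a.e.\ Monge--Amp\`ere equation for the Brenier map between absolutely continuous measures, with the Alexandrov Hessian of $\phi$, suffices to run the $\det^{1/d}$-concavity argument directly; and the integration by parts is legitimate because $\partial_t\mu_t$ has zero mean, so $\nabla\partial_t u_t$ decays like $|x|^{-d}$ and lies in $L^2(\R^d)$.
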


  Thanks to Proposition~\ref{estimate}, it follows that 
 
 \begin{align}\label{estimate2}
 H_2 = \rVert \nabla (\mathcal{N}*(1_D-1_B))\rVert_{L^2(\R^2)}^2 \le \int_{D}| T(x)-x|^2dx,
 \end{align}
 for any $T : D \mapsto B$ such that 
 \begin{align}\label{relation}
 T_{\#}\left(1_D(x)dx\right)=1_B(x)dx,
 \end{align}
 where $T_{\#}\rho$ denotes the pushforward measure of $\rho$ by $T$.
 Note that in polar coordinates, \eqref{relation} is equivalent to
 \begin{align}\label{relation2}
 T_{\#}\left(1_{\tilde{D}}(r,\theta)rdrd\theta\right) = 1_{\tilde{B}}(r,\theta)rdrd\theta,
 \end{align}
where $\tilde{D}:=\left\{ (r,\theta)\in \left[0,1\right)\times \mathbb{T} : 0\le r < 1+u(\theta)\right\}$ and $\tilde{B} := \left\{ (r,\theta) \in \left[0,1\right)\times \mathbb{T} : 0 \le r < 1 \right\}.$ Hence it suffices to find a transport map $T$ which gives the desired estimate.

 Let us define $T : \tilde{D} \mapsto \tilde{B}$  by, 
\begin{align}\label{transportmap}
T(r,\theta) := \left( T^r(r,\theta), T^{\theta}(\theta)\right) :=
\begin{cases}
 \left( \sqrt{\frac{a(2-a)(r^2-(1+u(\theta))^2)}{(u(\theta)+a)(u(\theta)+2-a)}+1} , \frac{f(\theta)}{a(2-a)}+\theta \right) & \text{ if } r > 1-a\\
 \left( r,\theta \right) & \text{ if } r \le 1-a.
\end{cases}
\end{align}
for $a\in (2 \rVert u\rVert_{L^\infty},1)$, where $f(\theta) : = \int_0^\theta u(\eta)^2+2u(\eta)d\eta$.
 
  \begin{figure}[htbp]
\begin{center}
    \includegraphics[scale=0.6]{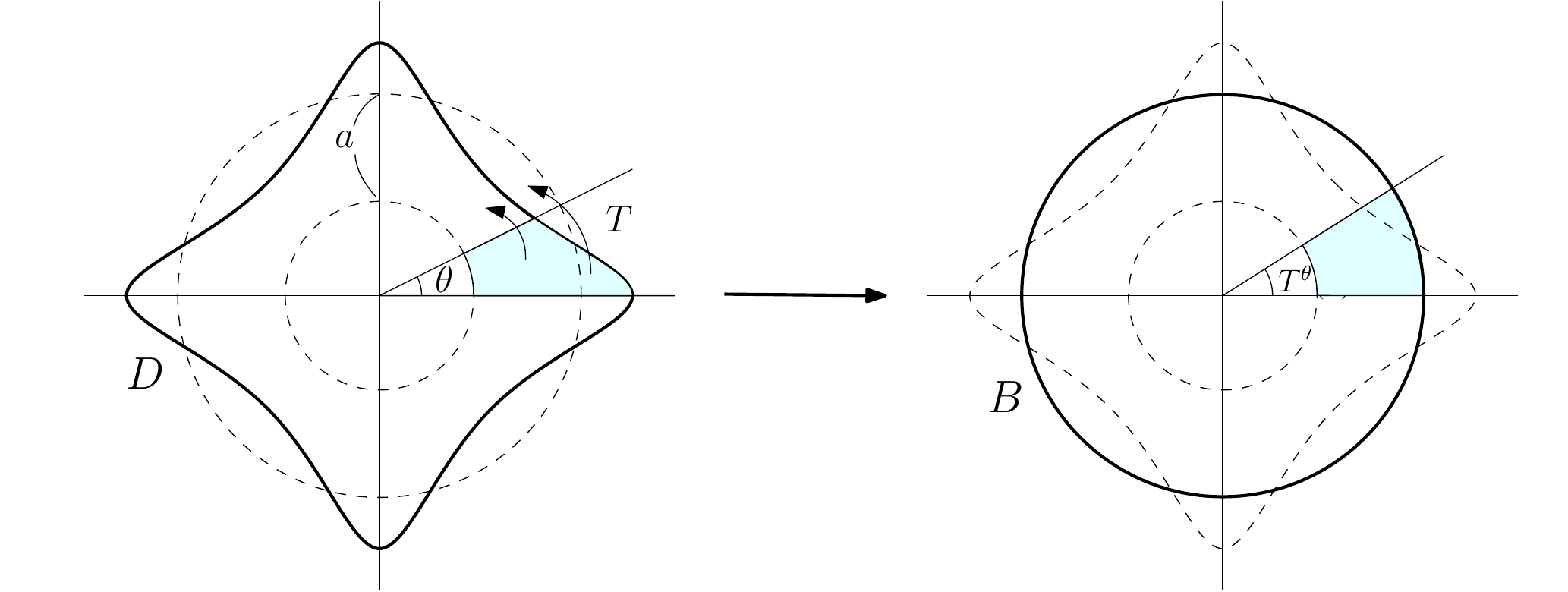}
    \caption{Illustration of the transport map $T$ that pushes forwards $D$ to $B$.}
   \label{transportmap}
    \end{center}

 Our motivation for the transport map $T$ is the following: We first choose $T^{\theta}$ so that $T^{\theta}$ is independent of $r$ and preserves the area in the sense that (see Figure~\ref{transportmap} for the illustration)
\end{figure}

\begin{align*}
\int_{0}^{\theta}\int_{1-a}^{1+u(s)}rdrds = \int_0^{T^{\theta}(\theta)}\int_{1-a}^1 rdrds.
\end{align*}
And then, we choose $T^{r}(r,\theta)$ so that \eqref{relation2} is satisfied. Note that in order to check the condition \eqref{relation2} for $T$, it suffices to show that 
\begin{align}\label{relation3}
1_{\tilde{D}}(r,\theta)r=1_{\tilde{B}}(T(r,\theta))T^r(r,\theta)|\text{det}(\nabla T)|,
\end{align}
almost everywhere with respect to the measure $1_{\tilde{D}}rdrd\theta$  (see \cite{santambrogio2015optimal}).
Then it is clear that $\theta \mapsto T^\theta(\theta)$ and $r\mapsto T^r(r,\theta)$ are increasing for fixed $r$ and $\theta$ respectively. Indeed,
\begin{align*}
\frac{d}{d\theta}T^\theta(\theta) = 1+ \frac{u(\theta)^2+2u(\theta)}{a(2-a)} \ge \frac{2a-a^2+\frac{a^2}{4}-a}{a(2-a)} \ge \frac{a-\frac{3}{4}a^2}{a(2-a)} > 0,
\end{align*}
where the first inequality follows from that $\rVert u \rVert_{L^{\infty}(\mathbb{T})} < \frac{1}{2}a$ and $x\mapsto x^2+2x$ is increasing for $x\ge -1$ thus $u(\theta)^2+2u(\theta) \ge \frac{a^2}{4}-a$.
Since $T$ maps $\left\{ (r,\theta) : r=1-a \text{ or } 1+u(\theta)\right\}$ to $\left\{ (r,\theta) : r=1-a \text{ or }r=1\right\}$ continuously, $T$ is bijective and therefore  $1_{\tilde{D}}(r,\theta)=1_{\tilde{B}}\circ T (r,\theta)$. Furthermore, the Jacobian matrix of $T$ can be computed as
\begin{align*}
\nabla T(r,\theta)= 
\begin{cases}
\begin{pmatrix}
\frac{1}{T^r(r,\theta)}\frac{a(2-a)r}{(u(\theta)+a)(u(\theta)+2-a)} & \partial_\theta T^{r}(r,\theta) \\
0 & \frac{(u(\theta)+a)(u(\theta)+2-a)}{a(2-a)}
\end{pmatrix} & \text{ if } 1-a < r < 1+u(\theta), \\
\begin{pmatrix}
1 & 0 \\
0 & 1
\end{pmatrix} & \text{ otherwise,}
\end{cases}
\end{align*}
therefore
\begin{align*}
T^{r}(r,\theta)|\text{det}(\nabla T)|=r,
\end{align*}
almost everywhere.
This implies that $T$ satisfies \eqref{relation3} and thus \eqref{relation2} holds. Then it follows from \eqref{estimate2} that
\begin{align*}
 H_2 \le \int_{\mathbb{T}}\int_{1-a}^{1+u(\theta)}|T^r(r,\theta) \cos(T^\theta) - r\cos\theta|^2 + |T^r(r,\theta) \sin(T^\theta) - r\sin\theta|^2 rdrd\theta.
\end{align*}
The cosine term in the integrand can be estimated as
\begin{align*}
|T^r(r,\theta)\cos(T^\theta)-r\cos\theta|^2 &= |(T^{r}(r,\theta)-r)\cos(T^\theta(\theta))+r(\cos(T^\theta(\theta))-\cos\theta)|^2\\
& \lesssim |T^r(r,\theta)-r|^2 + |\cos(T^\theta(\theta))-\cos\theta|^2\\
& \lesssim |T^r(r,\theta)-r|^2 + |T^\theta(\theta)-\theta|^2.
\end{align*}
In the same way, the sine term can be bounded as $|T^r\sin(T^\theta)-r\sin\theta|^2 \lesssim |T^r-r|^2 + |T^\theta-\theta|^2$, thus we have

\begin{align}\label{abound}
H_2 \lesssim \int_{\mathbb{T}}\int_{1-a}^{1+u(\theta)}|T^r(r,\theta)-r|^2 rdrd\theta + \int_{\mathbb{T}}\int_{1-a}^{1+u(\theta)}|T^\theta(\theta)-\theta|^2 rdrd\theta =: A_1 + A_2.
\end{align}
$A_2$ is bounded by
 
\begin{align}\label{a2bound}
A_2 \le \int_{\mathbb{T}}\int_{1-a}^{1+u(\theta)}\frac{f(\theta)^2}{a^2}rdrd\theta \lesssim \int_{\mathbb{T}}f(\theta)^2\frac{|u(\theta)|+a}{a^2}\lesssim \frac{1}{a}\int_{\mathbb{T}}f(\theta)^2d\theta,
\end{align}
where we used $\rVert u\rVert_{L^{\infty}(\mathbb{T})} < a$ to get the first and the last inequalities.

For $A_1$, we assume for a momoent that for $r \in (1-a, 1+u(\theta))$,
\begin{align}\label{estimatetr}
| T^{r}(r,\theta)-r | \lesssim | u(\theta) |.
\end{align}

From \eqref{estimatetr}, we obtain
\begin{align}\label{a1bound}
A_1 \lesssim \int_{\mathbb{T}}\int_{1-a}^{1+u(\theta)}|u(\theta)|^2rdrd\theta=\int_{\mathbb{T}}|u(\theta)|^2\int_{1-a}^{1+u(\theta)}rdrd\theta\lesssim a\int_{\mathbb{T}}|u(\theta)|^2d\theta,
\end{align}
where the last inequality follows from $a>\rVert u\rVert_{L^{\infty}(\mathbb{T})}.$
Therefore, it follows from \eqref{abound}, \eqref{a2bound} and \eqref{a1bound} that
\begin{align}\label{H2bound}
H_2 \lesssim a\int_{\mathbb{T}}|u|^2d\theta +\frac{1}{a}\int_\mathbb{T} |f|^2d\theta.
\end{align}
Thus \eqref{l1andl2bound3} follows from \eqref{Hterm}, \eqref{H1estimate} and \eqref{H2bound}. 

 To check \eqref{estimatetr}, let $g(a,r,x):= \frac{\sqrt{\frac{a(2-a)(r^2-(1+x)^2)}{(x+a)(x+2-a)}+1}-r}{x}$ so that $\frac{T^{r}(r,\theta)-r}{u(\theta)} = g(a,r,u(\theta))$. Then it suffices to show that $|g(a,r,x)| \lesssim 1$ in $\left\{(a,r,x) : (1-a)<r<1+x,\ 2|x|<a<1\right\}$. Since $g(a,r,x)$ is continuous everywhere except for $x=0$, we only need to check  $|g(a,r,x)| \lesssim 1$ when $0 < x\ll 1$. Taking the limit, we obtain
\begin{align*}
\lim_{x\to 0^+}g(a,r,x) = \frac{\frac{\partial }{\partial x}\left(\sqrt{\frac{a(2-a)(r^2-(1+x)^2)}{(x+a)(x+2-a)}+1}-r\right)\bigg|_{x=0}}{1} = \frac{(1-r^2)-a(2-a)}{ra(2-a)}.
\end{align*}
If $r<\frac{1}{2}$, then $a>\frac{1}{2}$ therefore it follows from $r>1-a>0$ that
\begin{align*}
\lim_{x\to 0}|g(a,r,x)| = \frac{|r^2-(a-1)^2|}{ra(2-a)}\le \frac{r}{a(2-a)}+\frac{(a-1)^2}{ra(2-a)} \le \frac{2r}{a(2-a)}\lesssim 1,
\end{align*}
where the second inequality follows from $(1-a) < r$.
If $r>\frac{1}{2}$, then it follows from $|r-1| < a$ that
\begin{align*}
\lim_{x\to 0}|g(a,r,x)| \le \frac{|1-r^2|}{ra(2-a)} + \frac{(2-a)}{r(2-a)} < \frac{1+r}{r(2-a)} + \frac{2-a}{r(2-a)} \lesssim 1.
\end{align*}
This proves \eqref{estimatetr} and finishes the proof.
 \end{proof}

Now we are ready to prove Proposition~\ref{boundomega}.

\begin{proofprop}{boundomega}
We will fix $\Omega_3$ and $\alpha_3$ so small that all the lemmas are applicable. To do so, let us denote $h(x):=-x\log x$. Also we denote by $\alpha^*>0$ the smallest positive number such that 
\begin{align}\label{alphastar}
\left(h(\alpha^*)\right)^3 > (\alpha^*)^2.
\end{align}
Furthermore, let $\Omega_i's$, $\alpha_i's$ and $c_i's$ for $i=1,2$ be as in Lemma~\ref{comparisonasymmetry} and Lemma~\ref{starshapelem}, let $c_3$ be as in \eqref{asymmetryinu2} and \eqref{secondmoment2} and let $\delta$ be as in Proposition~\ref{l1andl2bound}. Lastly, let $c_4:=  18\pi c_3^3\delta^2$. Then let us fix
\begin{align}\label{parameters3}
\Omega_3:=\min\left\{\Omega_1, \Omega_2,\frac{1}{4}, \frac{\sigma}{16c_1^2c_3^2c_4} \right\} \quad and  \quad \alpha_3 :=\left\{\alpha^*,\alpha_1,\alpha_2,\left(\frac{1}{2c_2}\right)^{\frac{3}{2}}, \left(\frac{1}{4c_2c_3\delta}\right)^{\frac{3}{2}}\right\}.
\end{align}
 Then our goal is to show that if $(D,\Omega)$ is a solution to \eqref{rotatingpatch} with $\Omega < \Omega_3$ and $\mathcal{A}_{0}(D) < \alpha_3$, then $D=B$.

\textbf{Step 1.} Let us claim that
\begin{align}
& \mathcal{A}_0(D) \le c_1 \mathcal{A}(D). \label{asymcosmparison}\\
&\rVert u \rVert_{L^{\infty}(\mathbb{T})} \le c_2 \mathcal{A}_0(D)^{\frac{2}{3}} \le \frac{1}{2}.\label{ulinfty}
\end{align}
Since $\Omega_3< \Omega_1$ and $\mathcal{A}(D) \le \mathcal{A}_0(D)< \alpha_3 \le \alpha_1$, it follows from Lemma~\ref{comparisonasymmetry} that $\mathcal{A}_0(D) < c_1 \mathcal{A}(D)$. In addition, $\Omega_3 < \Omega_2$, $\mathcal{A}_0(D) < \alpha_3\le \alpha_2$ and Lemma~\ref{starshapelem} imply that
\begin{align*}
\rVert u\rVert_{L^{\infty}(\mathbb{T})} \le c_2h(\mathcal{A}_0(D)) \le c_2 \mathcal{A}_0(D)^{\frac{2}{3}},
\end{align*}
where the last inequality follows from $\alpha_3\le \alpha^*$. Since $\mathcal{A}_0(D)<\alpha_3\le \left(\frac{1}{2c_2}\right)^{\frac{3}{2}}$, we have $c_2\mathcal{A}_0(D)^{\frac{2}{3}} \le \frac{1}{2}$, which proves \eqref{ulinfty}.

\textbf{Step 2.} In this step, we will show that
\begin{align}\label{step2claim}
\frac{1}{2}\int_D |x - 2\nabla \left( 1_D * \mathcal{N} \right) |^2 dx \le \frac{1}{4c_3}\int_{\mathbb{T}} |u|^2d\theta + c_4\mathcal{A}_0(D)^2,
\end{align}
where $c_4:= 18\pi c_3^3\delta^2$.
Since $\rVert u \rVert_{L^{\infty}(\mathbb(T))} < \frac{1}{2}$, we will apply Proposition~\ref{l1andl2bound} with $a:= \frac{1}{2c_3\delta}$. Note that 
\begin{align*}
2\rVert u \rVert_{L^{\infty}(\mathbb{T})} \le 2c_2\mathcal{A}_0(D)^{\frac{2}{3}} <2c_2\alpha_3^{\frac{2}{3}} \le a, 
\end{align*}
where the first inequality follows from \eqref{ulinfty}, the second follows from the assumption that $\mathcal{A}_0(D)<\alpha_3$ and the last inequality follows from \eqref{parameters3}, which says $\alpha_3\le \left(\frac{1}{4c_2c_3\delta}\right)^{\frac{3}{2}}$.
Thus we can obtain by using Proposition~\ref{l1andl2bound} that 
\begin{align}\label{equation32}
\frac{1}{2}\int_D |x - 2\nabla \left( 1_D * \mathcal{N} \right) |^2 dx \le \frac{1}{4c_3}\int_{\mathbb{T}}|u|^2 d\theta + c_3\delta^2 \int_{\mathbb{T}}f(\theta)^2d\theta,
\end{align}
where $f(\theta) = \int_0^\theta u(s)^2 + 2u(s) ds $. Moreover, we have 
\begin{align}\label{equation57}
|f(\theta)| \le \int_{0}^{\theta}3|u(s)|ds < \int_{\mathbb{T}} 3|u(s)|ds \le 3c_3 \mathcal{A}_0(D),
\end{align}
where the last inequality follows from \eqref{asymmetryinu2}.  Therefore it follows from \eqref{equation32} and \eqref{equation57} that
\begin{align*}
\frac{1}{2}\int_D |x - 2\nabla \left( 1_D * \mathcal{N} \right) |^2 dx \le \frac{1}{4c_3}\int_{\mathbb{T}} |u|^2d\theta + 18\pi c_3^3\delta^2\mathcal{A}_0(D)^2,
\end{align*}
which proves the claim \eqref{step2claim}.

\textbf{Step 3.} Now we will prove that
\begin{align}\label{l2lessl1}
\int_{\mathbb{T}} |u(\theta)|^2d\theta \le 4c_3c_4\mathcal{A}_0(D)^2.
\end{align}
 Since $\Omega < \Omega_3 \le \frac{1}{4}$, it follows from \eqref{secondmoment2} that
 \begin{align}
 \left( 1-2\Omega\right)\left(\int_{D}\frac{|x|^2}{2}dx-\frac{|D|^2}{4\pi} \right)>\frac{1}{2c_3}\int_{\mathbb{T}}|u(\theta)|^2d\theta.
 \end{align}

Thus it follows from \eqref{variationalidentity1} and \eqref{step2claim} that
\begin{align*}
\frac{1}{2c_3}\int_{\mathbb{T}}|u(\theta)|^2d\theta < \frac{1}{4c_3}\int_{\mathbb{T}}|u(\theta)|^2d\theta + c_4\mathcal{A}_0(D)^2,
\end{align*}
which proves \eqref{l2lessl1}.

\textbf{Step 4.} Finally, we will prove $D=B$ by showing that $\mathcal{A}_0(D)=0$. This will be done by estimating the left/right-hand side in \eqref{variationalidentity2}. It follows from Proposition~\ref{alphasquared} and \eqref{asymcosmparison} that
\begin{align}\label{lefthandside2}
\left(1-2\Omega\right)\left(\frac{|D|^2}{4\pi} - \int_D pdx\right) \ge \frac{1}{2}\sigma \mathcal{A}(D)^2 \ge \frac{1}{2c_1^2}\sigma\mathcal{A}_0(D)^2,
\end{align}
where we used $\Omega < \frac{1}{4}$. Moreover, it follows from \eqref{secondmoment2} and \eqref{l2lessl1} that
\begin{align}\label{righthandside}
2\Omega\left(\int_D \frac{|x|^2}{2}dx - \frac{|D|^2}{4\pi}\right) \le 2\Omega c_3 \int_{\mathbb{T}}|u(\theta)|^2d\theta \le 8\Omega c_3^2c_4\mathcal{A}_0(D)^2.
\end{align}
Therefore \eqref{variationalidentity2} yields that
\begin{align*}
\left(8\Omega c_3^2 c_4 - \frac{\sigma}{2c_1^2}\right)\mathcal{A}_0(D)^2 \ge 0.
\end{align*}
 This implies $\mathcal{A}_0(D) = 0$, since $8\Omega c_3^2c_4 - \frac{\sigma}{2c_1^2} < 8\Omega_3 c_3^2c_4 - \frac{\sigma}{2c_1^2} \le 0$, which follows from \eqref{parameters3} and $\Omega < \Omega_3$. This proves that $D= B$.

\end{proofprop}

\section{Rotating patches with $m$-fold symmetry}\label{section4}
We now move on to the quantitative estimates for $m$-fold symmetric rotating patches. We say a domain $D$ is $m$-fold symmetric, if $D$ is invariant under rotation by $\frac{2\pi}{m}$. We divide this section into two subsections: The first subsection is devoted to the proof of Theorem~\ref{largem2} and the second subsection is devoted to the proof of Theorem~\ref{largem}.


\subsection{Proof of Theorem~\ref{largem2}}\label{proofoftheorem2}
The goal of this subsection is to prove Theorem~\ref{largem2}. As explained in Remark~\ref{remark}, angular velocity $\Omega$ is independent of radial dilation, thus we will assume that $|D| = |B| = \pi$ throughout this subsection. 

 For a simply-connected and $m$-fold symmetric patch $D$, we denote $r_{min}:=\inf_{x\in\partial D}|x|$, and $r_{max}:=\sup_{x\in\partial D}|x|$. Note that the origin is necessarily contained in $D$ since $D$ is simply-connected and $m$-fold symmetric, therefore $r_{min}>0$. Furthermore, since we are assuming $|D| = \pi$, it is necessarily $r_{min}<1$ and $r_{max}>1$ if $D$ is not a disk. 
 
 We will prove the theorem by contrapositive. We suppose to the contrary that $(D,\Omega)$ is an $m$-fold symmetric  solution with sufficiently large $m$ and $\lambda := \frac{1}{2}-\Omega$ is sufficient large compared to $\frac{1}{m}$. Then Lemma~\ref{phimbound} tells us that the patch is necessarily star-shaped and the polar graph that parametrizes $\partial D$ must be small. With this fact, we will apply the identity \eqref{variationalidentity1} and Proposition~\ref{l1andl2bound} to derive an upper bound of $\lambda$, which we expect to contradict our initial assumption on $\lambda$.

 Now we introduce a decomposition of the stream function $1_D * \mathcal{N}$. We define a radial function $g:\R^2 \mapsto \R$ as follows (where we denote it by $g(r)$ by slight abuse of nontation): 
\begin{align*}
g(r) := \frac{1}{2\pi r}\mathcal{H}^1\left( \partial B_r \cap D\right),
\end{align*}
where $\mathcal{H}^{1}$ denotes the $1$-dimensional Hausdorff measure.
Then we shall write, in polar coordinates,
\begin{align}\label{decomp}
\left(1_D * \mathcal{N}\right)(r,\theta) = \left(g * \mathcal{N}\right)(r) + \left( 1_D - g \right) * \mathcal{N}(r,\theta) =: \varphi^{r}(r) + \varphi_{m}(r,\theta).
\end{align}
Therefore  the relative stream function can be written as $\Psi(r,\theta) = \varphi^r(r)-\frac{\Omega}{2}r^2 + \varphi_m(r,\theta)$. 

Note that $g$ is a radial function with the same integral as $1_D$ on each $\partial B_r$. If $D$ is $m$-fold symmetric for large $m$, we would expect that the velocity field generated by the vorticity $1_D$ must be very close to the velocity field generated by $g$, that is, we expect that $|\nabla \varphi_m | \ll 1$ if $m\gg1$. Below we will give a quantitative proof of this fact in Lemma~\ref{derivatives}.

\begin{lemma}\label{derivatives}
 Let $D$ be an $m$-fold symmetric bounded domain for $m\ge 3$. Then
\begin{align} 
\partial_r \varphi^{r}(r) = \frac{|D\cap B_r|}{2\pi r}, \label{radialderivative} \\
|\nabla \varphi_m(r,\theta)| \lesssim \frac{r}{m}. \label{errorderivative}
\end{align}
\end{lemma}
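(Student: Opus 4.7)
For \eqref{radialderivative} I would perform a direct radial Poisson solve. Since $g$ is radial and satisfies $\Delta(g*\mathcal{N}) = g$ in the distributional sense, the polar form of the Laplacian gives $\frac{1}{r}\partial_r(r\partial_r\varphi^r) = g(r)$, and integrating from $0$ to $r$ produces $r\partial_r\varphi^r(r) = \int_0^r s\, g(s)\,ds$. The definition of $g$ together with the coarea formula then gives $\int_0^r s\, g(s)\,ds = \frac{1}{2\pi}\int_0^r \mathcal{H}^1(\partial B_s \cap D)\,ds = |D \cap B_r|/(2\pi)$, yielding \eqref{radialderivative}.

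For \eqref{errorderivative} my plan is a Fourier decomposition in the angular variable combined with Parseval. The $m$-fold symmetry of $D$ gives $1_D(\rho, \phi) = \sum_{k \in m\Z}c_k(\rho)e^{ik\phi}$ with $c_0(\rho) = g(\rho)$, and Parseval on the unit circle yields the key $L^2$ control
\begin{equation*}
\sum_{k \ne 0,\, m\mid k}|c_k(\rho)|^2 = g(\rho)(1 - g(\rho)) \le \tfrac{1}{4}.
\end{equation*}
Substituting the classical polar expansion of $\log|re^{i\theta}-\rho e^{i\phi}|$ into $\varphi_m = \mathcal{N}*(1_D - g)$ produces
\begin{equation*}
\varphi_m(r,\theta) = \sum_{k \ne 0,\, m\mid k}A_k(r)\,e^{ik\theta},\qquad A_k(r) = -\frac{1}{2|k|}\int_0^\infty \Bigl(\tfrac{\min(r,\rho)}{\max(r,\rho)}\Bigr)^{|k|}c_k(\rho)\rho\,d\rho,
\end{equation*}
and both components of $\nabla\varphi_m$ in polar coordinates can be written as angular convolutions of $(1_D - g)(\rho, \cdot)$ against kernels of the form $K(\rho/r, \theta - \phi) := \sum_{k \ne 0,\, m\mid k}(\rho/r)^{|k|}e^{ik(\theta-\phi)}$, which is a geometric series summable in closed form.

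The crucial estimate will be Cauchy--Schwarz paired with Parseval in $\phi$:
\begin{equation*}
\Bigl|\int_0^{2\pi}K(\rho/r,\, \theta-\phi)(1_D-g)(\rho,\phi)\,d\phi\Bigr| \lesssim \Bigl(\tfrac{(\rho/r)^{2m}}{1-(\rho/r)^{2m}}\Bigr)^{1/2},
\end{equation*}
using $\sum_{k\ne 0,\, m\mid k}t^{2|k|} = \frac{2t^{2m}}{1-t^{2m}}$ together with the $L^2$ bound on the $c_k$. Substituting $t = \rho/r$ in the remaining $\rho$-integral then reduces the inner contribution to the Beta-type integral $\int_0^1 t^{m+1}(1-t^{2m})^{-1/2}\,dt$, which is $O(1/m)$ as $m \to \infty$. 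The outer contribution (from $\rho > r$) produces an analogous integral $\int_0^1 s^{m-3}(1-s^{2m})^{-1/2}\,ds$ whose integrability at $s = 0$ requires exactly $m \ge 3$; both contributions are thus bounded by $r/m$ times a universal constant, and an entirely parallel computation handles $r^{-1}\partial_\theta \varphi_m$.

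The main obstacle will be avoiding a spurious $\log m$ factor: the naive triangle bound $|\partial_r\varphi_m| \le \sum_k |A_k'(r)|$ with the mode-wise estimate $|A_k'(r)|\lesssim r/|k|$ only gives $(r/m)\sum_j 1/j$, which diverges. The square-summability of the $c_k$ coming from the Parseval identity is precisely what replaces this divergent logarithm with a convergent Beta integral of order $1/m$, producing the stated sharp bound.
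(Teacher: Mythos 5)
Your proof of \eqref{radialderivative} is essentially the same as the paper's (integrate the radial Poisson equation, apply the coarea formula), so there is nothing more to say there.

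Your proof of \eqref{errorderivative} is correct, but takes a genuinely different route from the paper. The paper also starts from the Fourier decomposition of $\varphi_m$ (see the appendix Lemma giving explicit integral formulas for $\partial_r\varphi_m$ and $\partial_\theta\varphi_m$), but it sums the geometric series in closed form to produce an explicit Poisson-type kernel, then uses the crude pointwise bound $|1_D - g|\le 1$ together with the change of variables $x = (\rho/r)^m$, the $\tfrac{2\pi}{m}$-periodicity of the kernel in the angular variable, and a splitting of the $x$-integral near and away from $x=1$ to bound the double integral by $O(r/m)$. You instead keep the Fourier modes $c_k(\rho)$ separate, replace the $L^\infty$ input by the weaker $L^2$ Parseval bound $\sum_{k\ne 0}|c_k(\rho)|^2 = g(1-g)\le \tfrac14$, and then pair Cauchy--Schwarz in the angular variable with the explicit $\ell^2$ sum $\sum_{m\mid k, k\ne 0}t^{2|k|}=2t^{2m}/(1-t^{2m})$, which reduces both contributions to Beta-type integrals $\int_0^1 t^{m+1}(1-t^{2m})^{-1/2}\,dt$ and $\int_0^1 t^{m-3}(1-t^{2m})^{-1/2}\,dt$, each of size $O(1/m)$ uniformly for $m\ge3$. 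Notably, both proofs meet the same obstruction at $m=3$ (in the paper it is the exponent $x^{-2/m}$ in the $\rho<r$ range, in yours the exponent $t^{m-3}$ in the $\rho>r$ range), and your diagnosis that the naive mode-wise bound $|A_k'(r)|\lesssim r/|k|$ produces a spurious $\log$ is exactly right — the paper avoids the same pitfall by summing the kernel first. Your route is more ``spectral'' and avoids needing the closed-form sums of Lemma~\ref{series3}, while the paper's is more self-contained once those sums are available; both yield the sharp $O(r/m)$ bound.
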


\begin{proof} Let us prove \eqref{radialderivative} first. Obviously, \eqref{radialderivative} is equivalent to
\begin{align}\label{equation35}
2\pi r\partial_r \varphi^r(r) = |D\cap B_r|.
\end{align}
Clearly both sides of \eqref{equation35} are zero at $r=0$. Also we have that
\begin{align*}
\partial_r \left(|D \cap B_r| \right) &= \mathcal{H}^1\left( D\cap \partial B_r\right) = 2\pi r g(r) = 2\pi r \Delta \left(\varphi^r(r) \right) = \partial_r \left( 2\pi r\partial_r \varphi^r(r)\right),
\end{align*}
where we used $\Delta = \frac{1}{r}\partial_r \left(r\partial_r\right) + \frac{1}{r^2}\partial_{\theta\theta}$. This proves \eqref{equation35}, thus \eqref{radialderivative}.

We will prove \eqref{errorderivative} by using the formula for the stream function given in Lemma~\ref{derivatives335}. Let $h(r,\theta) := 1_D(r\cos\theta,r\sin\theta)-g(r)$. We apply \eqref{formula4} and \eqref{formula3} to \eqref{errorderivative111} and \eqref{errorderivative112} respectively, and obtain

\begin{align}
\partial_r\varphi_m(r,\theta) & = \frac{1}{2\pi}\int_{\mathbb{T}}\int_0^r  h(\rho,\eta+\theta)\left(\frac{\left(\frac{\rho}{r}\right)^{m+1}\left(\cos(m\eta)-\left(\frac{\rho}{r}\right)^m\right)}{\left(1-\left(\frac{\rho}{r}\right)^m\right)^2+2\left(\frac{\rho}{r}\right)^m(1-\cos{(m\eta)})} \right)d\rho d\eta \nonumber\\
& \  - \frac{1}{2\pi}\int_{\mathbb{T}}\int_r^{\infty}  h(\rho,\eta+\theta)\left(\frac{\left(\frac{r}{\rho}\right)^{m-1}(\cos(m\eta)-\left(\frac{r}{\rho}\right)^m)}{\left(1-\left(\frac{r}{\rho}\right)^m\right)^2+2\left(\frac{r}{\rho}\right)^m(1-\cos{(m\eta)})} \right)d\rho d\eta \nonumber \\
&=: A_1 - A_2,  \label{radial1123} \\
\partial_{\theta} \varphi_m(r,\theta) &= -r\frac{1}{2\pi}\int_{\mathbb{T}}\int_0^{r}h(\rho,\eta+\theta)\left(\frac{\left(\frac{\rho}{r}\right)^{m+1}\sin(m\eta)}{\left(1-\left(\frac{\rho}{r}\right)^{m}\right)^2+2\left(\frac{\rho}{r}\right)^m(1-\cos(m\eta))}\right)d\rho d\eta \nonumber\\
&\ - r\frac{1}{2\pi}\int_{\mathbb{T}}\int_r^{\infty}h(\rho,\eta+\theta)\left(\frac{\left(\frac{r}{\rho}\right)^{m-1}\sin(m\eta)}{\left(1-\left(\frac{r}{\rho}\right)^{m}\right)^2+2\left(\frac{r}{\rho}\right)^m(1-\cos(m\eta))}\right)d\rho d\eta \nonumber \\
& =: -r A_3 -r A_4\label{angular1123}
\end{align}
We claim that
\begin{align}\label{claim442}
|A_i| \lesssim \frac{r}{m} \text{ for  }i=1,2,3 \ \text{ and }\ 4. 
\end{align}
Let us assume for a moment that the claim is true. Then \eqref{radial1123} and \eqref{angular1123} yield that $|\nabla \varphi_m(r,\theta)| \sim |\partial_{r}\varphi_{m}(r,\theta)| + |\frac{\partial_{\theta}\varphi_m(r,\theta)}{r}| \lesssim \frac{r}{m}$, which finishes the proof. We give a proof  of \eqref{claim442} for only $A_2$ since the other terms can be proved in the same way. Note that in the proof, we will see that the assumption $m\ge 3$ is crucial to estimate $A_2$ and $A_4$.  

 From the change of the variables, $\left(\frac{r}{\rho}\right)^{m} \mapsto x$ and $\frac{2\pi}{m}$-periodicity of the integrand in the angular variable, it follows that
\begin{align*}
|A_2| &\le \int_{\mathbb{T}}\int_0^{1} \bigg| h(rx^{-\frac{1}{m}},\eta+\theta) \left( \frac{x^{1-\frac{1}{m}}(\cos(m\eta)-x)}{\left( 1-x\right)^2+2x(1-\cos(m\eta))}\right)\frac{r}{m}x^{-1-\frac{1}{m}}\bigg|dxd\eta\\
& \le \frac{r}{m} \int_{\mathbb{T}}\int_{0}^{1}\bigg| \frac{x^{-\frac{2}{m}}(\cos(m\eta)-x)}{(1-x)^2+2x(1-\cos(m\eta))}\bigg|dxd\eta\\
& \le r\int_{0}^{\frac{2\pi}{m}}\int_0^{1} \frac{x^{-\frac{2}{m}}((1-x)+(1-\cos(m\eta)))}{(1-x)^2+2x(1-\cos(m\eta))}dxd\eta \\
& = \frac{r}{m}\int_{\mathbb{T}}\int_0^{1}\frac{x^{-\frac{2}{m}}((1-x) + (1-\cos \eta ))}{(1-x)^2+2x(1-\cos\eta)} dxd\eta\\
& = \frac{2r}{m}\int_{0}^{\pi} \int_0^{1}\frac{x^{-\frac{2}{m}}((1-x) + (1-\cos\eta))}{(1-x)^2+2x(1-\cos\eta)}dxd\eta\\
& = \frac{2r}{m}\left( \int_{0}^{\pi}\int_{0}^{\frac{1}{2}}\frac{x^{-\frac{2}{m}}((1-x) + (1-\cos\eta))}{(1-x)^2+2x(1-\cos\eta)} dxd\eta + \int_{0}^{\pi}\int_{\frac{1}{2}}^{1}\frac{x^{-\frac{2}{m}}((1-x) + (1-\cos\eta))}{(1-x)^2+2x(1-\cos\eta)} dxd\eta \right)\\
& = \frac{2r}{m}\left(  A_{21} + A_{22} \right)
\end{align*}
where we used $\frac{2\pi}{m}$-periodicity of the integrand to get the third inequality, the change of variables, $\eta \mapsto \frac{1}{m}\eta$ to get the first equality, and the evenness of the integrand in $\eta$ to get the second equality. Note that the denominator of the integrand $A_{21}$ is bounded from below by a strictly positive number, therefore
\begin{align*}
A_{21} \lesssim \int_{0}^{\pi}\int_{0}^{\frac{1}{2}}x^{-\frac{2}{m}}dxd\eta \lesssim \frac{m}{m-2} \lesssim 1,
\end{align*}
for $m\ge 3$.
For $A_{22}$, we use that $(1-\cos\eta) ~\sim \eta^2$ for $\eta \in (0,\pi)$ and the change of variables, $x\mapsto 1-x$, to obtain
\begin{align*}
A_{22}\lesssim \int_{0}^{\pi}\int_{0}^{\frac{1}{2}}\frac{x+\eta^2}{x^2+\eta^2}dxd\eta & = \int_{0}^{\pi}\int_{0}^{\frac{1}{2}}1_{\left\{ x<\eta\right\}} \frac{x+\eta^2}{x^2+\eta^2}dxd\eta + \int_{0}^{\pi}\int_{0}^{\frac{1}{2}}1_{\left\{ x\ge \eta\right\}} \frac{x+\eta^2}{x^2+\eta^2}dxd\eta\\
& \le \int_{0}^{\pi}\int_0^{\eta} \frac{\eta+\eta^2}{\eta^2}dxd\eta + \int_{0}^{\frac{1}{2}}\int_0^{x}\frac{x+x^2}{x^2}d\eta dx\\
&  \lesssim 1.
\end{align*}
This proves $|A_2| \lesssim \frac{r}{m}$. As mentioned, the same argument applies to $A_1$, $A_3$ and $A_4$ to prove \eqref{claim442}. This completes the proof. 

\end{proof}

 From \eqref{radialderivative} and \eqref{errorderivative} in the above lemma, it is clear that $\partial_r \Psi(r_{min},\theta) = r_{min}\left(\frac{1}{2} - \Omega - \frac{\partial_r \varphi_m(r_{min},\theta)}{r_{min}}\right)$ and $\bigg|\frac{\partial_r\varphi_m(r_{min},\theta)}{r_{min}}\bigg| \sim  \frac{1}{m}$. Thus one can expect that if $\frac{1}{2}-\Omega$ is sufficiently large  compared to $\frac{1}{m}$, then the level set $\partial D$ cannot be too far from the a circle. We give a detailed proof for this in the following lemma.

 \begin{lemma}\label{phimbound}
Assume that $(D,\Omega)$ is a solution to \eqref{rotatingpatch}. Then there exist constants $c_1,c_2>0 $ and $m_1\ge 3$ such that if $D$ is $m$-fold symmetric for some $m\ge m_1$ and $\lambda =\frac{1}{2}-\Omega > \frac{c_1}{m}$, then $D$ is star-shaped and $|r_{max}-r_{min}|< \frac{c_2}{m}$. Hence there exist $u\in C^{1}(\mathbb{T})$ such that
\begin{align*}
\partial D = \left\{ (1+u(\theta))(\cos\theta,\sin\theta) : \theta\in \mathbb{T}\right\} \quad \text{ and }  \quad \rVert u \rVert_{L^{\infty}(\mathbb{T})}< \frac{c_2}{m}.
\end{align*}

 \end{lemma}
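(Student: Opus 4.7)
The plan is to analyze the relative stream function $\Psi$ through its radial/angular decomposition $\Psi(r,\theta) = \Phi(r) + \varphi_m(r,\theta)$ introduced in \eqref{decomp}, where $\Phi(r) := \varphi^r(r) - \tfrac{\Omega}{2}r^2$. By Lemma~\ref{derivatives}, $|\nabla\varphi_m(r,\theta)| \leq C r/m$, and since $g$ is the angular average of $1_D$ at each radius we have $\varphi_m(0)=0$, so radial integration gives the pointwise bound $|\varphi_m(r,\theta)| \leq C r^2/(2m)$. The key identity used throughout is the one obtained by evaluating the constancy of $\Psi$ on $\partial D$ at the extremal points $x_{\min}, x_{\max}$:
\[
\Phi(r_{\max}) - \Phi(r_{\min}) = \varphi_m(x_{\min}) - \varphi_m(x_{\max}).
\]

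Before exploiting this identity I would establish the a priori bounds $R_1 \leq r_{\min} \leq 1 \leq r_{\max} \leq R_0$ for absolute constants $R_0, R_1$. Since $\Delta\Psi = 2\lambda > 0$ in $D$, the function $\Psi$ is subharmonic, and the Hopf boundary lemma at $x_{\max}$ (where the outward normal is radial) yields $\partial_r\Psi(x_{\max}) \geq 0$; combined with $\Phi'(r_{\max}) = 1/(2r_{\max}) - \Omega r_{\max}$ (from $|D\cap B_{r_{\max}}|=\pi$) and $|\partial_r\varphi_m(x_{\max})| \leq C r_{\max}/m$, this gives $r_{\max}^2 \leq (2\Omega - 2C/m)^{-1}$, which is bounded when $\Omega$ stays away from $0$; the complementary case of $\Omega$ near $0$ (i.e.\ $\lambda$ close to $1/2$) is controlled separately via Theorem~\ref{smallomega}. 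The lower bound $r_{\min} \geq R_1 > 0$ is the subtle step, and is precisely where the assumption $m \geq 3$ enters decisively through Lemma~\ref{derivatives}: as noted in the introduction, for $m \geq 3$ one has the $D$-independent global bound $|\nabla(1_D*\mathcal{N})|/|x| \leq C$, and combining this with the refined near-origin decay $|\varphi_m(r,\theta)| = O(r^m)$ (which follows from $\varphi_m$ having only Fourier modes of angular frequency $\geq m$) one argues by contradiction that $r_{\min} \to 0$ is incompatible with $\Psi\rvert_{\partial D}$ constant under $\lambda > c_1/m$.

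With these bounds secured, the main estimate exploits $m$-fold symmetry: the extrema $x_{\min}$ and $x_{\max}$ can be chosen within a common fundamental wedge of angular width $2\pi/m$, so their angular separation is at most $2\pi/m$. Integrating $\nabla\varphi_m$ along the piecewise path that goes from $x_{\max}$ radially to $(r_{\min},\theta_{\max})$ and then along the arc at radius $r_{\min}$ to $x_{\min}$ bounds the right-hand side of the identity by
\[
|\varphi_m(x_{\min}) - \varphi_m(x_{\max})| \leq \frac{C R_0}{m}(r_{\max} - r_{\min}) + \frac{2\pi C R_0^2}{m^2}.
\]
On the left-hand side, using $|D \cap B_r| \geq \pi r_{\min}^2$ for $r \in [r_{\min},r_{\max}]$ (since $B_{r_{\min}} \subset D$) and Taylor expanding in $t := r_{\max} - r_{\min}$ yields $\Phi(r_{\max}) - \Phi(r_{\min}) \geq r_{\min}\lambda\,t - O(t^2)$. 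Combining the two inequalities and choosing $c_1$ large enough that $R_1 c_1 > 2 C R_0$ allows one to absorb the linear-in-$t$ term on the right and conclude $t \leq c_2/m$ for a constant $c_2$ depending only on $R_0, R_1, c_1, C$. With this bound, $\partial D$ lies in a thin annulus on which $\partial_r\Psi \geq r(\lambda - C/m) > 0$ (since $c_1 > C$), so the implicit function theorem gives the star-shape parametrization $\partial D = \{(1+u(\theta))(\cos\theta,\sin\theta)\}$ with $\|u\|_{L^\infty(\mathbb{T})} \leq c_2/m$. The hardest step is the a priori lower bound $r_{\min} \geq R_1$: for $m=2$ (Kirchhoff ellipses) $r_{\min}$ can be arbitrarily close to $0$, so it is precisely the $m \geq 3$ gradient bound of Lemma~\ref{derivatives} that is indispensable here.
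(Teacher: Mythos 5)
The proposal has a genuine gap, and the route you chose is structurally different from the paper's in a way that causes trouble.

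Your plan requires the a priori bounds $R_1 \le r_{\min}$ and $r_{\max} \le R_0$ for absolute constants, but these are not available at this stage. For the upper bound, the Hopf-lemma argument gives $r_{\max}^2 \le (2\Omega - 2C/m)^{-1}$, which blows up as $\Omega \to 0$, and your fallback via Theorem~\ref{smallomega} does not help: that theorem gives a \emph{lower} bound $\sup_{x\in\partial D}|x| > \kappa_0\Omega^{-1/2}$, not an upper bound, so it actually confirms that $r_{\max}$ can be large for small $\Omega$. Note also that an a priori estimate of the form $r_{\max} - 1 \lesssim 1/m$ is exactly what Corollary~\ref{rmaxbound} proves, but its proof relies on Theorem~\ref{largem2}, which in turn relies on the present lemma; so appealing to any such bound would be circular. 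The lower bound on $r_{\min}$ is likewise left as a sketch (``one argues by contradiction''), and the paper never establishes any such bound. Finally, even granting the a priori bounds, your main step derives an inequality of the form
$r_{\min}\lambda\, t - O(t^2) \le \tfrac{CR_0}{m} t + \tfrac{2\pi C R_0^2}{m^2}$ with $t := r_{\max} - r_{\min}$; this is a quadratic inequality in $t$ and is satisfied for $t$ of order one because the $-O(t^2)$ term dominates, so one cannot conclude $t \lesssim 1/m$ from it. The Taylor expansion producing the $r_{\min}\lambda\, t$ term only has the claimed sign over a radius window of width $\sim r_{\min}\lambda$ near $r_{\min}$, which again presupposes $t$ small.

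The paper's proof avoids every one of these issues by not comparing $\Psi$ at the two extremal radii. Instead it fixes the small quantity $\epsilon := \tfrac{r_{\min}c_1}{4m}$ (which is automatically less than $\tfrac{r_{\min}}{4}\lambda$ under the hypothesis $\lambda > c_1/m$), picks $x_1\in\partial D$ at radius $r_{\min}$ and an arbitrary $x_2$ at radius $r_{\min}+\epsilon$ within an angular wedge of width $2\pi/m$, and shows directly that $\Psi(x_2) > \Psi(x_1)$. The radial monotonicity bound $\partial_r(\varphi^r - \tfrac{\Omega}{2}r^2) = r q(r) > \tfrac{r_{\min}}{2}\lambda$ is established only on the thin annulus $r\in(r_{\min}, r_{\min}(1+\tfrac{1}{4}\lambda))$, where it holds unconditionally via Taylor expansion of $1/r^2$ and monotonicity of $|D\cap B_r|$. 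All error terms scale with $r_{\min}$ (or $r_{\min}^2$), so the three pieces $L_3, L_4, L_5$ are each nonnegative by choice of $c_1$ and $m_1$ with no need to know anything about the size of $r_{\min}$ or $r_{\max}$. Once $\Psi(x_2) > \Psi(x_1)$ for every such $x_2$, the fact that $\partial D$ is a connected level set of $\Psi$ together with $m$-fold symmetry forces $r_{\max} \le r_{\min}+\epsilon$, giving $|r_{\max}-r_{\min}| < c_2/m$ since $r_{\min}<1$; star-shapedness then follows from positivity of $\partial_r\Psi$ on this thin annulus. If you want to salvage your argument, the fix is to abandon the a priori bounds, work only on the window $r\in(r_{\min}, r_{\min}+\epsilon)$ with $\epsilon$ defined as above, and conclude $r_{\max}\le r_{\min}+\epsilon$ from the level-set structure rather than by plugging $r_{\max}$ into the inequality directly.
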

 
 \begin{proof}
Thanks to \eqref{errorderivative} in Lemma~\ref{derivatives}, we can find a constant $C>0$ (which we can also assume to be larger then 1) such that
 \begin{align}\label{boundforphim}
 |\nabla_{x} \varphi_m(r,\theta)| < C \frac{r}{m},
 \end{align}
where $\nabla_x$ denotes the gradient in Cartesian coordinates, that is, $\nabla_x := \partial_r +\frac{1}{r}\partial_\theta$. 
 We will first prove the bound for $r_{max}-r_{min}$ and show star-shapeness of $\partial D$ afterwards. Let 
 \begin{align}\label{numbers}
 c_1:=\max\left\{ 6C, \sqrt{48\pi C}\right\}+1,\quad c_2:=\frac{c_1}{4}\quad \text{ and  } \quad m_1:= \max\left\{ \frac{3C}{2},\frac{c_1}{4},3\right\}+1.
 \end{align}
  We will show that if $\lambda  > \frac{c_1}{m}$ and $m\ge m_1$, then  $r_{max}-r_{min} < \frac{c_2}{m}$.

Let $q(r) := \frac{|D\cap B_r|}{2\pi r^2}-\Omega$. Since $\frac{1}{r^2} > \frac{1}{r_{min}^2} - \frac{2}{r_{min}^3}(r- r_{min})$ for $r>r_{min}$, and $|D\cap B_r|$ is increasing in $r$, we have that
 \begin{align*}
 q(r) >\frac{1}{2}\lambda, \text{ for }  r \in \left(r_{min}, r_{min}\left(1+\frac{1}{4}\lambda\right)\right),
 \end{align*}
 which implies that
 \begin{align}\label{radialderivative1}
 \partial_r \left(\varphi^{r}(r) - \frac{\Omega}{2}r^2\right) = rq(r) > \frac{r_{min}}{2}\lambda, \text{ for }r\in \left(r_{min}, r_{min}\left(1+\frac{1}{4}\lambda\right)\right),
 \end{align}
  where the  equality follows from \eqref{radialderivative} in Lemma~\ref{derivatives}.
Let $\ep:=\frac{r_{min}c_1}{4m}$. By the assumption $\lambda > \frac{c_1}{m}$, we have
\begin{align}\label{epsilon}
\ep<\frac{r_{min}}{4}\lambda.
\end{align}
 We choose $x_1,x_2 \in\R^2$ such that for some $\theta_1,\theta_2 \in \mathbb{T}$,
\begin{align*}
x_1 = r_{min}(\cos(\theta_1),\sin(\theta_1)),\quad x_2=(r_{min}+\ep)(\cos(\theta_2),\sin(\theta_2)) \quad \text{ and }\quad |\theta_1-\theta_2|\le \frac{2\pi}{m}.
\end{align*}
  We claim that
 \begin{align}\label{claim332}
 \Psi(x_2) - \Psi(x_1) > 0.
 \end{align} 
 Let us assume that the claim is true for a moment. Then from $m$-fold symmetry of $D$ and the fact that $\partial D$ is a level set of $\Psi$, it follows that $r_{max}\le r_{min}+\ep$. Thus it follows from  \eqref{numbers}, \eqref{epsilon} and $r_{min}<1$ that
 
 \begin{align}\label{result1}
 r_{max}-r_{min} \le \ep =\frac{c_2r_{min}}{m}<\frac{c_2}{m}.
 \end{align}
 Furthermore, for all $x \in \partial D$, it follows from \eqref{boundforphim} and \eqref{radialderivative1} that $ \partial_r \Psi(x) > \frac{r_{min}}{2}\lambda - \frac{Cr_{max}}{m}$. Hence
 \begin{align*}
 \partial_r \Psi(x) > \frac{r_{min}c_1}{2m} - \frac{Cr_{max}}{m} \ge \frac{c_{1}}{2m}(r_{max}-\frac{c_2}{m})-\frac{Cr_{max}}{m} =\left(\frac{c_1r_{max}}{4m}-\frac{Cr_{max}}{m}\right)+\frac{c_1}{2m}\left(\frac{r_{max}}{2}-\frac{c_2}{m}\right)>0,
 \end{align*}
 where the first inequality follows from $\lambda>\frac{c_1}{m}$, the second inequality follows from \eqref{result1} and the last inequality follows from \eqref{numbers} and $r_{max}\ge1$, which say $\frac{c_{1}}{4}>C$ and $\frac{r_{max}}{2}>\frac{1}{2}>\frac{c_2}{m}$. Therefore the implicit function theorem yields that there exists $u\in C^{1}(\mathbb{T})$ such that $\partial D = \left\{ (1+u(\theta))(\cos\theta,\sin\theta) : \theta\in \mathbb{T}\right\}$.  
 This proved star-shapeness of $D$ and the desired $L^{\infty}$-norm  bound for $u$.

 Now it suffices to prove \eqref{claim332}. We compute

 \begin{align*}
 \Psi(x_2) - \Psi(x_1) & = \underbrace{\left(\varphi^r(|x_2|)-\frac{\Omega}{2}|x_2|^2\right) - \left( \varphi^r(|x_1|)-\frac{\Omega}{2}|x_1|^2\right)}_{=:L_1} + \underbrace{\varphi_m(x_2) - \varphi_m(x_1)}_{L_2}.
 \end{align*}
 Thanks to \eqref{radialderivative1}, we have 
  \begin{align}\label{radialpart11}
L_1  > \frac{r_{min}}{2}\lambda \left(|x_2| - |x_1|\right) = \frac{r_{min}\lambda\ep}{2}.
 \end{align}
 To estimate $L_2$, let us pick $x_1'=r_{min}(\cos(\theta_2),\sin(\theta_2)) $. Then it follows from \eqref{boundforphim} that
 \begin{align}\label{errorpart11}
 L_2 & = \left(\varphi_m(x_2)-\varphi_m(x_1')\right)+\left(\varphi_m(x_1')-\varphi_m(x_1)\right) \nonumber\\
 & > - C\frac{|x_2|}{m} (|x_2|-|x_1|) -C\frac{r_{min}^2}{m}\frac{2\pi}{m}\nonumber\\
 & = -\frac{Cr_{min}\ep}{m} - \frac{C\ep^2}{m} -\frac{2\pi Cr_{min}^2}{m^2}.
 \end{align}
  Hence it follows from \eqref{radialpart11} and \eqref{errorpart11} that (we split $\frac{r_{min}\lambda \epsilon}{2}$ into three pieces evenly) 
 \begin{align}\label{iose}
  \Psi(x_2) - \Psi(x_1) & > \left(\frac{r_{min}\lambda\ep}{6} -\frac{Cr_{min}\ep}{m}\right) +\left(\frac{r_{min}\lambda\ep}{6} - \frac{C\ep^2}{m}\right) + \left(\frac{r_{min}\lambda\ep}{6} -\frac{2\pi Cr_{min}^2}{m^2}\right) =: L_3 + L_4 + L_5.
 \end{align}
 From $\lambda > \frac{c_1}{m}$ and \eqref{numbers}, which says $c_1\ge 6C$, we have $L_3 = \frac{r_{min}\ep}{6}\left(\lambda - \frac{6C}{m}\right) \ge 0.$ For $L_4$, it follows from that
  \begin{align*}
 L_4 = \ep\left(\frac{r_{min}\lambda}{6}-\frac{C\ep}{m}\right) > \ep\left( \frac{r_{min}\lambda}{6}-\frac{Cr_{min}\lambda}{4m}\right) = \ep\frac{r_{min}\lambda}{6}\left(1-\frac{3C}{2m}\right) > 0,
 \end{align*}
 where the first inequality follows from \eqref{epsilon} and the last inequality follows from \eqref{numbers}, which says $m\ge m_1>\frac{3C}{2}$.
 Finally, 
 \begin{align*}
 L_5 = \frac{r_{min}^2c_1\lambda}{24m}-\frac{2\pi Cr_{min}^2}{m^2} =\frac{r_{min}^2}{24m}\left( {c_1\lambda}-\frac{48\pi C}{m}\right) > \frac{r_{min}^2}{24m}\left(\frac{c_1^2}{m}-\frac{48\pi C}{m}\right)>0,
 \end{align*}
 where the first equality follows from the definition of $\ep$, the first inequality follows from $\lambda > \frac{c_1}{m}$ and the last inequality follows from \eqref{numbers}, which says $c_1 \ge \sqrt{48\pi C}$. Therefore it follows from \eqref{iose} that
 \begin{align}\label{separation}
 \Psi(x_2) - \Psi(x_1) > 0,
 \end{align}
which finishes the proof.
 \end{proof}

Now we are ready to prove Theorem~\ref{largem2}. 
\begin{proofthm}{largem2} Let $c_1$, $c_2$ and $m_1$ be constants in Lemma~\ref{phimbound} and $\delta$ be as in Proposition~\ref{l1andl2bound}. Lastly, let $c_3$ be the constant in \eqref{secondmoment2}. Now we set
\begin{align}\label{numbers2}
c:= \max\left\{c_1, \frac{c_3}{2}\left(c_2\delta+\frac{9\pi^2\delta}{c_2}\right) \right\} \quad \text{ and }\quad m_0 := \max\left\{2c_2, m_1  \right\}+1.
\end{align}
We will prove that if $(D,\Omega)$ is a solution to \eqref{rotatingpatch} such that $D$ is $m$-fold symmetric for $m\ge m_0$ and simply-connected, then 
\begin{align}\label{result2}
\lambda:=\frac{1}{2}-\Omega \le \frac{c}{m}.
\end{align} 
Towards a contradiction, let us suppose that there exists $(D,\Omega)$ such that 
\begin{align}\label{contra1}
\lambda>\frac{c}{m}.
\end{align}
 It is clear that \eqref{numbers2} implies $\lambda>\frac{c_1}{m}$ and $m\ge m_1$. Thus Lemma~\ref{phimbound} implies that there exists $u\in C^{1}(\mathbb{T})$ such that
\begin{align}\label{starshape112}
\partial D = \left\{ (1+u(\theta))(\cos\theta,\sin\theta) : \theta\in \mathbb{T}\right\} \quad \text{ and }  \quad \rVert u \rVert_{L^{\infty}(\mathbb{T})}<\frac{c_2}{m}.
\end{align}
Since $m\ge m_0 > 2c_2 $, which follows from \eqref{numbers2}, we have that $\rVert u\rVert_{L^{\infty}(\mathbb{T})} < \frac{1}{2}$.

 To derive a contradiction, we will use the identity~\eqref{variationalidentity1}. To estimate the right-hand side of it, we apply Proposition~\ref{l1andl2bound} with $a:=\frac{2c_2}{m} \in \left( 2\rVert u\rVert_{L^{\infty}(\mathbb{T})},1\right)$ and obtain
\begin{align}\label{equation990}
\int_D |x-2\nabla\left( 1_D * \mathcal{N}\right)|^2 dx \le \frac{2c_2\delta}{m}\int_{\mathbb{T}}|u|^2d\theta + \frac{\delta m}{2c_2}\int_{\mathbb{T}}f(\theta)^2d\theta \nonumber\\
\le \frac{2c_2\delta}{m}\int_{\mathbb{T}}|u|^2d\theta + \frac{\pi\delta m}{c_2}\rVert f \rVert_{L^{\infty}(\mathbb{T})}^2,
\end{align}
where $f(\theta) = \int_{0}^{\theta} u(s)^2+2u(s)ds$. Using \eqref{l1equall2} and $\frac{2\pi}{m}$-periodicity of $u$, it is clear that $f$ is also $\frac{2\pi}{m}$-periodic. Furthermore, for $\theta\in (0,\frac{2\pi}{m})$, we have that (recall that $\rVert u \rVert_{L^{\infty}(\mathbb{T})} < \frac{1}{2}$),
\begin{align*}
| f(\theta) |& = \bigg|\int_0^{\theta}u(s)^2+2u(s)ds \bigg| \le \int_0^{\theta}3 |u(s)|ds\le 3\sqrt{\int_{0}^{\theta}|u(s)|^2ds}\sqrt{\theta} < 3\sqrt{\int_0^{\frac{2\pi}{m}}|u(s)|^2ds}\sqrt{\frac{2\pi}{m}}\\
 &\le 3\frac{\sqrt{2\pi}}{m}\sqrt{\int_{\mathbb{T}}|u(s)|^2ds}.
 \end{align*}
 Thus, \eqref{equation990} yields that
 \begin{align}\label{equation992}
 \int_D |x-2\nabla\left( 1_D * \mathcal{N}\right)|^2 dx \le \frac{2c_2\delta}{m}\int_{\mathbb{T}}|u|^2d\theta +\frac{18\pi^2\delta }{c_2m} \int_{\mathbb{T}}|u|^2d\theta = \frac{1}{m}\left( 2c_2\delta+\frac{18\pi^2\delta}{c_2}\right)\int_{\mathbb{T}}|u|^2d\theta.
 \end{align}
 For the left-hand side of \eqref{variationalidentity1}, we use \eqref{secondmoment2} to obtain
 \begin{align}\label{equation991}
 \frac{2}{c_3}\int_{\mathbb{T}}|u|^2d\theta \le \int_{D} {|x|^2}dx-\frac{|D|^2}{2\pi}.
 \end{align}
Hence it follows from \eqref{equation992}, \eqref{equation991} and \eqref{variationalidentity1} that
\begin{align*}
\frac{2\lambda}{c_3}\int_{\mathbb{T}}|u|^2d\theta & \le \lambda \left(\int_{D} {|x|^2}dx-\frac{|D|^2}{2\pi}\right) = \frac{1}{2}\int_D |x-2\nabla\left( 1_D * \mathcal{N}\right)|^2 dx \le \frac{1}{m}\left( c_2\delta+\frac{9\pi^2\delta}{c_2}\right)\int_{\mathbb{T}}|u|^2d\theta.
\end{align*}
Therefore we have
\begin{align*}
\lambda \le \frac{c_3}{2}\left(c_2\delta+\frac{9\pi^2\delta}{c_2}\right)\frac{1}{m}\le \frac{c}{m},
\end{align*}
where the last inequality follows from our choice for $c$ in  \eqref{numbers2}. This contradicts our assumption \eqref{contra1}, thus completes the proof.
\end{proofthm}

By simple maximum principle type argument, Theorem~\ref{largem2} gives a upper bound for $r_{max}$.
  \begin{corollary}\label{rmaxbound}
  There exist constants $c>0$ and $m_1 \ge 3$ such that if $(D,\Omega)$ is a solution to \eqref{rotatingpatch} that is simply-connected, $m$-fold symmetric for some $m\ge m_1$ and $|D| = \pi$, then $r_{max}-1 \le \frac{c}{m}$.

 \end{corollary}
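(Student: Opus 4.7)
The plan is to combine Theorem~\ref{largem2} with a maximum principle argument applied to the relative stream function $\Psi = 1_D * \mathcal{N} - \frac{\Omega}{2}|x|^2$ at the outmost point. Write the decomposition $\Psi(r,\theta) = \varphi^r(r) - \frac{\Omega}{2}r^2 + \varphi_m(r,\theta)$ from \eqref{decomp}. Let $x_{\max} \in \partial D$ with $|x_{\max}| = r_{\max}$. Because $D$ is $m$-fold symmetric and simply-connected with $|D| = \pi$, we have $r_{\max} \ge 1$, and by Theorem~\ref{largem2} we know $\Omega \ge \frac{1}{2} - \frac{c}{m}$ for some fixed constant $c$ as soon as $m$ is large.

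First I would observe that $\partial_r \Psi(x_{\max}) \ge 0$. Indeed, inside $D$ we have $\Delta\Psi = 1 - 2\Omega > 0$, so $\Psi$ is subharmonic and attains its maximum on $\partial D$, on which it is constant. The outward normal to $D$ at $x_{\max}$ is the radial direction (since $r_{\max}$ is a maximum of $|x|$ along the $C^2$ boundary), so Hopf's lemma gives $\partial_r \Psi(x_{\max}) \ge 0$ (with strict inequality when $D \ne B$, and if $D = B$ the corollary is trivial).

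Next I would plug in the explicit formula for $\partial_r \varphi^r$ from Lemma~\ref{derivatives}. Since $B_{r_{\max}} \supset D$, we get $\partial_r \varphi^r(r_{\max}) = \frac{|D \cap B_{r_{\max}}|}{2\pi r_{\max}} = \frac{1}{2 r_{\max}}$. Together with the bound $|\partial_r \varphi_m(x_{\max})| \le C r_{\max}/m$ from \eqref{errorderivative}, the inequality $\partial_r \Psi(x_{\max}) \ge 0$ becomes
\begin{equation*}
\Omega\, r_{\max} \;\le\; \frac{1}{2 r_{\max}} + \frac{C r_{\max}}{m},
\qquad\text{i.e.,}\qquad
r_{\max}^2 \;\le\; \frac{1}{2\Omega - 2C/m}.
\end{equation*}
Using $\Omega \ge \tfrac{1}{2} - \tfrac{c}{m}$ from Theorem~\ref{largem2}, for $m$ large enough that $\frac{2(c+C)}{m} \le \frac{1}{2}$ we get $r_{\max}^2 \le 1/(1 - \tfrac{2(c+C)}{m}) \le 1 + \tfrac{4(c+C)}{m}$, whence $r_{\max} - 1 \le \tfrac{2(c+C)}{m}$ after using $\sqrt{1+y} \le 1 + y/2$. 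Choosing $m_1$ to absorb these conditions on $m$ yields the desired bound.

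No serious obstacle is expected: the only non-trivial ingredient is Hopf's lemma, whose applicability is guaranteed by the $C^2$-regularity of $\partial D$ and the fact that $\Delta\Psi = 1 - 2\Omega > 0$ inside $D$ for $m$ large. Everything else is a direct combination of Theorem~\ref{largem2}, the explicit radial derivative in Lemma~\ref{derivatives}, and the elementary inequality chain above.
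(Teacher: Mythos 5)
Your proposal is correct and follows essentially the same route as the paper: apply Theorem~\ref{largem2} for the lower bound on $\Omega$, use $\partial_r\varphi^r(r_{\max})=\frac{1}{2r_{\max}}$ and $|\partial_r\varphi_m|\lesssim r/m$ from Lemma~\ref{derivatives}, and invoke subharmonicity of $\Psi$ (the paper phrases this as "maximum principle for subharmonic functions" rather than Hopf's lemma) to get $\partial_r\Psi(r_{\max},\cdot)\ge 0$ at the outmost point. The resulting quadratic inequality $r_{\max}^2\le\frac{1/2}{1/2-(c+C)/m}$ and the conclusion $r_{\max}-1\lesssim 1/m$ match the paper's computation line for line.
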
   
 \begin{proof}
 Thanks to Theorem~\ref{largem2}, we can pick a constants $C_1$ and ${m}_0$ such that if $m\ge {m}_0$, then $\lambda < \frac{C_1}{m}$. Moreover, it follows from Lemma~\ref{derivatives} that there exists $C_2>0$ such that $|\nabla \varphi_m(r,\theta)| \le \frac{C_2r}{m}$. Now, let us choose
 \begin{align*}
 m_1 := \max\left\{{m}_0, 2\left(C_1+C_2\right)\right\}+1.
 \end{align*}
 Since $\Delta \Psi = 2\lambda >0$ in $D$, the maximum principle for subharmonic functions implies that $\partial_r \Psi(r_{max},0)>0$. Thus it follows from Lemma~\ref{derivatives}  that 
 \begin{align*}
 0 &<  \partial_r \varphi^{r}(r_{max}) +C_2\frac{r_{max}}{m}-\Omega r_{max} \\
 &= \frac{|D\cap B_{r_{max}}|}{2\pi r_{max}}+C_2\frac{r_{max}}{m}-\Omega r_{max}\\
 &= \frac{1}{2r_{max}}+\left(\frac{C_2}{m}-\frac{1}{2}+\lambda \right) r_{max} \\
 & \le \frac{1}{2r_{max}}+\left(\frac{C_1+C_2}{m}-\frac{1}{2}\right)r_{max},
 \end{align*}
 where we used $|D\cap B_{r_{max}}| = |D| = \pi$ to get the second equality and the last inequality follows from $\lambda \le \frac{C_1}{m}$.
 Since $\frac{C_1+C_2}{m}<\frac{1}{2}$, we obtain,
 \begin{align*}
 r_{max} -1 \le \sqrt{\frac{\frac{1}{2}}{\frac{1}{2}-\frac{C_1+C_2}{m}}} -1 \lesssim \frac{1}{m}.
 \end{align*}
 \end{proof}


\subsection{Patches along bifurcation curves}\label{proofoftheorem3}
This subsection is devoted to the proof of Theorem~\ref{largem}. Since we are interested in a curve $\mathscr{C}_m$ that satisfies (A1)-(A4), we will make the following assumptions for the patches throughout this subsection.

\begin{itemize}
\item[(a)] $D$ is star-shaped, that is $\partial D = \left\{ (1+u(\theta))(\cos\theta,\sin\theta) : \theta\in \mathbb{T}\right\}$ for some $u\in C^{2}(\mathbb{T})$. 
\item[(b)] $u$ is even and  $\frac{2\pi}{m}$-periodic for some $m\ge 3$, that is,  $u(-\theta)=u(\theta)$ and $u(\theta+\frac{2\pi}{m})=u(\theta)$.
\item[(c)] $\partial_\theta u(\theta) < 0$ for all $\theta \in (0,\frac{\pi}{m})$. 
\end{itemize}

For such a patch, we denote $r_{min}:=\min_{x\in \partial D}|x| = u(\frac{\pi}{m})$ and $r_{max}:=\max_{x\in \partial D}|x| = u(0)$. Furthermore, we denote  $\eta :=(1+u)^{-1} : (r_{min},r_{max}) \mapsto (0,\frac{\pi}{m})$. By the symmetry, we only need to focus on the fundamental sector $S:=\left\{ (r,\theta) : r\ge 0, \ \theta\in (0,\frac{\pi}{m})\right\}$.  See Figure~\ref{examplepatch} for an illustration of these definitions.

 \begin{figure}[htbp]
\begin{center}
    \includegraphics[scale=0.55]{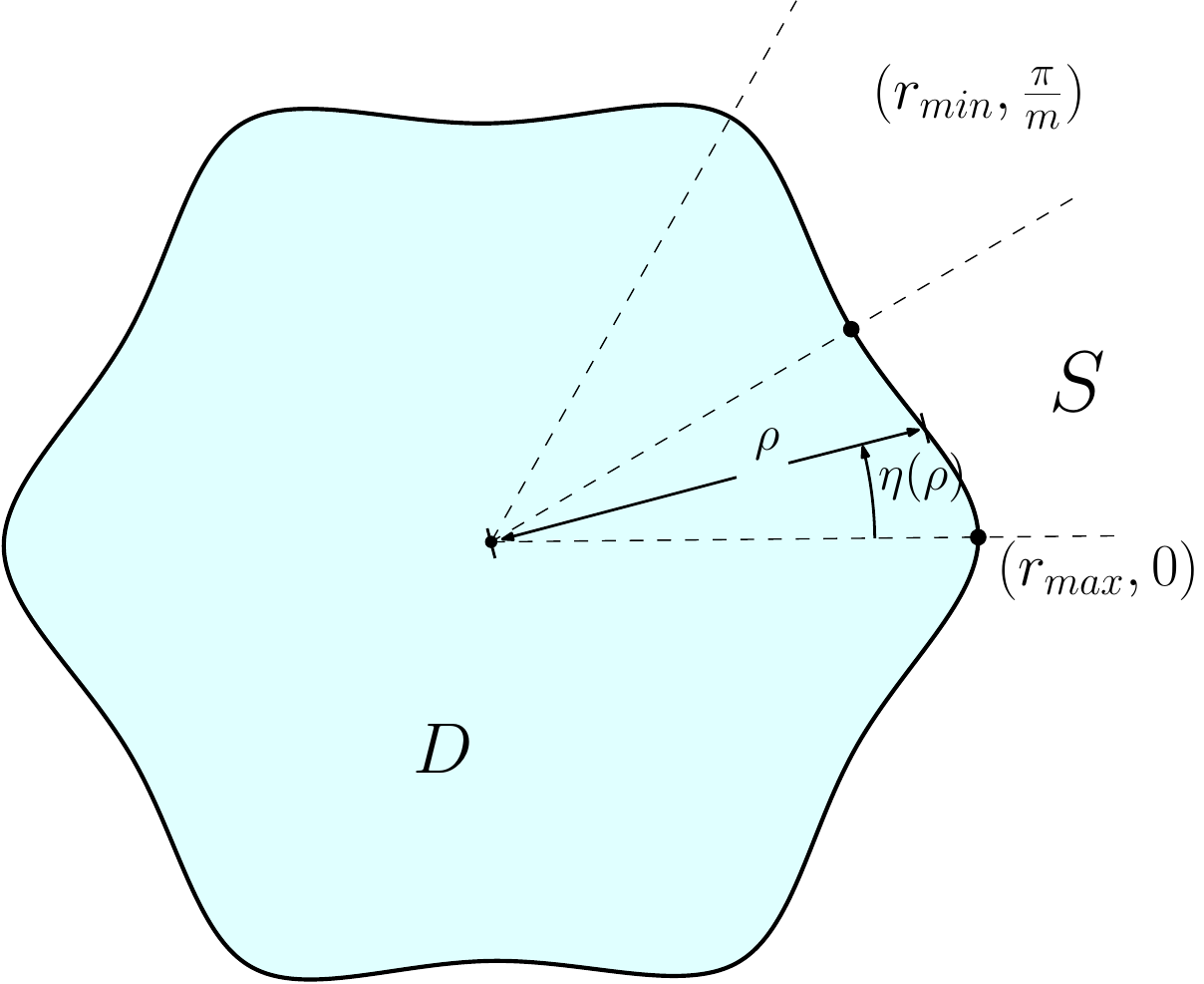}
    \caption{Illustration of the definitions of $r_{min}$, $r_{max}$, $\eta(\rho)$ and $S$ on a $6$-fold vortex patch}
   \label{examplepatch}
    \end{center}

\end{figure}

  Note that we will establish several lemmas with assuming $|D| = |B| = \pi$. Certainly this is not satisfied by the solutions on the curve $\mathscr{C}_m$ but we will resolve this issue in the proof of the theorem . 
  
   Our proof for Theorem~\ref{largem} relies on Theorem~\ref{largem2}. Roughly speaking, we will show that if $\rVert u \rVert_{L^{\infty}(\mathbb{T})}$ is large compared to $\frac{1}{m}$, then $\lambda$ $( =\frac{1}{2}-\Omega)$ must be large enough to contradict Theorem~\ref{largem2}. However, the main difficulty comes from the fact that lower bounds for $\lambda$ that we can derive from the identities \eqref{firstvariation3} and \eqref{firstvariation2} are not comparable with  $\rVert u \rVert_{L^{\infty}}$ (Lemma~\ref{lambdaestimate}). Thus, the scenario that we want to rule out is that for large $m$, $\partial D$ is so spiky that $\int_{\mathbb{T}}|u|^2d\theta$ is small while $\rVert u\rVert_{L^{\infty}}$ is  relatively large.

Since $r_{\max}$ can be estimated as in Corollary~\ref{rmaxbound}, we will mainly focus on estimating $r_{min}$. Using the identity~\eqref{variationalidentity2}, we derive a lower bound for $\lambda$ in the next lemma.

 \begin{lemma}\label{lambdaestimate}
  If $(D,\Omega)$ is a solution to \eqref{rotatingpatch} with $|D| = \pi$ then 
 \begin{align*}
 \lambda \gtrsim \frac{\int_{\mathbb{T}}|u|^2d\theta}{r_{max}\rVert u\rVert_{L^{\infty}(\mathbb{T})}}.
 \end{align*}
 \end{lemma}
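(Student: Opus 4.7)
The plan is to massage identity~\eqref{variationalidentity2} into the form
\[
\lambda \;=\; \frac{\Omega\,\bigl(\int_D |x|^2\,dx - |D|^2/(2\pi)\bigr)}{2\,\bigl(|D|^2/(4\pi) - \int_D p\,dx\bigr)},
\]
and then (i) lower bound the numerator factor by $\int_{\mathbb{T}} |u|^2\,d\theta$, (ii) upper bound the denominator factor by $\|u\|_{L^\infty(\mathbb{T})}$, and (iii) dispose of the remaining factor $\Omega$ via a short case split to recover the $r_{\max}$ in the denominator.

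For (i), in polar coordinates $\partial D = \{(1+u(\theta))e^{i\theta}\}$ the area constraint $\tfrac12\int(1+u)^2\,d\theta=\pi$ says that $v(\theta) := (1+u)^2 - 1 = u(2+u)$ has zero mean, and a direct expansion gives
\[
\int_D |x|^2\,dx - \frac{|D|^2}{2\pi} \;=\; \tfrac14 \int_{\mathbb{T}} v(\theta)^2\,d\theta.
\]
Because $u > -1$ forces $2+u > 1$, one has $|v|\ge |u|$ pointwise, so this quantity is bounded below by $\tfrac14\int_{\mathbb{T}} |u|^2\,d\theta$ with no smallness assumption on $u$.

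For (ii), star-shapedness gives $B_{r_{\min}}\subset D$, so $w(x):=(r_{\min}^2-|x|^2)/2$ extended by zero lies in $H^1_0(D)$. Inserting $w$ into the variational formula $\int_D p\,dx = \sup_{w\in H^1_0(D)}\bigl\{2\int w\,dx - \tfrac12\int|\nabla w|^2\,dx\bigr\}$ (which is equivalent to minimising the Dirichlet energy for $-\Delta p=2$) and computing explicitly yields $\int_D p\,dx \geq \pi r_{\min}^4/4$. Hence
\[
\frac{|D|^2}{4\pi} - \int_D p\,dx \;\leq\; \frac{\pi(1-r_{\min}^4)}{4} \;\lesssim\; \|u\|_{L^\infty(\mathbb{T})},
\]
where I used $r_{\min}\leq 1$ (forced by the area constraint) together with $1-r_{\min} = -u(\pi/m) \leq \|u\|_{L^\infty(\mathbb{T})}$.

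Combining (i) and (ii) yields $\lambda \gtrsim \Omega\,\int_{\mathbb{T}} |u|^2\,d\theta/\|u\|_{L^\infty(\mathbb{T})}$. To finish, I split cases on $\lambda$. If $\lambda \geq \tfrac14$ then $\lambda \gtrsim 1$, and since $\int |u|^2\,d\theta \leq 2\pi\|u\|_{L^\infty}^2$ together with the trivial bound $\|u\|_{L^\infty}\leq r_{\max}$ (which holds because $r_{\max}\geq 1$ by the area constraint and $r_{\min}>0$) shows the right-hand side of the claim is at most a universal constant, so the inequality holds trivially. If $\lambda < \tfrac14$ then $\Omega > \tfrac14$ is bounded away from zero, and the above estimate gives $\lambda \gtrsim \int_{\mathbb{T}}|u|^2\,d\theta/\|u\|_{L^\infty}\geq \int|u|^2/(r_{\max}\|u\|_{L^\infty})$, again using $r_{\max}\geq 1$. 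The main obstacle is step (ii): one must produce an upper bound on the torsional defect $|D|^2/(4\pi) - \int_D p\,dx$ that is linear in $\|u\|_{L^\infty}$, and the inscribed disk $B_{r_{\min}}$ is the natural test function that delivers exactly this dependence.
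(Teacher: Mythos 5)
Your proposal is correct and reaches the conclusion, but it takes a slightly different route from the paper. The paper starts from the same identity \eqref{variationalidentity2} and then rewrites it as $\lambda\bigl(\int_D |x|^2-2p\,dx\bigr)=\int_D \tfrac{|x|^2}{2}\,dx-\tfrac{|D|^2}{4\pi}$, which eliminates the stray factor of $\Omega$ entirely and makes your final case split on $\lambda\gtrless\tfrac14$ unnecessary. For the numerator you produce the exact identity $\int_D|x|^2\,dx-\tfrac{|D|^2}{2\pi}=\tfrac14\int_{\mathbb{T}} v^2\,d\theta$ with $v=(1+u)^2-1$ and then use $|v|\ge|u|$; this is a clean improvement over the paper's citation of \eqref{secondmoment2}, since your bound requires no smallness of $\|u\|_{L^\infty}$ while \eqref{secondmoment2} is only stated for $\|u\|_{L^\infty}<\tfrac12$. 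For the torsion term the paper deduces $p(x)\ge\tfrac12(r_{\min}^2-|x|^2)$ on $B_{r_{\min}}$ from the maximum principle (comparison with the inscribed ball's torsion function), whereas you insert the same function as a test function in the dual variational characterization of $\int_D p$; the resulting bound $\int_D p\,dx\ge \pi r_{\min}^4/4$ is identical. One small bonus of your route: the paper bounds $\int_D(|x|^2-2p)\,dx$ by $\tfrac{\pi}{2}(r_{\max}^4-r_{\min}^4)$, which gives $r_{\max}^3\|u\|_{L^\infty}$ rather than the claimed $r_{\max}\|u\|_{L^\infty}$; this is harmless wherever the lemma is applied (since $r_{\max}$ is uniformly bounded there), but your version, which only needs $r_{\max}\ge 1$ to absorb the extra factor, sidesteps the issue altogether.
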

 
 \begin{proof}
 we use \eqref{variationalidentity2} in Lemma~\ref{firstvariations} to obtain
\begin{align*}
\lambda\left(\int_D |x|^2-2p(x)dx\right) = \left( \int_D \frac{|x|^2}{2}dx - \frac{|D|^2}{4\pi}\right).
\end{align*}

For a moment, let us assume that
\begin{align}\label{xandpdifference}
\int_D |x|^2 -2p(x)dx \lesssim r_{max}\rVert u\rVert_{L^{\infty}(\mathbb{T})}.
\end{align}
Then it follows from \eqref{secondmoment2} that
\begin{align*}
\lambda = \frac {\int_D \frac{|x|^2}{2}dx - \frac{|D|^2}{4\pi}}{\int_D |x|^2-2p(x)dx} \gtrsim \frac{\int_{\mathbb{T}}|u|^2d\theta}{r_{max}\rVert u\rVert_{L^\infty}(\mathbb{T})},
\end{align*}
which implies the desired result. 

Now let us prove \eqref{xandpdifference}. Note that $p(x) \ge \frac{r_{min}^2 - |x|^2}{2}$ in $B_{r_{min}}$. Indeed, $p-\frac{r_{min}^2 - |x|^2}{2}$ is harmonic in $B_{r_{min}}$ and non-negative on $\partial B_{r_{min}}$ since $p$ is non-negative in $D$. Therefore the inequality follows from the maximum principle.  
From this,  we obtain
\begin{align*}
\int_{D}2p(x)dx \ge \int_{B_{r_{min}}}r_{min}^2 - |x|^2 dx = \frac{|B_{r_{min}}|^2}{2\pi}.
\end{align*}
Since $\int_{D}|x|^2dx < \int_{B_{r_{max}}}|x|^2dx = \frac{|B_{r_{max}}|^2}{2\pi}$, it follows that
\begin{align*}
\int_{D}|x|^2 - 2p(x)dx = \frac{1}{2\pi}\left( |B_{r_{max}}|^2 - |B_{r_{min}}|^2 \right) \lesssim r_{max}\rVert u \rVert_{L^{\infty}(\mathbb{T})},
\end{align*}
which proves \eqref{xandpdifference}.
 \end{proof}
 
  Thanks to Lemma~\ref{lambdaestimate}, we only need to rule out the case where $\int_{\mathbb{T}} |u|^2 d\theta$ is too small, compared to $\rVert u \rVert_{L^{\infty}}$. To this end, we will pick $r_{1}$, and  $r_{2}$ so that $r_{min} < r_1 < r_2 < 1$ and find a lower bound for  $\frac{\pi}{m} - \eta(r_2)$ by showing that $|u'(\theta)|$ is bounded from above for $1+u(\theta) \in (r_{min},r_1)$.   Since the relative stream function $\Psi$ is constant on $\partial D$, we have $\frac{d}{d\theta}\left(\Psi((1+u(\theta)),\theta)\right) =0$. Therefore \eqref{decomp} yields that
  \begin{align}\label{derivativeofu112}
  u'(\theta) = -\frac{\partial_\theta\Psi(r,\theta)}{\partial_r\Psi(r,\theta)} = - \frac{\partial_{\theta}\varphi_m(r,\theta)}{\left(\partial_r \varphi^r(r)-\Omega r\right) + \partial_r\varphi_m(r,\theta)},\ \text{ where }r=1+u(\theta).
  \end{align}
 In the next two lemmas, we will estimate the denominator and numerator in \eqref{derivativeofu112} but the proofs will be postponed to the end of this subsection.

 \begin{lemma}\label{boundforvarphi2} 
Let $(D,\Omega)$ be a solution to \eqref{rotatingpatch} that satisfies the assumptions (a)-(c) for some $m\ge 3$ and $|D| = \pi$. Let $r_1$, $r_2 >0$ be such that $r_{min} < r_1 < r_2 <1$ and let $\delta := \frac{\pi}{m}-\eta(r_2)$. Then there exist constants $c,$ $C>0$ such that if $\rVert u\rVert_{L^{\infty}(\mathbb{T})} \le \frac{1}{2}$, it holds that 
 \begin{align*}
 \partial_r \varphi^r(r) - \Omega r \ge m\delta\left( c\frac{(1-r_2)^2}{\rVert u\rVert_{L^{\infty}(\mathbb{T})}} - C\left(r_1-r_{min}\right)\right),
 \end{align*}
 for all $r\in (r_{min},r_1)$.
 \end{lemma}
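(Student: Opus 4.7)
The plan is to derive an exact formula for $\partial_r\varphi^r(r)-\Omega r$ in terms of $u$, then use Lemma~\ref{lambdaestimate} to lower bound the main term $\lambda r$ and dispose of the correction by elementary estimates.

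First I would exploit the $m$-fold symmetry together with Lemma~\ref{derivatives} to write, for $r\in(r_{min},r_{max})$,
\[
|D\cap B_r| = m\int_0^{\pi/m}\min\bigl((1+u(\theta))^2,\ r^2\bigr)\,d\theta,
\]
while the normalization $|D|=\pi$ reads $m\int_0^{\pi/m}(1+u)^2\,d\theta=\pi$. Substituting $\Omega=\tfrac12-\lambda$ and writing $\pi r^2 = m\int_0^{\pi/m}r^2\,d\theta$, these identities combine (splitting the integrand at $\theta=\eta(r)$) into
\[
\partial_r\varphi^r(r)-\Omega r \ =\ \lambda r \ -\ \frac{m}{2\pi r}\int_{\eta(r)}^{\pi/m}\bigl[r^2-(1+u(\theta))^2\bigr]\,d\theta,
\]
and on $(\eta(r),\pi/m)$ the integrand is non-negative since $1+u(\theta)\le r$.

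Next I would lower bound $\lambda r$ by combining Lemma~\ref{lambdaestimate} with a pointwise bound on $u$. Since $\|u\|_{L^\infty}\le\tfrac12$, we have $r\ge r_{min}\ge 1-\|u\|_{L^\infty}\ge\tfrac12$ and $r_{max}\le\tfrac32$. For $\theta\in(\eta(r_2),\pi/m)$ the definition of $\eta$ gives $1+u(\theta)\le r_2<1$, so $|u(\theta)|\ge 1-r_2$; by evenness and $\frac{2\pi}{m}$-periodicity,
\[
\int_{\mathbb{T}}|u|^2\,d\theta \ \ge\ 2m\,(1-r_2)^2\,\delta.
\]
Plugging this into Lemma~\ref{lambdaestimate} and using the bounds on $r_{max}$ and $r$ yields
\[
\lambda r \ \ge\ \tfrac12\,\lambda \ \gtrsim\ \frac{m\,\delta\,(1-r_2)^2}{\|u\|_{L^\infty}}.
\]

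Finally, I would bound the correction term from above. For $\theta\in(\eta(r),\pi/m)$ we have $1+u(\theta)\in[r_{min},r]$, so
\[
r^2-(1+u(\theta))^2 = (r-(1+u(\theta)))(r+(1+u(\theta)))\le 2(r_1-r_{min}).
\]
By assumption (c), $\eta$ is strictly decreasing on $(r_{min},r_{max})$; since $r\le r_1<r_2$,
\[
\frac{\pi}{m}-\eta(r)\ \le\ \frac{\pi}{m}-\eta(r_1)\ <\ \frac{\pi}{m}-\eta(r_2)\ =\ \delta.
\]
Combining these with $r\ge\tfrac12$ gives
\[
\frac{m}{2\pi r}\int_{\eta(r)}^{\pi/m}\bigl[r^2-(1+u)^2\bigr]\,d\theta \ \lesssim\ m\,\delta\,(r_1-r_{min}).
\]
Adding the two estimates yields the stated inequality with suitable constants $c,C>0$. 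The only non-routine step is the first one, producing the clean identity for $\partial_r\varphi^r(r)-\Omega r$; once that is in hand, the two remaining estimates are direct consequences of the hypotheses $\|u\|_{L^\infty}\le \tfrac12$ and $r_{min}<r_1<r_2<1$, together with the monotonicity of $\eta$.
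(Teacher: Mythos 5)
Your proof is correct and follows essentially the same line as the paper's: both split $\partial_r\varphi^r(r)-\Omega r$ into $\lambda r$ minus a nonnegative correction (your $\theta$-integral is exactly $\tfrac{|B_r\setminus D|}{2\pi r}$, which the paper instead computes by Fubini over circles $\partial B_\rho$), lower-bound $\lambda$ via Lemma~\ref{lambdaestimate} together with $\int_{\mathbb{T}}|u|^2\,d\theta\ge 2m\delta(1-r_2)^2$, and upper-bound the correction by $m\delta(r_1-r_{min})$ using the monotonicity of $\eta$ and $r\ge r_{min}\ge\tfrac12$.
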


\begin{lemma}\label{derivativeestimates}
Let $D$ be a patch that satisfies the assumptions (a)-(c) for some $m\ge 3$. Let  us pick $r_1$, $r_2>0$ so that $r_{min} < r_1 < r_2<1$ and $r_1^2 \le r_{min}r_2$. If $\delta:=\frac{\pi}{m}-\eta(r_2) < \frac{\pi}{4m}$, then it holds that
\begin{align}
\partial_r \varphi_m(r,\eta(r)) \ge -\frac{cr}{1-\left(\frac{r_1}{r_2}\right)^m}\delta \label{radial1} \\
\partial_\theta \varphi_m(r,\eta(r))  \le \frac{cr^2}{1-\left(\frac{r_1}{r_2}\right)^m}\delta. \label{angular1}
\end{align}
for all $r \in \left( r_{min}, r_1\right)$, where $c$ is a universal constant that does not depend on any variables.. 
\end{lemma}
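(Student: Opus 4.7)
The plan is to expand $\varphi_m$ in a Fourier series in the angular variable, leveraging the $m$-fold symmetry and evenness from (a)--(c). Under assumption (c), for each $\rho \in (r_{min}, r_{max})$ the angular slice $\{\theta : (\rho\cos\theta,\rho\sin\theta)\in D\}$ is a single interval per fundamental period, so a direct computation gives the Fourier coefficients of $h(\rho, \theta) := 1_D(\rho\cos\theta, \rho\sin\theta) - g(\rho)$ as
\[
a_k(\rho) = \frac{2\sin(km\eta(\rho))}{\pi k} = \frac{2(-1)^{k+1}\sin(km\mu(\rho))}{\pi k}, \qquad \rho \in (r_{min}, r_{max}),
\]
where $\mu(\rho) := \pi/m - \eta(\rho)$, and $a_k \equiv 0$ outside $(r_{min}, r_{max})$. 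The key two-tier bound is $|a_k(\rho)| \le 2\min(km\delta,1)/(\pi k)$ for $\rho \le r_2$ (since $\mu(\rho) \le \delta$ there), whereas only $|a_k(\rho)| \le 2/(\pi k)$ is available on $\rho > r_2$.

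I would then plug these coefficients into the Newtonian Fourier expansion
\[
\varphi_m(r, \theta) = -\frac{1}{2} \sum_{k \ge 1} \frac{\cos(km\theta)}{km}\Bigl(r^{-km}\int_0^r \rho^{km+1}a_k\,d\rho + r^{km}\int_r^{\infty}\rho^{-km+1}a_k\,d\rho\Bigr),
\]
differentiate termwise, and substitute $\theta = \eta(r) = \pi/m - \mu(r)$ (using $\cos(km\eta(r)) = (-1)^k \cos(km\mu(r))$ and $\sin(km\eta(r)) = (-1)^{k+1}\sin(km\mu(r))$). Crucially, for $\partial_\theta \varphi_m$ the outer Fourier factor becomes $\sin(km\mu(r))$, automatically of size $\lesssim \min(km\delta, 1)$ since $\mu(r) \le \delta$ on $(r_{min}, r_1)$. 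Next I would split each $\rho$-integral at $\rho = r_2$: on the inner piece $\rho \in (r_{min}, r_2)$ the $\min(km\delta,1)/k$ bound on $|a_k|$ combined with $\sum_k \min(km\delta,1)^2/k^2 \lesssim m\delta$ produces the desired linear-in-$\delta$ bound, while on the outer piece $\rho \in (r_2, r_{max})$ the geometric decay $(r/\rho)^{km} \le (r_1/r_2)^{km}$ is uniform in $r \in (r_{min}, r_1)$ since $r_1 < r_2$, and summing against the outer factors yields the $1/(1-(r_1/r_2)^m)$ factor.

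The main obstacle is the outer-region contribution to $\partial_r \varphi_m(r, \eta(r))$, where the outer factor $\cos(km\mu(r))$ is not intrinsically small and $|a_k|$ on $\rho > r_2$ carries no $\delta$-smallness. I would address this by interchanging summation and integration and invoking the closed-form identity $\sum_{k \ge 1} \sin(km\alpha)z^{km}/k = \arctan(z^m \sin(m\alpha)/(1 - z^m\cos(m\alpha)))$. After the angle-addition $\cos(km\mu(r))\sin(km\mu(\rho)) = \frac{1}{2}[\sin(km(\mu(\rho)+\mu(r))) + \sin(km(\mu(\rho)-\mu(r)))]$ and the bound $|\arctan y| \le |y|$, the denominator $1 - (r/\rho)^m\cos(\cdot) \ge 1 - (r_1/r_2)^m$ provides the $1/(1-(r_1/r_2)^m)$ factor, while the numerator smallness is extracted by Taylor-expanding in $\mu(r) < \delta$. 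The hypothesis $r_1^2 \le r_{min}r_2$ enters when trading a factor $1/r$ appearing in the outer piece, since $r_2^2/r \le r_2^3/r_1^2$ stays bounded uniformly in $r \in (r_{min}, r_1)$, and the restriction $\delta < \pi/(4m)$ ensures that $km\mu$ stays in the small-argument regime for the lowest Fourier modes so that $\sin(km\mu) \approx km\mu$ remains quantitatively sharp there.
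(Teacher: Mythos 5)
Your outline starts from the same Fourier-series expansion the paper uses (Lemma~\ref{derivatives335}) and resums it to the $\arctan$ closed form (Lemma~\ref{series3}), so structurally you are on the same track. However, as written the argument has a genuine gap in the proof of \eqref{radial1}, precisely because for $\partial_r\varphi_m$ the outer angular factor is $\cos(km\eta(r)) = (-1)^k\cos(km\mu(r))$, which carries no $\delta$-smallness.

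On the inner piece $\rho\in(r_{min},r_2)$ your claim that $\sum_k \min(km\delta,1)^2/k^2 \lesssim m\delta$ ``produces the desired linear-in-$\delta$ bound'' works only for $\partial_\theta\varphi_m$, where both the Fourier coefficient \emph{and} the outer factor $\sin(km\mu(r))$ contribute a $\min(km\delta,1)$ factor. For $\partial_r\varphi_m$ you only have the single factor $\min(km\delta,1)$ from $|a_k(\rho)|$, and after the $\rho$-integration (which contributes roughly $r/(km)$ per mode) the resulting sum is $\frac{r}{m}\sum_k \min(km\delta,1)/k^2 \sim r\delta\log\frac{1}{m\delta}$. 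The logarithm is not removable by absolute-value estimates; the paper's treatment of its quantity $K_1$ exploits a cancellation between the contributions from $\rho<r$ and $r<\rho<r_2$ (both shifted to the common angle $\eta(r)+\eta(\rho)$ and compared via Lemma~\ref{inequality1}), which is exactly what erases the log. The hypothesis $r_1^2 \le r_{min}r_2$ enters there: it guarantees $(r_{min}/r)^m > (r/r_2)^m$ for $r\in(r_{min},r_1)$ so that the two integrals can be aligned and subtracted, not to bound any quotient like $r_2^2/r$.

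On the outer piece $\rho>r_2$ for $\partial_r\varphi_m$, Taylor-expanding in $\mu(r)<\delta$ does not by itself extract smallness: if $\phi(\alpha):=\arctan\big(\frac{(r/\rho)^m\sin(m\alpha)}{1-(r/\rho)^m\cos(m\alpha)}\big)$, the relevant quantity (after folding by $\frac{2\pi}{m}$-periodicity) is $\phi(\mu(\rho)-\mu(r))+\phi(\mu(\rho)+\mu(r))$, whose zeroth-order term $2\phi(\mu(\rho))$ is $O(1)$ because $\mu(\rho)$ is \emph{not} small for $\rho>r_2$. You can make this work as a lower bound by observing that $2\phi(\mu(\rho))\ge 0$ and controlling the second-order error, but the natural bound $|\phi''|\lesssim m^2 z/(1-z)^2$ then delivers $(1-(r_1/r_2)^m)^{-2}$ rather than the stated $(1-(r_1/r_2)^m)^{-1}$. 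The paper avoids this by splitting the outer integral further at the radius $r_3$ with $\eta(r_3)=\frac{\pi}{m}-\eta(r)$: on $(r_2,r_3)$ the integrand is nonnegative and is dropped; on $(r_3,r_{max})$ one has $\eta(\rho)<\delta$ (not $\mu(\rho)$!) small, and a first-order mean value estimate with gap $2\eta(\rho)$ gives the linear dependence on $1/(1-(r_1/r_2)^m)$. Without this extra split and the use of $\eta(\rho)$ rather than $\mu(r)$ as the small parameter, the bound \eqref{radial1} does not follow. (Your treatment of $\partial_\theta\varphi_m$, by contrast, looks essentially correct and is in the spirit of the paper's $K_3$, $K_4$, $K_5$ estimates.)
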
 
Note that the linear dependence on $\delta$ in \eqref{radial1} and \eqref{angular1} is crucial in the proof of the next lemma, since this allows us to bound $u'$ independently of $\delta$ when we plug the above bounds into \eqref{derivativeofu112}.

 Now we can rule out the scenario that $\partial D$ is too spiky inwards.
 
 \begin{lemma}\label{nospike} 
There exist $c,C>0$ and $m_1\ge3$ such that if $(D,\Omega)$ is a solution to \eqref{rotatingpatch} that satisfies the assumptions (a)-(c) for some $m\ge m_1$, $|D| = \pi$ and $\rVert u \rVert_{L^{\infty}(\mathbb{T})}\le \frac{1}{2}$, then 
\begin{align*}
 1-r_{min} \le \frac{c}{m}\quad \text{ or }\quad \int_{\mathbb{T}} |u|^2d\theta \ge C\left( 1-r_{min}\right)^2.
\end{align*}
 \end{lemma}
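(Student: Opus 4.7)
The plan is to bound $|u'(\theta)|$ uniformly on a subinterval ending at $\pi/m$ by combining the ODE identity \eqref{derivativeofu112} with Lemmas~\ref{boundforvarphi2} and~\ref{derivativeestimates}, and then use the $m$-fold symmetry to amplify this into an $L^2$ bound. Writing $s := 1 - r_{min}$, I would prove the contrapositive for the second alternative: assuming $s > c/m$ for a sufficiently large universal constant $c$ to be chosen, derive $\int_{\mathbb{T}} |u|^2\, d\theta \gtrsim s^2$. Enlarging $m_1$ as needed so Corollary~\ref{rmaxbound} applies, we have $r_{max} - 1 \leq \tilde{c}/m < s$, and hence $\|u\|_{L^{\infty}} = s$.

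I would then choose $r_2 := 1 - \beta s$ and $r_1 := \sqrt{r_{min}\, r_2}$ for a parameter $\beta \in (0,1)$ to be fixed close to $1$; these satisfy $r_{min} < r_1 < r_2 < 1$ and $r_1^2 = r_{min}\, r_2$, with $1 - r_2 = \beta s$ and, via $(r_1+r_{min})(r_1-r_{min}) = r_{min}(1-\beta)s$, the bound $r_1 - r_{min} \leq (1-\beta)s/2$. The key decay estimate is
\[
  (r_1/r_2)^m \;=\; \Bigl(\frac{1-s}{1-\beta s}\Bigr)^{m/2} \;\leq\; \Bigl(1 - \tfrac{(1-\beta)s}{2}\Bigr)^m \;\leq\; e^{-(1-\beta)ms/2},
\]
which is bounded away from $1$ once $ms \geq c$. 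The argument then splits on $\delta := \pi/m - \eta(r_2)$. In the easy range $\delta \geq \pi/(4m)$, the spike has angular width at least $\delta$ at depth $1 - r_2 = \beta s$; summing over the $m$ symmetric copies in $\mathbb{T}$ yields $\int_{\mathbb{T}}|u|^2\, d\theta \geq 2m\delta(\beta s)^2 \geq (\pi\beta^2/2) s^2$ directly, without invoking the derivative estimates.

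In the delicate range $\delta < \pi/(4m)$ both lemmas apply. Feeding the upper bounds on $|\partial_\theta\varphi_m|$ and $|\partial_r\varphi_m|$ from Lemma~\ref{derivativeestimates} and the lower bound $\partial_r\varphi^r - \Omega r \geq m\delta s\bigl(c_1\beta^2 - (C_1/2)(1-\beta)\bigr)$ from Lemma~\ref{boundforvarphi2} into \eqref{derivativeofu112}, the common factor $\delta$ cancels. This produces a $\delta$-independent pointwise bound $|u'(\theta)| \leq K$ on $(\eta(r_1),\pi/m)$ for a universal constant $K$. Since $r_1 \leq \sqrt{1-s} \leq 1 - s/2$, one has $|u(\theta)| \geq s/2$ throughout that interval, and $|u'| \leq K$ forces its angular length to be at least $(r_1 - r_{min})/K \gtrsim (1-\beta)s/K$. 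Invoking the $m$-fold symmetry once more, $\int_{\mathbb{T}}|u|^2\, d\theta \gtrsim m\cdot s \cdot s^2 = m s^3 \geq c\, s^2$, as required.

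The main obstacle I anticipate is the joint tuning of $\beta$ and $c$. For the lower bound from Lemma~\ref{boundforvarphi2} to be positive, one needs $c_1\beta^2 > (C_1/2)(1-\beta)$ with the universal constants of that lemma, which is achievable only for $\beta$ close to $1$ (as $\beta \to 1^-$ the left side tends to $c_1 > 0$ and the right to $0$). Simultaneously, $\beta$ must stay strictly less than $1$ to keep $(r_1/r_2)^m$ bounded away from $1$. Finally, $c$ must be large enough that the main term $m\delta s\gamma_1$ in the denominator of the $|u'|$-bound dominates the correction $|\partial_r\varphi_m| \lesssim \delta/(1-(r_1/r_2)^m)$, which is essentially $O(\delta)$. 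Verifying that these three requirements can be met compatibly, producing universal constants $c$ and $C$ independent of $m$ and the particular patch, is the core technical check.
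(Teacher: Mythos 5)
Your proposal is correct and follows essentially the same strategy as the paper's proof: the same contrapositive setup, the same case split on $\delta := \pi/m - \eta(r_2)$ versus $\pi/(4m)$, the direct $L^2$ bound in the easy case, and in the hard case the same plan of feeding Lemmas~\ref{boundforvarphi2} and~\ref{derivativeestimates} into \eqref{derivativeofu112} to obtain a $\delta$-free bound on $|u'|$, which is then integrated against $m$-fold symmetry. The only difference is in the choice of $r_1, r_2$: the paper fixes $1 - r_2 = (1-r_{min})/3$ and $r_1 - r_{min} = 2/m$, using largeness of $m(1-r_{min})$ to make the Lemma~\ref{boundforvarphi2} lower bound positive, whereas you take $1 - r_2 = \beta(1-r_{min})$ with $\beta$ close to $1$ and $r_1 = \sqrt{r_{min}r_2}$ so that $r_1 - r_{min} \sim (1-\beta)(1-r_{min})$, and make the lower bound positive by tuning $\beta$. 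Both tunings are compatible with the universal-constant requirement, and your version has the small advantage of satisfying $r_1^2 = r_{min}r_2$ exactly rather than needing a separate verification; the paper's version gives the slightly cleaner conclusion $-u' \le 1$ rather than $|u'| \le K$. No gap.
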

 \begin{proof}
  Thanks to Lemma~\ref{boundforvarphi2} and ~\ref{derivativeestimates}, we can choose $c_1$ and $c_2>0$ such that if $r_{min}<r_1<r_2<1$ and $r_1^2\le r_{min}r_2$, then
 \begin{subequations}\label{derivatives223}
\begin{alignat}{2}
 \partial_r\varphi_r(r) - \Omega r \ge m\delta\left( c_1\frac{(1-r_2)^2}{\rVert u \rVert_{L^{\infty}(\mathbb{T})}}-c_2\left(r_1-r_{min}\right)\right),\\
 \partial_r \varphi_m(r,\eta(r)) \ge - \frac{c_2}{1-\left(\frac{r_1}{r_2}\right)^m}\delta,\\
  \partial_\theta \varphi_m(r,\eta(r)) \le \frac{c_2}{1-\left(\frac{r_1}{r_2}\right)^m}\delta,
 \end{alignat}
 \end{subequations}
for $r\in (r_{min},r_1)$. We will pick
\begin{align}\label{numbers11232}
c:= \max\left\{12, \frac{24c_2}{c_1}\right\},  \quad  C:=   \frac{\pi}{18} \quad \text{ and }\quad m_1:= 2c
\end{align}
Now let us assume that $D$ is a solution to \eqref{rotatingpatch} that satisfies the assumptions (a)-(c)  for some $m\ge m_1$, $|D| = \pi$ and $1-r_{min} =\rVert u \rVert_{L^{\infty}(\mathbb{T})} < \frac{1}{2}$. If $1-r_{min} \le \frac{c}{m}$, then there is nothing to prove, thus let us assume that 
\begin{align}\label{asmpt1231}
1-r_{min}> \frac{c}{m}. 
\end{align}
 Let $\tilde{c}:=m(1-r_{min}) $ so that $1 - r_{min} = \frac{\tilde{c}}{m}$. From \eqref{asmpt1231}, we have 
 \begin{align}\label{asmpt1232}
 \tilde{c} > c.
 \end{align}
  We choose 
 \begin{align}\label{choiceforr112}
 r_{1}:= 1-\frac{\tilde{c}-2}{m} \quad \text{ and }\quad r_2:=1-\frac{\tilde{c}}{3m}.
 \end{align}
 And we consider two cases: $\frac{\pi}{m} - \eta(r_2) \ge \frac{\pi}{4m}$ and  $\frac{\pi}{m} - \eta(r_2) < \frac{\pi}{4m}$.
 
 \textbf{Case1.} Let us assume that $\frac{\pi}{m} - \eta(r_2) \ge \frac{\pi}{4m}$. 

Since $\frac{\pi}{m}-\eta(r_2) \ge \frac{\pi}{4m}$, it follows from the monotonicity of $u$ that $u(\theta) < r_{2}-1 = -\frac{\tilde{c}}{3m}$ for $\theta \in \left( \frac{3\pi}{4m}, \frac{\pi}{m}\right)$. Using $m$-fold symmetry of $u$, we obtain
\begin{align}\label{case1result}
\int_{\mathbb{T}} |u|^2 d\theta \ge 2m\int_{0}^{\frac{\pi}{m}} |u|^2 d\theta > 2m\int_{\frac{3\pi}{4m}}^{\frac{\pi}{m}} |u|^2 d\theta \ge \frac{\pi\tilde{c}^2}{18m^2} = C(1-r_{min})^2,
\end{align}
where the last equality follows from \eqref{numbers11232}, which says $C= \frac{\pi}{18}$.
 
 \textbf{Case2.} Now we assume $\frac{\pi}{m}-\eta(r_2) < \frac{\pi}{4m}$.
 
 We first check whether $r_1$ and $r_2$ in \eqref{choiceforr112} satisfy the hypotheses in Lemma~\ref{derivativeestimates}. Clearly $r_{min} < r_{1} < {r_2} < 1$, since $\tilde{c} > c \ge 4$,  which follows from \eqref{numbers11232} and \eqref{asmpt1232}.   To show $r_{1}^2 \le r_{min}r_2$, we compute
\begin{align*}
r_{1}^2 - r_{min}r_2 &= \left( 1-\frac{\tilde{c}-2}{m}\right)^2 - \left(1-\frac{\tilde{c}}{m}\right)\left( 1-\frac{\tilde{c}}{3m}\right) \\
&  = -\frac{2\tilde{c}}{3m}+\frac{4}{m}+\frac{2\tilde{c}^2-12\tilde{c}+12}{3m^2}\\
& \le -\frac{\tilde{c}}{3m}+\frac{4}{m}+\frac{-12\tilde{c}+12}{3m^2} \\
& \le 0,
\end{align*}
where the first ineqiality follows from $\frac{\tilde{c}}{m} = 1-r_{min} = \rVert u \rVert_{L^{\infty}(\mathbb{T})} \le \frac{1}{2}$, and the last inequality follows from $\tilde{c} > 12$.

Since $\partial D$ is a level set of $\Psi$, $\Psi(1+u(\theta),\theta) = $ does not depend on $\theta$. Therefore, 
\begin{align*}
-u'(\theta) = \frac{\partial_{\theta}\Psi(1+u(\theta),\theta)}{\partial_r\Psi(1+u(\theta),\theta)} = \frac{\partial_\theta \varphi_m(1+u(\theta),\theta)}{\partial_r \varphi^r(1+u(\theta))-\Omega r+\partial_r\varphi_m(1+u(\theta),\theta)}.
\end{align*}
Hence, it follows from \eqref{derivatives223} that
\begin{align}\label{derivativeofu}
-u'(\theta) \le \frac{\frac{1}{1-\left(\frac{r_1}{r_2}\right)^m}}{\frac{c_1}{c_2}m\frac{(1-r_2)^2}{\rVert u \rVert_{L^{\infty}(\mathbb{T})}} - m\left( r_1 - r_{min}\right) - \frac{1}{1-\left(\frac{r_1}{r_2}\right)^m}} \quad \text{ for } \quad (1+u(\theta)) < r_1.
\end{align}

Now we assume for a moment that
\begin{align}
\frac{\frac{c_1}{c_2}m(1-r_2)^2}{3\rVert u\rVert_{L^{\infty}}} \ge \max\left\{ m(r_1 - r_{min}), \frac{1}{1-\left(\frac{r_1}{r_2}\right)^m}\right\}.\label{claimforuprime}
\end{align}
Then \eqref{derivativeofu} yields that
\begin{align*}
0\le -u'(\theta) \le1 \quad \text{ for } \quad (1+u(\theta))<r_1,
\end{align*}
which implies that
\begin{align}\label{eqn1}
\frac{2}{m} = r_1 - r_{min} = \int_{\eta(r_{1})}^{\frac{\pi}{m}} -u'(\theta)d\theta \le \frac{\pi}{m}-\eta(r_1).
\end{align}
Furthermore, monotonicity of $u$ implies that $|u(\theta)| > |r_1 - 1 | > |r_2-1|$ for $\theta \in \left( \eta(r_1),\frac{\pi}{m}\right)$. Therefore we obtain that
\begin{align}\label{case2result}
\int_{\mathbb{T}} |u|^2d\theta > 2m\int_{\eta(r_1)}^{\frac{\pi}{m}}|u|^2d\theta \ge 2m\left(\frac{\pi}{m}-\eta(r_1)\right) (1-r_2)^2 \ge \frac{4\tilde{c}^2}{9m^2}= \frac{4}{9}\left(1-r_{min}\right)^2 \ge C(1-r_{min})^2,
\end{align}
where the third inequality follows from \eqref{choiceforr112} and \eqref{eqn1} and the last inequality follows from $C = \frac{\pi}{18} < \frac{4}{9}$. Thus the desired result follows from \eqref{case1result} and \eqref{case2result}.

 To complete the proof, we need to show \eqref{claimforuprime}. It follows from  \eqref{choiceforr112} that 
 \begin{align}\label{eqn3}
 \frac{\frac{c_1}{c_2} m \left(1-r_2\right)^2}{3\rVert u \rVert_{L^{\infty}(\mathbb{T})}} =\frac{\frac{c_1}{c_2} m \left(1-r_2\right)^2}{3(1-r_{min})}  = \frac{c_1\tilde{c}}{12c_2} \ge 2,
 \end{align}
 where the last inequality follows from \eqref{numbers11232} and \eqref{asmpt1232} which imply that $\tilde{c} \ge \frac{24c_2}{c_1}$. We also have
 \begin{align}\label{eqn332}
 m\left(r_1 - r_{min}\right) =  2,
 \end{align}
 which follows from \eqref{choiceforr112}. 
 To estimate $\frac{1}{1-\left(\frac{r_1}{r_2}\right)^m}$, let us use an elementary inequality that
 for any $0<b<a<m$, it holds that
 \begin{align}\label{smallclaim}
 \left(\frac{1-\frac{a}{m}}{1-\frac{b}{m}}\right)^{m}\le e^{-(a-b)}.
 \end{align}
 Indeed, by taking logarithm in the left-hand  side, we can compute,
 \begin{align*}
 \log\left( \left(\frac{1-\frac{a}{m}}{1-\frac{b}{m}}\right)^{m}\right) = m\log\left(1-\frac{a-b}{m-b}\right) \le m\log\left(1-\frac{a-b}{m} \right) \le - (a-b),
 \end{align*}
 where the last inequality follows from $\log(1-x)<-x$ for all $x>0$. This proves \eqref{smallclaim}. Then we use \eqref{choiceforr112} and obtain
 \begin{align}\label{eqn337}
 \frac{1}{1-\left(\frac{r_1}{r_2}\right)^m} = \frac{1}{1-\left(\frac{1-\frac{\tilde{c}-2}{m}}{1-\frac{\tilde{c}}{3m}} \right)^m} \le \frac{1}{1-e^{-(\frac{2\tilde{c}}{3}-2)}} \le 2,
 \end{align}
 where the last inequality follows from \eqref{numbers11232} and \eqref{asmpt1232}, which imply $\tilde{c}\ge 12$.

 Thus \eqref{eqn3}, \eqref{eqn332} and \eqref{eqn337}  yield 
 \begin{align}\label{eqn2}
  \frac{\frac{c_1}{c_2} m \left(1-r_2\right)^2}{3\rVert u \rVert_{L^{\infty}(\mathbb{T})}}  \ge   2 \ge \max\left\{ m\left(r_1 - r_{min}\right) ,\frac{1}{1-\left(\frac{r_1}{r_2}\right)^{m}}\right\}.
 \end{align}
 This proves \eqref{claimforuprime} and finishes the proof.
 \end{proof}

Now we can estimate $\rVert u\rVert_{L^{\infty}(\mathbb{T})}$ whose corresponding patch has area $\pi$, that is, $|D| = \pi$. 
\begin{prop}\label{areaispi}
There exist constants $c>0$ and $m_0 \ge 3$ such that if $(D,\Omega)$ is a solution to \eqref{rotatingpatch} that satisfies assumptions (a)-(c) for some $m\ge m_0$, $|D| = \pi$ and $\rVert u \rVert_{L^{\infty}(\mathbb{T})} \le \frac{1}{2}$, then
\begin{align*}
\rVert u \rVert_{L^{\infty}(\mathbb{T})} \le \frac{c}{m}.
\end{align*}
\end{prop}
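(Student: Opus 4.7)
The plan is to write $\|u\|_{L^{\infty}(\mathbb{T})} = \max\{r_{max}-1,\, 1-r_{min}\}$ (which follows directly from assumptions (a)--(c), since $u$ attains $r_{max}-1$ at $\theta=0$ and $-(1-r_{min})$ at $\theta=\pi/m$) and to bound the two pieces separately. The upper piece is immediate from Corollary~\ref{rmaxbound}: choosing $m_0$ large enough so that the corollary applies gives $r_{max}-1 \lesssim 1/m$. Hence the work reduces to controlling $1-r_{min}$.

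For the lower piece I would apply Lemma~\ref{nospike}, whose dichotomy says that either $1-r_{min} \lesssim 1/m$ directly, in which case there is nothing left to do, or else
\begin{align*}
\int_{\mathbb{T}} |u|^2 \, d\theta \ \gtrsim\ (1-r_{min})^2.
\end{align*}
In the latter alternative I would feed this into Lemma~\ref{lambdaestimate} to obtain
\begin{align*}
\lambda \ \gtrsim\ \frac{\int_{\mathbb{T}} |u|^2 \, d\theta}{r_{max}\,\|u\|_{L^{\infty}(\mathbb{T})}} \ \gtrsim\ \frac{(1-r_{min})^2}{r_{max}\,\|u\|_{L^{\infty}(\mathbb{T})}}.
\end{align*}
At this point I would argue by cases on which term realizes $\|u\|_{L^{\infty}(\mathbb{T})}$: if $\|u\|_{L^{\infty}(\mathbb{T})} = r_{max}-1$ then Corollary~\ref{rmaxbound} already delivers the conclusion, whereas if $\|u\|_{L^{\infty}(\mathbb{T})} = 1-r_{min}$ the display collapses to $\lambda \gtrsim (1-r_{min})/r_{max}$. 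Combining with the bound $r_{max} \le 1 + c/m \le 2$ (Corollary~\ref{rmaxbound}, for $m_0$ large), one gets $\lambda \gtrsim 1-r_{min}$.

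The proof then finishes by invoking Theorem~\ref{largem2}, which yields $\lambda \le c/m$, so $1-r_{min} \lesssim 1/m$ in this remaining sub-case. Choosing $m_0$ as the maximum of the thresholds demanded by Corollary~\ref{rmaxbound}, Lemma~\ref{nospike}, and Theorem~\ref{largem2}, and $c$ as an appropriate combination of the constants produced by these three results, completes the argument. I do not anticipate any substantial obstacle here: the analytic content is entirely packaged in the preceding lemmas, and the present proposition is a clean bookkeeping combination of them. The only delicate point is the case analysis on which of $r_{max}-1$ and $1-r_{min}$ realizes $\|u\|_{L^{\infty}(\mathbb{T})}$, needed to cancel the $\|u\|_{L^{\infty}}$ in the denominator coming from Lemma~\ref{lambdaestimate}.
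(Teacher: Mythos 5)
Your proposal is correct and is essentially the paper's own argument: the same four ingredients (Theorem~\ref{largem2}, Corollary~\ref{rmaxbound}, Lemma~\ref{lambdaestimate}, Lemma~\ref{nospike}) are combined in the same chain of estimates, the only stylistic difference being that the paper argues by contradiction (suppose $\|u\|_{L^\infty}>c/m$, which with Corollary~\ref{rmaxbound} forces $\|u\|_{L^\infty}=1-r_{min}$, then derive a contradiction with Theorem~\ref{largem2}). One small ordering point worth tightening: the paper's proof of Lemma~\ref{nospike}, and its restatement as (B4) in the proof of this proposition, actually invoke the identity $\|u\|_{L^\infty}=1-r_{min}$ (it enters in \eqref{eqn3}), so the case-split on which of $r_{max}-1$ and $1-r_{min}$ realizes $\|u\|_{L^\infty}$ should be made \emph{before} applying Lemma~\ref{nospike} rather than after, as the paper's contradiction framing automatically does.
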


\begin{proof}
In order to use the previous lemmas, let us fix some constants. We fix constants $c_i's$ and $m_1$ so that if $(D,\Omega)$ is a solution to \eqref{rotatingpatch} that satisfies the assumptions (a)-(c) for some $m\ge m_1$ and $|D| = \pi$, then 
\begin{itemize}
\item[(B1)] (From Theorem~\ref{largem2}) $\lambda \le \frac{c_1}{m}$.
\item [(B2)] (From Corollary~\ref{rmaxbound}) $r_{max}-1 \le \frac{c_2}{m}$.
\item[(B3)] (From Lemma~\ref{lambdaestimate}) $\lambda \ge c_3\frac{\int_{\mathbb{T}}|u|^2d\theta}{r_{max}\rVert u \rVert_{L^{\infty}(\mathbb{T})}}$.
\item[(B4)] (From Lemma~\ref{nospike}) if $1-r_{min}=\rVert u \rVert_{L^{\infty}(\mathbb{T})}\le \frac{1}{2}$ then
\begin{align*}
1-r_{min} \le \frac{c_4}{m} \quad \text{ or }\quad \int_{\mathbb{T}}|u|^2d\theta\ge c_5(1-r_{min})^2.
\end{align*}
\end{itemize}

Let us set
\begin{align}\label{numbers1123}
c:=\max\left\{ c_2, c_4, \frac{2c_1}{c_3c_5}\right\}+1, \quad \text{ and }\quad m_0 :=\max\left\{ m_1, 2c \right\}+1.
\end{align}
Then we will prove that if $(D,\Omega)$ is a solution to \eqref{rotatingpatch} and satisfies the assumptions (a)-(c) for some $m\ge m_0$, then
\begin{align}\label{goal112}
\rVert u\rVert_{L^{\infty}(\mathbb{T})} \le \frac{c}{m}.
\end{align}

 Let us assume for a contradiction that 
 \begin{align}\label{contrapositive1123}
 \rVert u \rVert_{L^{\infty}(\T)} > \frac{c}{m}.
 \end{align}
  Then we have that
  \begin{align}\label{eqn3381}
 \int_{\mathbb{T}} |u|^2d\theta \ge c_5 (1-r_{min})^2. 
 \end{align}
 Indeed, 
  \begin{align*}
 \rVert u \rVert_{L^{\infty}(\mathbb{T})} > \frac{c}{m} > \frac{c_2}{m} \ge r_{max}-1,
 \end{align*}
  where we used \eqref{numbers1123}, \eqref{contrapositive1123} and (B2), therefore $\rVert u \rVert_{L^{\infty}(\mathbb{T})} = 1-r_{min}$. Thus  (B4) and \eqref{contrapositive1123} imply \eqref{eqn3381}.
 Furthermore (B2) and \eqref{numbers1123} also imply that 
 \begin{align*} 
 r_{max} \le 1+\frac{c_2}{m}\le 2.
 \end{align*}
 Thus we use (B3) and \eqref{eqn3381} to obtain
 \begin{align*}
 \lambda \ge c_3 \frac{c_5 (1-r_{min})^2}{2\rVert u \rVert_{L^{\infty}(\mathbb{T})}} \ge \frac{c_3c_5\rVert u \rVert_{L^{\infty}(\mathbb{T})}}{2} \ge \frac{cc_3c_5}{2m} > \frac{c_1}{m},
 \end{align*}
 where the third inequality follows from \eqref{contrapositive1123} and the last inequality follows from \eqref{numbers1123}. However this contradicts (B1). 
 \end{proof}

Now we are ready to prove the main theorem of this subsection
\begin{proofthm}{largem}
Thanks to Proposition~\ref{areaispi}, we can pick $c_1$ and $m_1$ so that $2c_1 < m_1$ and if $(D,\Omega)$ is a solution to \eqref{rotatingpatch}, that satisfies the assumptions (a)-(c) for some $m\ge m_1$ and $\rVert u \rVert_{L^{\infty}(\mathbb{T})} \le \frac{1}{2}$, then 
\begin{align}\label{prop11}
\rVert u \rVert_{L^{\infty}(\T)} \le \frac{c_1}{m}.
\end{align}
Now let us consider a curve $\mathscr{C}_m$, that satisfies the properties (A1)-(A4) for some $m\ge m_1$. We will show that
\begin{align}\label{goal11}
\sup_{s\in [0,\infty)}\rVert \tilde{u}_m(s)\rVert_{L^{\infty}(\mathbb{T})} \lesssim \frac{1}{m}.
\end{align} 
To do so, let us define $\hat{u}_m(s)$ so that, 
\begin{align}\label{defofuhat}
1+\hat{u}_m(s) = \frac{\sqrt{\pi}}{\sqrt{|D^{\tilde{u}_m(s)}|}} (1+\tilde{u}_m(s)),
\end{align}
 where the definition of $D^{\tilde{u}_m(s)}$ is as in (A2). Clearly, $s \mapsto \hat{u}_m(s)$ is a continuous curve in $C^{2}(\mathbb{T})$ such that $\left| D^{\hat{u}_m(s)} \right| = \pi$. Since $\hat{u}_m(0) = 0$, it follows from the continuity of the curve and \eqref{prop11} that
\begin{align}\label{prop12}
\sup_{s\in [0,\infty)} \rVert \hat{u}_m(s)\rVert_{L^{\infty}(\mathbb{T})} \le \frac{c_1}{m}.
\end{align}
Now let us pick an arbitrary $s\in [0,\infty)$ and denote $\tilde{u}:=\tilde{u}_m(s)$ and $\hat{u}:=\hat{u}_m(s)$. Then it follows from (A1) and \eqref{defofuhat} that
\begin{align*}
0 = \int_{\mathbb{T}} \tilde{u}(\theta)d\theta  & =  2\pi \left( \frac{\sqrt{\left| D^{\tilde{u}}\right|}}{\sqrt{\pi}}-1\right) + \frac{\sqrt{\left| D^{\tilde{u}}\right|}}{\sqrt{\pi}}\int_{\mathbb{T}} \hat{u}(\theta)d\theta \\
& =  2\pi \left( \frac{\sqrt{\left| D^{\tilde{u}}\right|}}{\sqrt{\pi}}-1\right) - \frac{1}{2}\frac{\sqrt{\left| D^{\tilde{u}}\right|}}{\sqrt{\pi}}\int_{\mathbb{T}} \hat{u}(\theta)^2 d\theta,
\end{align*}
where the last equality follows from \eqref{l1equall2}. Hence \eqref{prop12} implies that
$\frac{\sqrt{|D^{\tilde{u}}|}}{\sqrt{\pi}} = 1 + O\left(\frac{1}{m^2} \right)$. Therefore \eqref{defofuhat} and \eqref{prop12} yield that
\begin{align*}
\rVert \tilde{u} \rVert_{L^{\infty}(\mathbb{T})} = \frac{\sqrt{|D^{\tilde{u}}|}}{\sqrt{\pi}} (1+\rVert \hat{u} \rVert_{L^{\infty}(\mathbb{T})}) - 1 = \rVert \hat{u} \rVert_{L^{\infty}(\mathbb{T})} + O\left(\frac{1}{m^2}\right) \lesssim \frac{1}{m},
\end{align*}
where the last equality follows from \eqref{prop12}. This proves \eqref{goal11} and the theorem.
 \end{proofthm}

\subsubsection{Proofs of Lemma~\ref{boundforvarphi2} and ~\ref{derivativeestimates}}\label{subsection321}
 \begin{prooflem}{boundforvarphi2}
 From Lemma~\ref{derivatives}, it follows that
\begin{align}\label{radialpart1123}
 \partial_r \varphi^r(r) - \Omega r = r\left( \frac{1}{2}-\Omega\right) - \frac{|B_r\backslash D|}{2\pi r} = r\lambda -\frac{|B_r \backslash D|}{2\pi r} =: J_1 - J_2,
\end{align}
where we used $\lambda = \frac{1}{2}-\Omega$. Note that we have $\rVert u \rVert_{L^{\infty}(\mathbb{T})}\le \frac{1}{2}$, therefore $J_1 \sim \lambda$ and $J_2\sim |B_r \backslash D|$.  Let us estimate $J_2$ first. Since $u$ is even and $m$-periodic, we have that for all $r\in (r_{min},r_1)$,
\begin{align*}
|B_r\backslash D| = 2m\int_{r_{min}}^{r} \left(\frac{\pi}{m}-\eta(\rho)\right)\rho d\rho \lesssim mr\delta(r-r_{min}) \le mr\delta(r_1-r_{min}),
\end{align*}
where we used $\frac{\pi}{m}-\eta(\rho) < \delta$ for $\rho <r<r_{2}$ to get the first inequality. Hence we obtain
\begin{align}\label{J2}
J_2 \lesssim m\delta\left(r_1-r_{min}\right).
\end{align} 
To estimate $J_1$, we use Lemma~\ref{lambdaestimate} and obtain
\begin{align}\label{lambdaestimate112}
\lambda \gtrsim \frac{\int_{\mathbb{T}}|u|^2d\theta}{r_{max}\rVert u \rVert_{L^{\infty}}} \gtrsim \frac{\int_{\mathbb{T}}|u|^2d\theta}{\rVert u \rVert_{L^{\infty}}},
\end{align}
where we we used $r_{max} \le 1+\rVert u \rVert_{L^{\infty}(\mathbb{T})} \lesssim 1$ to get the last inequality. From periodicity of $u$, it follows that
\begin{align}\label{lambdaestimate113}
\int_{\mathbb{T}} |u|^2d\theta = 2m\int_{0}^{\frac{\pi}{m}}|u|^2d\theta \ge 2m \int_{\eta(r_2)}^{\frac{\pi}{m}} |u|^2 d\theta \ge 2m (1-r_2)^2\left(\frac{\pi}{m}-\eta(r_2)\right) = 2m\delta(1-r_2)^2,
\end{align}
where we used $1+u(\theta) < r_2$ for $\theta\in (\eta(r_2),\frac{\pi}{m})$ by monotonicity of $u$ to get the second inequality. Hence it follows from \eqref{lambdaestimate112} and \eqref{lambdaestimate113} that 
\begin{align}\label{J1}
J_1 \gtrsim \lambda \gtrsim \frac{m\delta(1-r_2)^2}{\rVert u \rVert_{L^{\infty}(\mathbb{T})}}.
\end{align}
Thus the desired result follows from \eqref{radialpart1123}, \eqref{J2} and \eqref{J1}.
 \end{prooflem}

Now we prove Lemma~\ref{derivativeestimates}. The proof is based on the formulae given in \eqref{derivatives1}.
 \begin{prooflem}{derivativeestimates}
Let us assume that $\delta < \frac{\pi}{4m}$. We will prove \eqref{radial1} first. By monotonicity of $\eta$ (assumption (c)), it follows from Lemma~\ref{derivatives1} that for all $r\in \left( r_{min},r_1\right)$, 
 \begin{align*}
 \partial_r & \varphi_m(r,\eta(r))  \ge \frac{1}{2\pi}\int_{r_{min}}^{r} \frac{\rho}{r} \arctan{\left(\frac{(\frac{\rho}{r})^m\sin{(m(\eta(r)+\eta(\rho)))}}{1-\left(\frac{\rho}{r}\right)^m\cos{(m(\eta(r)+\eta(\rho)))}}\right)} d\rho \\
  & \ + \frac{1}{2\pi} \int_{r}^{r_{max}} \frac{\rho}{r} \left(\arctan{\left(\frac{(\frac{r}{\rho})^m\sin{(m(\eta(r)-\eta(\rho)))}}{1-\left(\frac{r}{\rho}\right)^m\cos{(m(\eta(r)-\eta(\rho)))}}\right)}- \arctan{\left(\frac{(\frac{r}{\rho})^m\sin{(m(\eta(r)+\eta(\rho)))}}{1-\left(\frac{r}{\rho}\right)^m\cos{(m(\eta(r)+\eta(\rho)))}}\right)}\right) d\rho,
 \end{align*} 
 where we used that $\sin(m(\eta(r)-\eta(\rho))) \le 0$ for $\rho < r$ so we can drop one of the integrands for free. Note that the integrand in the second integral is positive for $\rho\in (r,r_2)$, since $\sin(m(\eta(r)-\eta(\rho))) >0$ and $\sin(m(\eta(r)+\eta(\rho))) <0$ for $r<\rho<r_2$, which follows from $\frac{\pi}{m}-\eta(r_2) < \frac{\pi}{4m}$. We will use the second integrand in the second integral to cancel the first integral, that is, we have that
 \begin{align}\label{last112}
 & \partial_r  \varphi_m(r,\eta(r)) \ge \nonumber \\
 &  \frac{1}{2\pi} \left( \int_{r_{min}}^{r} \frac{\rho}{r} \arctan{\left(\frac{(\frac{\rho}{r})^m\sin{(m(\eta(r)+\eta(\rho)))}}{1-\left(\frac{\rho}{r}\right)^m\cos{(m(\eta(r)+\eta(\rho)))}}\right)} d\rho - \int_{r}^{r_2} \frac{\rho}{r}\arctan{\left(\frac{(\frac{r}{\rho})^m\sin{(m(\eta(r)+\eta(\rho)))}}{1-\left(\frac{r}{\rho}\right)^m\cos{(m(\eta(r)+\eta(\rho)))}}\right)} d\rho\right) \nonumber \\
 & + \frac{1}{2\pi} \int_{r_2}^{r_{max}} \frac{\rho}{r} \left(\arctan{\left(\frac{(\frac{r}{\rho})^m\sin{(m(\eta(r)-\eta(\rho)))}}{1-\left(\frac{r}{\rho}\right)^m\cos{(m(\eta(r)-\eta(\rho)))}}\right)}- \arctan{\left(\frac{(\frac{r}{\rho})^m\sin{(m(\eta(r)+\eta(\rho)))}}{1-\left(\frac{r}{\rho}\right)^m\cos{(m(\eta(r)+\eta(\rho)))}}\right)}\right)d\rho \nonumber \\
 & =: \frac{1}{2\pi} K_1 + \frac{1}{2\pi} K_2.
 \end{align}
 To estimate $K_1$, we use that (note that $\frac{\pi}{m}-\eta(r)<\frac{\pi}{4m}$ for $r\le r_2$)
 \begin{align*}
 \begin{cases}
 \sin(m(\eta(r)+\eta(\rho))) \ge \sin(2m\eta(r)) \quad \text{ and } \quad \cos(m(\eta(r)+\eta(\rho))) \le 1 & \text{ for }\rho\in (r_{min},r) \\
 \sin(m(\eta(r)+\eta(\rho))) \le \sin(2m\eta(r)) \quad \text{ and } \quad \cos(m(\eta(r)+\eta(\rho))) \ge \cos(2m\delta) & \text{ for }\rho\in (r,r_2),
 \end{cases}
 \end{align*}
 and obtain
 \begin{align*}
 K_1 \ge \int_{r_{min}}^{r} \frac{\rho}{r} \arctan{\left(\frac{(\frac{\rho}{r})^m\sin{(2m\eta(r))}}{1-\left(\frac{\rho}{r}\right)^m}\right)} d\rho -\int_r^{r_2} \frac{\rho}{r}\arctan{\left(\frac{(\frac{r}{\rho})^m\sin{(2m\eta(r))}}{1-\left(\frac{r}{\rho}\right)^m\cos{(2m\delta)}}\right)} d\rho.
 \end{align*}
 From the change of variables $\left(\frac{\rho}{r}\right)^m \mapsto x$ for the first integral and $\left(\frac{r}{\rho}\right)^m \mapsto x$ for the second integral, it follows  that
 \begin{align*}
 K_1 & \ge \frac{r}{m}\left(\int_{(\frac{r_{min}}{r})^m}^1 x^{-1+\frac{2}{m}}\arctan\left(\frac{x\sin(2m\eta(r))}{1-x}\right)dx -\int_{{\left(\frac{r}{r_2}\right)}^m}^{1} x^{-1-\frac{2}{m}}\arctan\left( \frac{x\sin(2m\eta(r))}{1-x\cos(2m\delta)}\right)dx\right) \\
  &\ge \frac{r}{m}\int_{\left(\frac{r}{r_2}\right)^m}^{1}x^{-1-\frac{2}{m}}\left(\arctan\left( \frac{x\sin(2m\eta(r))}{1-x}\right)-\arctan\left( \frac{x\sin(2m\eta(r))}{1-x\cos(2m\delta)}\right)\right)dx.
 \end{align*}
 where we used $\frac{r_{min}}{r} > \frac{r}{r_2}$ for $r\in (r_{min},r_1)$, which follows from $r_1^2 < r_{min}r_{2}$, and $x^{-1-\frac{2}{m}}>x^{-1+\frac{2}{m}}$  for $0<x<1$ to get the second inequality (note that the first integrand is negative). Therefore it follows from Lemma~\ref{inequality1} that (note that $\sin{(2m\eta(r))}<0$ for $r<r_1$),
 \begin{align}\label{K1bound}
 K_1 \gtrsim -\frac{r}{m} (1-\cos(2m\delta)) \gtrsim -\frac{r}{m}\sin(2m \delta) \gtrsim -r\delta,
 \end{align}
 where the second inequality is due to $1-\cos(2m\delta) < \sin(2m\delta)$ for $\delta <\frac{\pi}{4m}$. \\

 Now let us estimate $K_2$. Note that the integrand in $K_2$ is non-negative if $\eta(\rho) > \frac{\pi}{m}-\eta(r)$. Indeed, by monotonicity of $\eta$, we have $\eta(r) > \eta(\rho)$ for any $\rho \ge r_2 > r$, which implies the first integrand in $K_2$ is positive for all $\rho \in (r_2,r_{max})$. Thus, if we choose $r_3:=\eta^{-1}(\frac{\pi}{m}-\eta(r))>r_2$ then the integrand in $K_2$ is strictly positive for $\rho \in (r_2,r_3)$. Hence we have
 \begin{align*}
 K_2   \ge \int_{r_3}^{r_{max}} \frac{\rho}{r} \left(\arctan{\left(\frac{(\frac{r}{\rho})^m\sin{(m(\eta(r)-\eta(\rho)))}}{1-\left(\frac{r}{\rho}\right)^m\cos{(m(\eta(r)-\eta(\rho))}}\right)}- \arctan{\left(\frac{(\frac{r}{\rho})^m\sin{(m(\eta(r)+\eta(\rho)))}}{1-\left(\frac{r}{\rho}\right)^m\cos{(m(\eta(r)+\eta(\rho)))}}\right)}\right)d\rho.
 \end{align*}
Note that $\bigg|\partial_\theta \left( \arctan\left(\frac{x\sin(m\theta)}{1-x\cos(m\theta)}\right)\right)\bigg| \lesssim \frac{mx}{1-x}$ for all $0\le x<1$ and $\theta\in [-\frac{\pi}{m},\frac{\pi}{m}]$. Indeed,
\begin{align*}
\partial_\theta \left( \arctan\left(\frac{x\sin(m\theta)}{1-x\cos(m\theta)}\right)\right) = \frac{mx(\cos(m\theta)-1+(1-x))}{(x-1)^2+2x(1-\cos(m\theta))}.
\end{align*}
Hence either $(1-x) \le 1-\cos(m\theta)$ or $(1-x) > 1-\cos(m\theta)$, one can easily see that $\bigg| \frac{mx(\cos(m\theta)-1+(1-x))}{(x-1)^2+2x(1-\cos(m\theta))} \bigg| \lesssim \frac{mx}{1-x}$. Since we also have $\frac{r}{\rho} <\frac{r_1}{r_3}<\frac{r_1}{r_2}$ for $\rho > r_3$, and  it follows from the mean-value theorem that the integrand can be bounded as
\begin{align*}
\bigg|\arctan{\left(\frac{(\frac{r}{\rho})^m\sin{(m(\eta(r)-\eta(\rho)))}}{1-\left(\frac{r}{\rho}\right)^m\cos{(m(\eta(r)-\eta(\rho))}}\right)}- \arctan{\left(\frac{(\frac{r}{\rho})^m\sin{(m(\eta(r)+\eta(\rho)))}}{1-\left(\frac{r}{\rho}\right)^m\cos{(m(\eta(r)+\eta(\rho)))}}\right)}\bigg| & \lesssim \frac{m\eta(\rho)\left(\frac{r}{\rho}\right)^{m}}{1-\left(\frac{r}{\rho}\right)^{m}} \\
& < \frac{m\eta(\rho)\left(\frac{r}{\rho}\right)^m}{1-\left(\frac{r_1}{r_2}\right)^{m}}.
\end{align*}
Therefore we obtain
\begin{align*}
K_2 \gtrsim -\frac{1}{1-\left(\frac{r_1}{r_2}\right)^{m}}\int_{r_3}^{r_{max}}\left(\frac{r}{\rho}\right)^{m-1} m\eta(\rho)d\rho \gtrsim -m\frac{\left(\frac{\pi}{m}-\eta(r)\right)}{1-\left(\frac{r_1}{r_2}\right)^m}\int_{r_{3}}^{r_{max}}\left(\frac{r}{\rho}\right)^{m-1}d\rho \gtrsim  -\frac{rm}{(m-2)}\frac{\left(\frac{\pi}{m}-\eta(r)\right)}{1-\left(\frac{r_1}{r_2}\right)^m},
\end{align*}
where we used $\eta(\rho) < \frac{\pi}{m}-\eta(r)$ for $\rho>r_3$ to get the second inequality. Since $m\ge 3$ and $\frac{\pi}{m}-\eta(r)<\frac{\pi}{m}-\eta(r_2)=\delta$, the above inequality implies \begin{align}\label{K2bound}
K_2 \gtrsim -\frac{r}{1-\left(\frac{r_1}{r_2}\right)^m}\left(\frac{\pi}{m}-\eta(r)\right) \gtrsim -\frac{r}{1-\left(\frac{r_1}{r_2}\right)^m}\delta.
\end{align} Thus, \eqref{last112}, \eqref{K1bound} and \eqref{K2bound} yield \eqref{radial1}.\\

Now, let us prove \eqref{angular1}. Since $\sin(m\eta(\rho)) < \sin(m\delta)$ for all $\rho < r_2$, it follows from Lemma~\ref{derivatives1} that
\begin{align*}
\partial_\theta \varphi_m(r,\eta(r)) & <\frac{1}{4\pi}  \underbrace{\int_{r_{min}}^{r} \rho\log\left(1+\frac{4(\frac{\rho}{r})^m\sin^{2}(m\delta)}{(1-(\frac{\rho}{r})^m)^2}\right)d\rho}_{=:K_3} + \frac{1}{4\pi} \underbrace{\int_{r}^{r_2} \rho\log\left(1+\frac{4(\frac{r}{\rho})^m\sin^{2}(m\delta)}{(1-(\frac{r}{\rho})^m)^2}\right)d\rho}_{=:K_4}\\
& \ + \frac{1}{4\pi} \underbrace{\int_{r_2}^{r_{max}}\rho \log\left( 1 + \frac{4(\frac{r}{\rho})^m \sin(m\eta(\rho))\sin(m\delta)}{(1-(\frac{r}{\rho})^m)^2}\right)d\rho}_{=:K_5}.
\end{align*}
Again, the change of variables, $\left(\frac{\rho}{r}\right)^m\mapsto x$ and $\left(\frac{r}{\rho}\right)^m\mapsto x$ yields that
\begin{align*}
K_3 \lesssim \frac{r^2}{m}\int_0^{1} x^{-1+\frac{2}{m}}\log\left( 1+\frac{4x\sin^2(m\delta)}{(1-x)^2}\right)dx, \\
K_4 \lesssim \frac{r^2}{m}\int_0^{1} x^{-1-\frac{2}{m}}\log\left( 1+\frac{4x\sin^2(m\delta)}{(1-x)^2}\right)dx.
\end{align*}
Thus it follows from Lemma~\ref{inequality2} that
\begin{align}\label{k34bound}
K_3, \ K_4 \lesssim \frac{r^2}{m}\left|\sin(m\delta)\right| < r^2\delta.
\end{align}
To estimate $K_5$, recall that $r<r_1$ and $\log(1+x)\le x$, which yields that
\begin{align}
K_5 & \lesssim \int_{r_2}^{r_{max}}\rho\left( \frac{\left(\frac{r}{\rho}\right)^m\sin(m\eta(\rho))\sin(m\delta)}{\left(1-\left(\frac{r}{\rho}\right)^m\right)^2}\right) d\rho \nonumber\\
& = \frac{r^2}{m}\int_{\left(\frac{r}{r_{max}}\right)^m}^{\left(\frac{r}{r_2}\right)^m} x^{-1-\frac{2}{m}}\left(\frac{x\sin(m\eta(\rho))\sin(m\delta))}{(1-x)^2}\right)dx\nonumber\\
&\lesssim \frac{r^2}{m}\int_{0}^{\left(\frac{r_1}{r_2}\right)^m}x^{-\frac{2}{m}}\frac{\sin(m\delta)}{(1-x)^2}dx \nonumber \\
& = \frac{r^2}{m}\left(\int_0^{\min\left\{ \frac{1}{2},\left(\frac{r_1}{r_2}\right)^m\right\}}x^{-\frac{2}{m}}\frac{\sin(m\delta)}{(1-x)^2}dx +\int_{\min\left\{ \frac{1}{2},\left(\frac{r_1}{r_2}\right)^m\right\}}^{\left(\frac{r_1}{r_2}\right)^m}x^{-\frac{2}{m}}\frac{\sin(m\delta)}{(1-x)^2}dx \right)\nonumber\\
& \lesssim \frac{r^2}{m}\left( m\delta + \frac{\sin(m\delta)}{\left(1-\left(\frac{r_1}{r_2}\right)^m\right)}\right) \nonumber\\
&\lesssim \frac{\delta}{1-\left(\frac{r_1}{r_2}\right)^m}\label{k5bound}
\end{align}
Therefore \eqref{k34bound} and \eqref{k5bound} yield \eqref{angular1}. This finishes the proof.
\end{prooflem}

\appendix
\section{Appendix}\label{section5}
\subsection{Derivatives of the stream function}\label{ap1}
In this appendix, we will derive some formulae for zero-mean stream function by using Fourier series.
\begin{lemma}\label{streamfunction}
For $\rho>0$, let $h\in L^{2}(\partial B_\rho)$ such that $\int_{|y|=\rho}h(y)d\mathcal{H}^{1}(y) = 0$. Then it holds that for $x=\left(r\cos\theta,r\sin\theta\right)$,
\begin{align*}
\frac{1}{2\pi}\int_{|y|=\rho}h(y)\log|x-y|d\mathcal{H}^{1}(y) = 
\begin{cases}
-\sum_{n=-\infty n\ne 0}^{\infty} \frac{\rho}{2|n|}\hat{h}(\rho,n)\left(\frac{r}{\rho}\right)^{|n|}e^{in\theta}  & \text{ if } \rho\ge r, \\
-\sum_{n=-\infty n\ne 0}^{\infty} \frac{\rho}{2|n|}\hat{h}(\rho,n)\left(\frac{\rho}{r}\right)^{|n|}e^{in\theta}  & \text{ if }\rho < r,
\end{cases}
\end{align*}
where $\hat{h}(\rho,n) = \frac{1}{2\pi}\int_{\partial B} h(\rho y)e^{-iny}d\mathcal{H}^1(y)$.
\end{lemma}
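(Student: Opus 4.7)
The plan is a direct Fourier-series computation, relying on the classical logarithmic generating function to expand $\log|x-y|$ on $\partial B_\rho$ and then integrating term by term against the Fourier modes of $h$.

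First I would parametrize $y=\rho e^{i\phi}$, so that $d\mathcal{H}^{1}(y)=\rho\,d\phi$, and write $|x-y|^2=(\max(r,\rho))^2\,|1-\zeta|^2$ where $\zeta=\bigl(\min(r,\rho)/\max(r,\rho)\bigr)\,e^{\pm i(\theta-\phi)}$, with the sign depending on which of $r,\rho$ is larger. Using the standard identity $\log|1-z|=-\sum_{n\ge 1}n^{-1}\mathrm{Re}(z^n)$ for $|z|<1$, this gives the uniformly convergent expansion
$$
\log|x-y| \;=\; \log\max(r,\rho) \;-\; \sum_{n=1}^{\infty}\frac{1}{n}\left(\frac{\min(r,\rho)}{\max(r,\rho)}\right)^{n}\cos\bigl(n(\theta-\phi)\bigr),
$$
valid away from the coincidence circle $r=\rho$.

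Next I would substitute this expansion into the integral. The $\phi$-independent term $\log\max(r,\rho)$ produces $\log\max(r,\rho)\cdot\int_{\partial B_\rho}h\,d\mathcal{H}^{1}=0$ by the zero-mean hypothesis, which is precisely what kills the otherwise problematic $n=0$ contribution. For each $n\ge 1$, writing $\cos(n(\theta-\phi))=\tfrac12\bigl(e^{in\theta}e^{-in\phi}+e^{-in\theta}e^{in\phi}\bigr)$ and invoking the definition of $\hat h(\rho,\pm n)$, the integration over $\partial B_\rho$ produces
$$
\frac{\rho}{2\pi}\int_{0}^{2\pi} h(\rho e^{i\phi})\cos\bigl(n(\theta-\phi)\bigr)\,d\phi \;=\; \frac{\rho}{2}\bigl(\hat h(\rho,n)e^{in\theta}+\hat h(\rho,-n)e^{-in\theta}\bigr).
$$
Collecting the prefactor $-1/n$ and relabeling $n\mapsto -n$ in one of the two halves allows me to repackage the sum over $n\ge 1$ as a single sum over $n\in\mathbb{Z}\setminus\{0\}$ carrying the weight $\rho/(2|n|)$, yielding the two displayed formulas according to whether $\rho\ge r$ or $\rho<r$.

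The only step requiring a little care is the interchange of summation and integration. Since $h\in L^{2}(\partial B_\rho)$, the sequence $\bigl(\hat h(\rho,n)\bigr)$ is square-summable; combined with the geometric decay $\bigl(\min(r,\rho)/\max(r,\rho)\bigr)^n$ whenever $r\ne\rho$, the double sum/integral is absolutely convergent, so Fubini (or dominated convergence) applies routinely. I do not anticipate any genuine analytic obstacle here, which is consistent with the lemma being a technical ingredient placed in the appendix.
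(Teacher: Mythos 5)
Your proposal is correct and is essentially dual to the paper's argument. The paper first expands $h$ in a Fourier series and then evaluates the resulting coefficient integrals $\int_{\mathbb{T}} e^{in\eta}\log(r^2+\rho^2-2r\rho\cos(\theta-\eta))\,d\eta$ by citing \cite[Lemma A.1]{castro2019uniformly}; you instead expand the kernel $\log|x-y|$ directly via the classical generating function $\log|1-z|=-\sum_{n\ge1}n^{-1}\mathrm{Re}(z^n)$ and then read off the Fourier coefficients of $h$ against each mode. The two computations produce identical term-by-term pairings, so the final bookkeeping (relabeling $n\mapsto -n$ to assemble the two-sided sum with weight $\rho/(2|n|)$) is the same. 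The only substantive difference is that your route is self-contained: it re-derives, rather than invokes, the cited Fourier integral formula, which the generating-function expansion is in fact the standard proof of. Your attention to the zero-mean hypothesis killing the $\phi$-independent $\log\max(r,\rho)$ term and to the absolute convergence (geometric decay times $\ell^2$ coefficients) that justifies interchange of sum and integral both match the role those facts play in the paper's version, where the zero-mean is used to drop $\hat h(\rho,0)$.
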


\begin{proof}

By adapting the abuse of notation $h(y) = h(\rho,\eta)$, for $y=(\rho\cos\eta,\rho\sin\eta)$, we have that
\begin{align*}
\frac{1}{2\pi}\int_{|y|=\rho} h(y)\log|x-y|d\mathcal{H}^1(y)& = \frac{\rho}{4\pi}\int_{\mathbb{T}}h(\rho,\eta)\log|(r\cos\theta,r\sin\theta)-(\rho\cos\eta,\rho\sin\eta)|^2d\eta\\
&=\frac{\rho}{4\pi}\int_{\mathbb{T}}h(\rho,\eta) \log(r^2+\rho^2-2r\rho\cos(\theta-\eta))d\eta.
\end{align*}
Using the Fourier expansion $h(\rho,\eta) := \sum_{n=-\infty, n\ne0}^{\infty}\hat{h}(\rho,n)e^{in\eta}$ where $\hat{h}(\rho,n):=\frac{1}{2\pi}\int_{\mathbb{T}} h(\rho,\eta)e^{-in\eta}d\eta$, we have
\begin{align*}
\frac{\rho}{4\pi}\int_{\mathbb{T}}h(\rho,\eta) \log(r^2+\rho^2-2r\rho\cos(\theta-\eta))d\eta & = \sum_{n=-\infty, n\ne 0}^{\infty}\frac{\rho}{4\pi}\hat{h}(\rho,n)\underbrace{\int_{\mathbb{T}}e^{in\eta}\log(r^2+\rho^2-2r\rho\cos(\theta-\eta))d\eta}_{=:A_n(r,\rho,\theta)},
\end{align*}
where we used $\hat{h}(\rho,0)=0$ since $h$ has zero mean on $\partial B_\rho$. 
To compute $A_n$, we recall from \cite[Lemma A.1]{castro2019uniformly} that for $0\le x\le 1$ and $\mathbb{Z}\ni n\ne 0$, it holds that
\begin{align}\label{stream113}
\int_{\mathbb{T}} e^{in\eta}\log(1+x^2-2x\cos(\theta-\eta))d\eta = -\frac{2\pi}{|n|}e^{in\theta}x^{|n|}.
\end{align}
Then it directly follows from \eqref{stream113} that 
\begin{align*}
A_n(r,\rho,\theta) = 
\begin{cases}
-\frac{2\pi}{|n|}e^{in\theta}\left(\frac{r}{\rho}\right)^{|n|} & \text{ if }\rho\ge r\\
-\frac{2\pi}{|n|}e^{in\theta}\left(\frac{\rho}{r}\right)^{|n|} & \text{ if }\rho< r.
\end{cases}
\end{align*}
Plugging this into the above equation, the desired result follows immediately.

\end{proof}

\begin{lemma}\label{derivatives335}
For a bounded $m$-fold symmetric domain $D$ in $\R^2$, let us consider a decomposition of $1_D * \mathcal{N}$,
\begin{align*}
1_D * \mathcal{N}(r,\theta) = g * \mathcal{N}(r) + \left( 1_D - g \right) * \mathcal{N}(r,\theta) =: \varphi^{r}(r) + \varphi_{m}(r,\theta),
\end{align*}
where $g(r) := \frac{1}{2\pi r}\mathcal{H}^1\left(\partial B_r \cap D\right)$. Then, 
\begin{align}
\partial_r \varphi_m(r,\theta) & =\frac{1}{2\pi}\int_{\mathbb{T}}\int_0^r h(\rho,\eta+\theta)\left(\sum_{n=1}^{\infty}\left(\frac{\rho}{r}\right)^{nm+1}\cos(nm\eta)\right)d\rho d\eta \nonumber \\
&\ - \frac{1}{2\pi}\int_{\mathbb{T}}\int_{r}^{\infty}h(\rho,\eta+\theta)\left(\sum_{n=1}^{\infty}\left(\frac{r}{\rho}\right)^{nm-1}\cos(nm\eta)\right)d\rho d\eta\label{errorderivative111}\\
\partial_{\theta}\varphi_m(r,\theta) & =-\frac{r}{2\pi}\int_{\mathbb{T}}\int_0^r h(\rho,\eta+\theta)\left(\sum_{n=1}^{\infty}\left(\frac{\rho}{r}\right)^{nm+1}\sin(nm\eta)\right)d\rho d\eta \nonumber \\
&\ -  \frac{r}{2\pi} \int_{\mathbb{T}}\int_{r}^{\infty}h(\rho,\eta+\theta)\left(\sum_{n=1}^{\infty}\left(\frac{r}{\rho}\right)^{nm-1}\sin(nm\eta)\right)d\rho d\eta,\label{errorderivative112}
\end{align}
where $h(\rho,\theta):= 1_D(\rho(\cos\theta,\sin\theta))-g(\rho)$.

\end{lemma}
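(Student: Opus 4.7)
The plan is to slice the convolution $\varphi_m = (1_D - g) * \mathcal{N}$ along concentric circles, apply Lemma~\ref{streamfunction} on each circle, and then differentiate the resulting Fourier expansion. First, observe that by construction of $g(\rho) = \frac{1}{2\pi\rho}\mathcal{H}^{1}(\partial B_\rho \cap D)$, the function $h(\rho,\cdot) = 1_D(\rho(\cos\cdot,\sin\cdot)) - g(\rho)$ has zero mean on $\mathbb{T}$ for every $\rho>0$, so Lemma~\ref{streamfunction} is applicable on each circle $\partial B_\rho$. Moreover, $m$-fold symmetry of $D$ makes $h(\rho,\cdot)$ be $\frac{2\pi}{m}$-periodic in the angular variable, hence $\hat{h}(\rho,n)=0$ unless $n$ is a nonzero integer multiple of $m$.

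Writing $x = (r\cos\theta,r\sin\theta)$ and integrating in $\rho$ over $(0,\infty)$, Lemma~\ref{streamfunction} gives
\begin{align*}
\varphi_m(r,\theta) = -\!\!\sum_{n\ne 0}\!\left[\int_0^r\!\frac{\rho}{2|n|}\hat{h}(\rho,n)\!\left(\tfrac{\rho}{r}\right)^{|n|}\!d\rho +\!\int_r^\infty\!\frac{\rho}{2|n|}\hat{h}(\rho,n)\!\left(\tfrac{r}{\rho}\right)^{|n|}\!d\rho\right]\!e^{in\theta},
\end{align*}
where the sum runs effectively over $n=\pm km$, $k\ge 1$. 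I would then differentiate term by term in $r$ and $\theta$. For $\partial_r$, the Leibniz boundary contributions at $\rho=r$ from the two integrals are equal (both evaluate to $-\frac{r}{2|n|}\hat{h}(r,n)e^{in\theta}$) and cancel, yielding
\begin{align*}
\partial_r\varphi_m(r,\theta) = \sum_{n\ne 0}\tfrac{1}{2}\hat{h}(\rho,n)e^{in\theta}\!\left[\int_0^r\!\left(\tfrac{\rho}{r}\right)^{|n|+1}\!d\rho -\!\int_r^\infty\!\left(\tfrac{r}{\rho}\right)^{|n|-1}\!d\rho\right]\!,
\end{align*}
where the bracketed expression is interpreted as an inner integrand after moving the sum inside (justified for $m\ge 3$ by the decay of the geometric factors). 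The computation of $\partial_\theta$ is analogous with no boundary term at all, producing an extra factor $in$ and flipping cosines to sines after real pairing.

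To recast this in the form stated, I would pair $n=km$ with $n=-km$. Since $h$ is real, $\hat{h}(\rho,-km)=\overline{\hat{h}(\rho,km)}$, so
\begin{align*}
\hat{h}(\rho,km)e^{ikm\theta} + \hat{h}(\rho,-km)e^{-ikm\theta} = \tfrac{1}{\pi}\int_{\mathbb{T}}h(\rho,\eta+\theta)\cos(km\eta)\,d\eta,
\end{align*}
after the substitution $\phi\mapsto\eta+\theta$ in the definition of $\hat{h}$; analogously, the imaginary pairing yields $\frac{1}{\pi}\int_{\mathbb{T}}h(\rho,\eta+\theta)\sin(km\eta)\,d\eta$ (with the appropriate sign from $in$). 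Combining with the geometric factors $\left(\frac{\rho}{r}\right)^{km+1}$ and $\left(\frac{r}{\rho}\right)^{km-1}$ and rearranging the order of the $\rho$-integral, the $\eta$-integral, and the sum over $k$ gives exactly \eqref{errorderivative111} and \eqref{errorderivative112}.

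The only nontrivial point is the justification of the interchange of derivative, integral, and summation near the diagonal $\rho=r$ and near $\rho=0,\infty$. For $m\ge 3$, the geometric series $\sum_{k\ge 1}(\rho/r)^{km+1}$ and $\sum_{k\ge 1}(r/\rho)^{km-1}$ have integrable $\rho$-behavior away from $\rho=r$ (since the exponents $km\pm 1\ge 2$), so a dominated convergence argument applied to partial Fourier sums combined with the smoothness of the truncated integrals in $r$ handles everything. This mild technicality — the same one that forces the $m\ge 3$ hypothesis in Lemma~\ref{derivatives} — is the only place where one must be careful; the rest is bookkeeping.
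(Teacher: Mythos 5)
Your proposal is correct and follows essentially the same route as the paper: slice the convolution along circles, apply Lemma~\ref{streamfunction} using the zero-mean and $\frac{2\pi}{m}$-periodicity of $h(\rho,\cdot)$, differentiate term by term (noting the $\rho=r$ boundary contributions cancel in $\partial_r$), and recombine the $\pm km$ modes into the stated real trigonometric integrals via the substitution $\phi \mapsto \eta+\theta$. One minor caveat: Lemma~\ref{derivatives335} itself is stated with no restriction $m\ge 3$ — that hypothesis enters only in Lemma~\ref{derivatives} to make the \emph{bound} $|A_{21}|\lesssim m/(m-2)$ finite — so attributing the $m\ge 3$ condition to the convergence of the present formulas slightly misplaces where it is actually used.
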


\begin{proof}
We compute that for $x:=(r\cos\theta,r\sin\theta)$,
\begin{align*}
\varphi_m (r,\theta) &= \frac{1}{2\pi}\int_{\R^2} \left( 1_D(y)-g(|y|)\right)\log|x-y|dy = \int_{0}^{\infty} \left( \frac{1}{2\pi}\int_{|y|=\rho} \underbrace{\left(1_D(y)-g(|y|)\right)}_{=:h(y)}\log|x-y|d\mathcal{H}^1(y)\right) d\rho.
\end{align*}
By adapting the abuse of notation $h(y) = h(\rho,\eta)$ for $y=(\rho\cos\eta,\rho\sin\eta)$, we have $\int_{\mathbb{T}}h(\rho,\eta)d\eta=0$ for all $\rho>0$. Since $D$ is $m$-fold symmetric, we also have that $\eta \mapsto h(\rho,\eta)$ is $\frac{2\pi}{m}$-periodic function for each fixed $\rho$.  Therefore, it follows from Lemma~\ref{streamfunction} that

\begin{align*}
\varphi_m(r,\theta) := -\sum_{n=-\infty, n\ne 0}^{\infty}\frac{1}{2|nm|}\left(\int_{0}^{r}\rho\hat{h}(\rho,nm)\left(\frac{\rho}{r}\right)^{|nm|}e^{inm\theta} d\rho +\int_r^{\infty} \rho\hat{h}(\rho,nm)\left(\frac{r}{\rho}\right)^{|nm|}e^{inm\theta} d\rho\right),
\end{align*}
where $\hat{h}(\rho,nm) :=\frac{1}{2\pi} \int_{\mathbb{T}}h(\rho,\eta)e^{-inm\eta}d\eta$. Therefore we have
\begin{align}
&\partial_r \varphi_m(r,\theta) = -\sum_{n=-\infty, n\ne 0}^{\infty} \frac{1}{2}\left( -\int_0^r \hat{h}(\rho,nm)\left(\frac{\rho}{r}\right)^{|nm|+1}e^{inm\theta}d\rho + \int_r^{\infty}\hat{h}(\rho,nm)\left(\frac{r}{\rho}\right)^{|nm|-1}e^{inm\theta}d\rho\right), \label{radial112}\\
&\partial_{\theta} \varphi_m(r,\theta) = -\sum_{n=-\infty, n\ne 0 }^{\infty} \frac{in}{2|n|}r\left( \int_0^r \hat{h}(\rho,nm)\left(\frac{\rho}{r}\right)^{|nm|+1}e^{inm\theta} d\rho + \int_r^{\infty} \hat{h}(\rho,nm)\left(\frac{r}{\rho}\right)^{|nm|-1}e^{inm\theta} d\rho\right).\label{angular112}
\end{align}
To simplify the radial derivative, we use the definition of $\hat{h}$ and \eqref{radial112} to obtain
\begin{align*}
\partial_r \varphi_m(r,\theta) & =  \frac{1}{4\pi}\int_{\mathbb{T}}\int_{0}^{r}h(\rho,\eta)\left(\sum_{n\ne 0 }\left(\frac{\rho}{r}\right)^{|nm|+1}e^{inm(\theta-\eta)}\right)d\rho d\eta\\
& \  - \frac{1}{4\pi}\int_{\mathbb{T}}\int_{r}^{\infty}h(\rho,\eta)\left(\sum_{n\ne 0 }\left(\frac{r}{\rho}\right)^{|nm|-1}e^{inm(\theta-\eta)}\right)d\rho d\eta\\
&= \frac{1}{4\pi}\int_{\mathbb{T}}\int_{0}^{r}h(\rho,\eta+\theta)\left(\sum_{n\ne 0 }\left(\frac{\rho}{r}\right)^{|nm|+1}e^{-inm\eta}\right)d\rho d\eta\\
& \ -  \frac{1}{4\pi}\int_{\mathbb{T}}\int_{r}^{\infty}h(\rho,\eta+\theta)\left(\sum_{n\ne 0 }\left(\frac{r}{\rho}\right)^{|nm|-1}e^{-inm\eta}\right)d\rho d\eta\\
&=\frac{1}{2\pi}\int_{\mathbb{T}}\int_0^r h(\rho,\eta+\theta)\left(\sum_{n=1}^{\infty}\left(\frac{\rho}{r}\right)^{nm+1}\cos(nm\eta)\right)d\rho d\eta\\
&\ - \frac{1}{2\pi}\int_{\mathbb{T}}\int_{r}^{\infty}h(\rho,\eta+\theta)\left(\sum_{n=1}^{\infty}\left(\frac{r}{\rho}\right)^{nm-1}\cos(nm\eta)\right)d\rho d\eta,
\end{align*}
where we used the change of variables, $\eta \mapsto \eta+\theta$ to get the second inequality. This proves \eqref{errorderivative111}. In the same way, we use \eqref{angular112} and the change of variables to obtain
\begin{align*}
\partial_{\theta}\varphi_m(r,\theta) & = -\frac{r}{4\pi}\int_{\mathbb{T}}\int_{0}^{r}h(\rho,\eta)\left(\sum_{n\ne 0 }\frac{in}{|n|}\left(\frac{\rho}{r}\right)^{|nm|+1}e^{inm(\theta-\eta)}\right)d\rho d\eta\\
& \  - \frac{r}{4\pi}\int_{\mathbb{T}}\int_{r}^{\infty}h(\rho,\eta)\left(\sum_{n\ne 0 }\frac{in}{|n|}\left(\frac{r}{\rho}\right)^{|nm|-1}e^{inm(\theta-\eta)}\right)d\rho d\eta\\
&= -\frac{r}{4\pi}\int_{\mathbb{T}}\int_{0}^{r}h(\rho,\eta+\theta)\left(\sum_{n\ne 0 }\frac{in}{|n|}\left(\frac{\rho}{r}\right)^{|nm|+1}e^{-inm\eta}\right)d\rho d\eta\\
& \  - \frac{r}{4\pi}\int_{\mathbb{T}}\int_{r}^{\infty}h(\rho,\eta+\theta)\left(\sum_{n\ne 0 }\frac{in}{|n|}\left(\frac{r}{\rho}\right)^{|nm|-1}e^{-inm\eta}\right)d\rho d\eta\\
&=-\frac{r}{2\pi}\int_{\mathbb{T}}\int_0^r h(\rho,\eta+\theta)\left(\sum_{n=1}^{\infty}\left(\frac{\rho}{r}\right)^{nm+1}\sin(nm\eta)\right)d\rho d\eta\\
&\ - \frac{r}{2\pi}\int_{\mathbb{T}}\int_{r}^{\infty}h(\rho,\eta+\theta)\left(\sum_{n=1}^{\infty}\left(\frac{r}{\rho}\right)^{nm-1}\sin(nm\eta)\right)d\rho d\eta,
\end{align*}
which proves \eqref{errorderivative112}.

\end{proof}

\begin{lemma}\label{derivatives1}
For a patch $D$ that satisfies the assumptions (a)-(c) in subsection~\ref{proofoftheorem3}, it holds that for $r\in (r_{min},r_{max})$ and $\eta := u^{-1}$, 
\begin{align*}
\partial_r \varphi_m(r,\theta) = \int_{r_{min}}^{r_{max}}f_1(\rho,r,\theta)d\rho\\
\partial_\theta \varphi_m(r,\theta) = \int_{r_{min}}^{r_{max}}f_2(\rho,r,\theta)d\rho,
\end{align*}
where
\begin{align}
& f_1(\rho,r,\theta) = 
\begin{cases}
\frac{1}{2\pi} \frac{\rho}{r} \left(\arctan{\left(\frac{(\frac{r}{\rho})^m\sin{(m(\theta-\eta(\rho)))}}{1-\left(\frac{r}{\rho}\right)^m\cos{(m(\theta-\eta(\rho)))}}\right)}- \arctan{\left(\frac{(\frac{r}{\rho})^m\sin{(m(\theta+\eta(\rho)))}}{1-\left(\frac{r}{\rho}\right)^m\cos{(m(\theta+\eta(\rho)))}}\right)}\right) & \text{ if }\rho \ge r \\
 \frac{1}{2\pi} \frac{\rho}{r} \left(\arctan{\left(\frac{(\frac{\rho}{r})^m\sin{(m(\theta+\eta(\rho)))}}{1-\left(\frac{\rho}{r}\right)^m\cos{(m(\theta+\eta(\rho)))}}\right)}- \arctan{\left(\frac{(\frac{\rho}{r})^m\sin{(m(\theta-\eta(\rho)))}}{1-\left(\frac{\rho}{r}\right)^m\cos{(m(\theta-\eta(\rho)))}}\right)}\right) & \text{ if }\rho < r, \label{f1}
\end{cases}\\
& f_2(\rho,r,\theta) = 
\begin{cases}
\frac{\rho}{4\pi}\log\left(1+\frac{4(\frac{r}{\rho})^m\sin(m\eta(\rho))\sin(m\theta)}{1+(\frac{r}{\rho})^{2m}-2(\frac{r}{\rho})^m\cos(m(\theta-\eta(\rho)))}\right) & \text{ if }\rho \ge r \\
\frac{\rho}{4\pi}\log\left(1+\frac{4(\frac{\rho}{r})^m\sin(m\eta(\rho))\sin(m\theta)}{1+(\frac{\rho}{r})^{2m}-2(\frac{\rho}{r})^m\cos(m(\theta-\eta(\rho)))}\right) & \text{ if }\rho < r. \label{f2}
\end{cases}
\end{align} 
\end{lemma}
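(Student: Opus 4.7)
The plan is to apply Lemma~\ref{derivatives335} after computing the angular Fourier series of $h(\rho, \cdot) := 1_D(\rho(\cos\cdot,\sin\cdot)) - g(\rho)$ in closed form, and then to sum the resulting trigonometric series using classical identities.

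First I would exploit assumptions (a)--(c) to describe $h(\rho, \cdot)$ explicitly. For $\rho \in (r_{\min}, r_{\max})$, the evenness, $\frac{2\pi}{m}$-periodicity, and monotonicity of $u$ on $(0, \pi/m)$ make $\{\eta : \rho(\cos\eta,\sin\eta) \in D\}$ the union of the $m$ arcs $[-\eta(\rho) + 2\pi k/m,\, \eta(\rho) + 2\pi k/m]$ for $k=0,\ldots,m-1$, whence $g(\rho) = m\eta(\rho)/\pi$. A direct evaluation via the geometric sum $\sum_{k=0}^{m-1} e^{-2\pi i j k/m}$ shows that $\hat h(\rho, j) = 0$ unless $m\mid j$, and $\hat h(\rho, nm) = \sin(nm\eta(\rho))/(\pi n)$ for every $n \in \mathbb{Z}\setminus\{0\}$. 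Outside $(r_{\min}, r_{\max})$ the function $h$ vanishes identically, so the $\rho$-integrals in \eqref{errorderivative111}--\eqref{errorderivative112} collapse to integrals over $(r_{\min}, r_{\max})$.

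Next I would substitute these Fourier coefficients into Lemma~\ref{derivatives335} and perform the $\eta$-integration using orthogonality, obtaining for each $n\ge 1$
\[
\int_\mathbb{T} h(\rho,\eta+\theta)\cos(nm\eta)\,d\eta = \tfrac{2\sin(nm\eta(\rho))\cos(nm\theta)}{n}, \quad \int_\mathbb{T} h(\rho,\eta+\theta)\sin(nm\eta)\,d\eta = -\tfrac{2\sin(nm\eta(\rho))\sin(nm\theta)}{n}.
\]
This reduces both derivatives to a single $\rho$-integral over $(r_{\min},r_{\max})$ whose integrand, on each of the two regimes $\rho<r$ and $\rho\ge r$, is a multiple of $\sum_{n\ge 1} a^n \sin(nm\eta(\rho))\cos(nm\theta)/n$ or $\sum_{n\ge 1} a^n\sin(nm\eta(\rho))\sin(nm\theta)/n$ with $a\in\{(\rho/r)^m,(r/\rho)^m\}$. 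Using the product-to-sum identities $\sin\phi\cos\psi = \tfrac{1}{2}(\sin(\phi+\psi)+\sin(\phi-\psi))$ and $\sin\phi\sin\psi = \tfrac{1}{2}(\cos(\phi-\psi)-\cos(\phi+\psi))$ together with the classical summations
\[
\sum_{n\ge 1}\frac{a^n\sin(n\alpha)}{n} = \arctan\!\left(\frac{a\sin\alpha}{1-a\cos\alpha}\right), \qquad \sum_{n\ge 1}\frac{a^n\cos(n\alpha)}{n} = -\tfrac{1}{2}\log(1-2a\cos\alpha+a^2),
\]
valid for $|a|<1$, I recover the difference of arctangents in \eqref{f1} and, after the algebraic simplification
\[
\frac{1-2a\cos(\phi+\psi)+a^2}{1-2a\cos(\phi-\psi)+a^2} = 1 + \frac{4a\sin\phi\sin\psi}{1-2a\cos(\phi-\psi)+a^2},
\]
the logarithm in \eqref{f2}.

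I expect no conceptual obstacle; the main nuisance will be careful bookkeeping of signs and of the swap $\rho/r \leftrightarrow r/\rho$ across $\rho=r$ (a measure-zero set, hence harmless for integrability), together with using oddness of $\sin$ to rewrite $\eta(\rho)-\theta$ as $-(\theta-\eta(\rho))$ inside the arctan in order to present $f_1$ in exactly the stated form.
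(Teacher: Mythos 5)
Your proposal is correct and follows essentially the same route as the paper: start from Lemma~\ref{derivatives335}, evaluate the inner $\eta$-integral using the explicit form of $h$ (the paper integrates $\cos(nm(s-\theta))$ directly over the arc $(-\eta(\rho),\eta(\rho))$, you compute $\hat h(\rho,nm)=\sin(nm\eta(\rho))/(\pi n)$ and apply orthogonality, which gives the same $\frac{2}{n}\sin(nm\eta(\rho))\cos(nm\theta)$ and $-\frac{2}{n}\sin(nm\eta(\rho))\sin(nm\theta)$), and then sum the resulting series with the identities of Lemma~\ref{series3} and the product-to-sum and log-ratio simplifications. The only cosmetic difference is that you package the Fourier coefficients first; the key lemmas, the series summation, and the sign bookkeeping across $\rho=r$ are identical.
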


\begin{proof} The proof is based on Lemma~\ref{derivatives335}. Using $m$-fold symmetry and evenness of the patch, we will compute the series. 

 From Lemma~\ref{derivatives335} and Fubini theorem, it follows that
\begin{align}\label{eqn551}
\partial_r \varphi_m(r,\theta) & =\frac{1}{2\pi}\sum_{n=1}^{\infty}\int_0^r \left(\frac{\rho}{r}\right)^{nm+1}\left(\int_{\mathbb{T}}h(\rho,s+\theta)\cos(nms)ds \right)d\rho  \nonumber \\
&\ - \frac{1}{2\pi}\sum_{n=1}^{\infty}\int_r^{\infty} \left(\frac{r}{\rho}\right)^{nm-1}\left(\int_{\mathbb{T}}h(\rho,s+\theta)\cos(nms)ds \right)d\rho,
\end{align}
and
\begin{align}\label{eqn552}
\partial_\theta \varphi_m(r,\theta) & =-\frac{r}{2\pi}\sum_{n=1}^{\infty}\int_0^r \left(\frac{\rho}{r}\right)^{nm+1}\left(\int_{\mathbb{T}}h(\rho,s+\theta)\sin(nms)ds \right)d\rho  \nonumber \\
&\ - \frac{r}{2\pi}\sum_{n=1}^{\infty}\int_r^{\infty} \left(\frac{r}{\rho}\right)^{nm-1}\left(\int_{\mathbb{T}}h(\rho,s+\theta)\sin(nms)ds \right)d\rho,
\end{align}
where $h(\rho,s) := 1_D(\rho\cos{s},\rho\sin{s})-g(\rho)$. Using the definition of $\eta = u^{-1}$, the following holds for $s\in [ -\frac{\pi}{m},\frac{\pi}{m} ]$:
\begin{align*}
h(\rho,s) = 
\begin{cases}
1-g(\rho) & \text{ if }s\in (-\eta(\rho),\eta(\rho)), \\
-g(\rho) & \text{ if }s \in [-\frac{\pi}{m},\frac{\pi}{m}]\backslash (-\eta(\rho),\eta(\rho)).
\end{cases}
\end{align*}
Therefore $m$-fold symmetry of $D$ yields that
\begin{align}\label{costerm1}
\int_{\mathbb{T}}h(\rho,s+\theta)\cos(nms)ds & = m\int_{-\frac{\pi}{m}}^{\frac{\pi}{m}}h(\rho,s)\cos(nm(s-\theta))ds\nonumber\\
& = m\int_{-\eta(\rho)}^{\eta(\rho)}\cos(nm(s-\theta))ds\nonumber\\
&=\frac{1}{n}\left(\sin(nm(\eta(\rho)-\theta))+\sin(nm(\eta(\rho)+\theta))\right).
\end{align}
Similarly, we have
\begin{align}\label{sinterm1}
\int_{\mathbb{T}}h(\rho,s+\theta)\sin(nms)ds = -\frac{1}{n}\left(\cos(nm(\eta(\rho)-\theta))-\cos(nm(\eta(\rho)+\theta))\right)
\end{align}
 
 Hence \eqref{eqn551} and \eqref{costerm1} yield that
 \begin{align*}
 \partial_r & \varphi_m(r,\theta) \\
 & = \frac{1}{2\pi}\int_{0}^{r}\sum_{n=1}^{\infty}\left(\frac{1}{n}\left(\frac{\rho}{r}\right)^{nm+1}\left(\sin(nm(\eta(\rho)-\theta))+\sin(nm(\eta(\rho)+\theta))\right)\right)d\rho\\
 & \ - \frac{1}{2\pi}\int_{r}^{\infty}\sum_{n=1}^{\infty}\left(\frac{1}{n}\left(\frac{r}{\rho}\right)^{nm-1}\left(\sin(nm(\eta(\rho)-\theta))+\sin(nm(\eta(\rho)+\theta))\right)\right)d\rho\\
 & =\frac{1}{2\pi}\int_{0}^{r}\frac{\rho}{r} \left(\arctan\left( \frac{\left(\frac{\rho}{r}\right)^m\sin(m(\eta(\rho)-\theta))}{1-\left(\frac{\rho}{r}\right)^m\cos(m(\eta(\rho)-\theta))}\right) + \arctan\left( \frac{\left(\frac{\rho}{r}\right)^m\sin(m(\eta(\rho)+\theta))}{1-\left(\frac{\rho}{r}\right)^m\cos(m(\eta(\rho)+\theta))}\right) \right)d\rho\\
 &\ -\frac{1}{2\pi}\int_{r}^{\infty}\frac{\rho}{r} \left(\arctan\left( \frac{\left(\frac{r}{\rho}\right)^m\sin(m(\eta(\rho)-\theta))}{1-\left(\frac{r}{\rho}\right)^m\cos(m(\eta(\rho)-\theta))}\right) + \arctan\left( \frac{\left(\frac{r}{\rho}\right)^m\sin(m(\eta(\rho)+\theta))}{1-\left(\frac{r}{\rho}\right)^m\cos(m(\eta(\rho)+\theta))}\right) \right)d\rho,
 \end{align*}
where the last equality follows from \eqref{formula1} in Lemma~\ref{series3}. Since the integrands in the above integrals are zero if $\rho<r_{min}$ or $\rho>r_{max}$, we can replace $0$ and $\infty$ in integration limits  by $r_{min}$ and $r_{max}$, respectively.   This proves \eqref{f1}. To prove \eqref{f2}, we use \eqref{eqn552},\eqref{sinterm1} and \eqref{formula2} to obtain
\begin{align*}
\partial_\theta &\varphi_m(r,\theta) \\
& = \frac{r}{2\pi}\int_{0}^{r}\sum_{n=1}^{\infty}\left(\frac{1}{n}\left(\frac{\rho}{r}\right)^{nm+1}\left(\cos(nm(\eta(\rho)-\theta))-\cos(nm(\eta(\rho)+\theta))\right)\right)d\rho\\
 & \ + \frac{r}{2\pi}\int_{r}^{\infty}\sum_{n=1}^{\infty}\left(\frac{1}{n}\left(\frac{r}{\rho}\right)^{nm-1}\left(\cos(nm(\eta(\rho)-\theta)) - \cos(nm(\eta(\rho)+\theta))\right)\right)d\rho\\
  &= \frac{r}{4\pi}\int_{0}^{r}\frac{\rho}{r}  \log\left( \frac{1+\left(\frac{\rho}{r}\right)^{2m}-2\left(\frac{\rho}{r}\right)^m\cos(m(\eta(\rho)+\theta))}{1+\left(\frac{\rho}{r}\right)^{2m}-2\left(\frac{\rho}{r}\right)^m\cos(m(\eta(\rho)-\theta))}\right)d\rho\\
 & \ + \frac{r}{4\pi}\int_r^\infty \frac{\rho}{r} \log\left( \frac{1+\left(\frac{r}{\rho}\right)^{2m}-2\left(\frac{r}{\rho}\right)^m\cos(m(\eta(\rho)+\theta))}{1+\left(\frac{r}{\rho}\right)^{2m}-2\left(\frac{r}{\rho}\right)^m\cos(m(\eta(\rho)-\theta))}\right)d\rho\\
 &= \frac{1}{4\pi}\int_{0}^{r}\rho  \log\left(1+ \frac{2\left(\frac{\rho}{r}\right)^m(\cos(m(\eta(\rho)-\theta))-\cos(m(\eta(\rho)+\theta)))}{1+\left(\frac{\rho}{r}\right)^{2m}-2\left(\frac{\rho}{r}\right)^m\cos(m(\eta(\rho)-\theta))}\right) d\rho\\
 &\ + \frac{1}{4\pi}\int_r^\infty \rho \log\left( 1+\frac{2\left(\frac{r}{\rho}\right)^m(\cos(m(\eta(\rho)-\theta))-\cos(m(\eta(\rho)+\theta)))}{1+\left(\frac{r}{\rho}\right)^{2m}-2\left(\frac{r}{\rho}\right)^m\cos(m(\eta(\rho)-\theta))}\right)d\rho\\
 &= \frac{1}{4\pi} \int_0^r \rho\log\left(1+\frac{4(\frac{\rho}{r})^m\sin(m\eta(\rho))\sin(m\theta)}{1+(\frac{\rho}{r})^{2m}-2(\frac{\rho}{r})^m\cos(m(\theta-\eta(\rho)))}\right) d\rho \\
 &\ + \frac{1}{4\pi}\int_r^\infty \rho \log\left(1+\frac{4(\frac{r}{\rho})^m\sin(m\eta(\rho))\sin(m\theta)}{1+(\frac{r}{\rho})^{2m}-2(\frac{r}{\rho})^m\cos(m(\theta-\eta(\rho)))}\right) d\rho,
\end{align*}
where the last equality follows from $\cos(x-y)-\cos(x+y)=2\sin x \sin y$. This proves \eqref{f2}.
\end{proof}

\subsection{Helpful lemmas}

\begin{lemma}\label{series3}
For $|x| < 1$ and $y\in (-\pi,\pi)$, it holds that
\begin{align}
\sum_{n=1}^{\infty}\frac{1}{n}x^{n} \cos(ny) = -\frac{1}{2}\log(1+x^2-2x\cos y), \label{formula2} \\
\sum_{n=1}^{\infty}\frac{1}{n}x^n \sin(ny) = \arctan{\left( \frac{x\sin y}{1-x\cos y}\right)}. \label{formula1}
\end{align}
Consequently, we have
\begin{align}
\sum_{n=1}^{\infty}x^n\cos(ny) = \frac{x(\cos{y}-x)}{(1-x)^2+2x(1-\cos{y})},\label{formula4} \\
\sum_{n=1}^{\infty}x^n\sin(ny) = \frac{x\sin{y}}{(1-x)^2+2x(1-\cos{y})}.\label{formula3}
\end{align}
\end{lemma}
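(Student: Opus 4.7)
The plan is to derive all four identities simultaneously from the two elementary complex-analytic series
\[
\sum_{n=1}^{\infty}\frac{z^n}{n}=-\log(1-z),\qquad \sum_{n=1}^{\infty}z^n=\frac{z}{1-z},\qquad |z|<1,
\]
specialised to $z=xe^{iy}$, which satisfies $|z|=|x|<1$ by hypothesis.

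First I would prove \eqref{formula2} and \eqref{formula1} together. Writing $z=xe^{iy}$, we have $1-z=(1-x\cos y)-ix\sin y$, so
\[
|1-z|^{2}=1-2x\cos y+x^{2},\qquad \arg(1-z)=-\arctan\!\left(\frac{x\sin y}{1-x\cos y}\right),
\]
where the principal branch of $\arctan$ is the right one because $y\in(-\pi,\pi)$ and $|x|<1$ guarantee $\mathrm{Re}(1-z)>0$ (so $1-z$ stays in the right half-plane, away from the branch cut of $\log$). Taking real and imaginary parts of $\sum_{n\ge1}z^n/n=-\log(1-z)=-\tfrac12\log|1-z|^2 - i\arg(1-z)$ yields \eqref{formula2} and \eqref{formula1} at once.

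Next I would prove \eqref{formula4} and \eqref{formula3} by the same device applied to the geometric series. Rationalising,
\[
\frac{z}{1-z}=\frac{z(1-\bar z)}{|1-z|^{2}}=\frac{(x\cos y-x^{2})+i\,x\sin y}{1-2x\cos y+x^{2}},
\]
and rewriting the denominator as $(1-x)^{2}+2x(1-\cos y)$ gives, upon separating real and imaginary parts, exactly \eqref{formula4} and \eqref{formula3}. Since every manipulation here is purely algebraic once the geometric series is summed, there is no genuine obstacle; the only point demanding a little care is choosing the right branch of $\log$ in the first step, which is handled by the observation $\mathrm{Re}(1-z)>0$ under our hypotheses.
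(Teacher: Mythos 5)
Your proof is correct. It shares the central idea with the paper's proof --- work with $z=xe^{iy}$ and separate real and imaginary parts --- but the execution differs in two small respects worth noting. The paper avoids any direct discussion of the branch of the complex logarithm: it defines $f(x,y)=\sum_{n\ge1}x^ne^{iny}/n$, differentiates in $x$ to produce a geometric series, sums it, rationalises, matches the result with the $x$-derivative of the proposed right-hand side, and integrates back using $f(0,y)=0$; it then obtains \eqref{formula4}--\eqref{formula3} by differentiating \eqref{formula2}--\eqref{formula1} in $x$ and multiplying by $x$, rather than summing the geometric series from scratch. You instead invoke $\sum_{n\ge 1}z^n/n=-\log(1-z)$ outright, which forces you to check that the principal branch is the right one; you do this correctly by observing that $\operatorname{Re}(1-z)=1-x\cos y>0$ whenever $|x|<1$, so $1-z$ never crosses the branch cut and $\arg(1-z)=-\arctan\bigl(x\sin y/(1-x\cos y)\bigr)$. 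Both routes are sound. Yours is the more standard textbook derivation and a bit shorter; the paper's derivative-then-integrate device is a tidy way to sidestep the branch argument entirely and makes the derivation of the second pair of identities an immediate corollary of the first pair.
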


\begin{proof}
Let $f(x,y):= \sum_{n=1}^{\infty}\frac{1}{n}x^ne^{iny}$. Then we compute
\begin{align*}
\partial_x f(x,y) & = \frac{1}{x}\sum_{n=1}^{\infty}\left(xe^{iy}\right)^{n}=\frac{e^{iy}}{1-xe^{iy}} = \frac{(\cos y - x) + i  \sin y}{(1-x\cos y)^2 + x^2 \sin^2 y}\\
& = \partial_x\left(-\frac{1}{2}\log(1+x^2-2x\cos y) + i \arctan\left(\frac{x\sin y}{1-x\cos y}\right)\right).
\end{align*}
Since $f(0,y) = 0$, we have $f(x,y) = -\frac{1}{2}\log(1+x^2-2x\cos y) + i \arctan\left(\frac{x\sin y}{1-x\cos y}\right)$. Equating the real and imaginary parts separately, we can obtain \eqref{formula2} and \eqref{formula1}. By differentiating \eqref{formula2} and \eqref{formula1} and multiplying by $x$, one can easily obtain \eqref{formula4} and \eqref{formula3}.
\end{proof}

\begin{lemma}\label{inequality1} 
For $m\ge3$ and $a,b\in (0,1)$, it holds that
\begin{align*}
\int_{0}^{1} x^{-1-\frac{2}{m}} \left(\arctan\left( \frac{ax}{1-x}\right)-\arctan\left(\frac{ax}{1-bx}\right)\right)dx \lesssim 1-b
\end{align*}
\end{lemma}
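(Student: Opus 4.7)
The plan is to turn the difference of arctangents into an integral in an auxiliary parameter $s \in [b,1]$ so that the factor $1-b$ appears naturally from the length of the integration interval. Writing $F(s) := \arctan(ax/(1-sx))$, differentiation gives
\begin{equation*}
F'(s) = \frac{ax^2}{(1-sx)^2 + a^2x^2},
\end{equation*}
and $F(1)-F(b)$ is exactly the integrand of the given expression (up to the weight $x^{-1-2/m}$). Thus by Fubini
\begin{equation*}
\int_0^1 x^{-1-\frac{2}{m}}\bigl(F(1)-F(b)\bigr)\,dx \;=\; \int_b^1 \left(\int_0^1 \frac{ax^{1-\frac{2}{m}}}{(1-sx)^2 + a^2x^2}\,dx\right)\,ds.
\end{equation*}

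Next I would show that the inner integral is bounded by a universal constant, independently of $a$, $s$ and $m\ge 3$. Since $1-\tfrac{2}{m}\ge \tfrac13>0$, the weight satisfies $x^{1-2/m}\le 1$ on $(0,1)$, so it suffices to control $\int_0^1 \tfrac{a\,dx}{(1-sx)^2+a^2x^2}$. Here a direct computation pays off: completing the square in the denominator, one verifies
\begin{equation*}
\frac{d}{dx}\arctan\!\left(\frac{(a^2+s^2)x-s}{a}\right) \;=\; \frac{a}{(1-sx)^2 + a^2x^2},
\end{equation*}
so the integral evaluates explicitly to $\arctan\!\bigl(\tfrac{a^2+s^2-s}{a}\bigr)+\arctan\!\bigl(\tfrac{s}{a}\bigr)$, which is at most $\pi$. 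Consequently the outer integral is bounded by $\pi(1-b)$, which gives the claim.

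The only conceptually nontrivial move is the parameter trick in the first step; once one realises that differentiating in $s$ linearises the problem in $1-b$, the rest is a short explicit integration and an elementary bound using $m\ge 3$ to kill the singular weight $x^{-1-2/m}$. No obstacle of substance is expected — in particular, monotonicity of $s\mapsto 1-sx$ on $[b,1]\subset(0,1]$ for $x\in(0,1)$ keeps everything well-defined and positive, so Fubini applies without qualification.
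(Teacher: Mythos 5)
Your proof is correct and takes a genuinely different route from the paper's. You linearise the difference of arctangents in the parameter by writing $\arctan(\tfrac{ax}{1-x}) - \arctan(\tfrac{ax}{1-bx}) = \int_b^1 \partial_s \arctan(\tfrac{ax}{1-sx})\,ds$, apply Tonelli (everything is nonnegative, so no issue), and then observe that the inner $x$-integral has an explicit antiderivative $\arctan\bigl(\tfrac{(a^2+s^2)x-s}{a}\bigr)$ bounding it by $\pi$ uniformly in $a,s,m$; the factor $1-b$ then comes for free from the length of the $s$-interval. The paper instead substitutes $bx\mapsto x$ in the second integral, uses $b^{2/m}\ge b$ (this is where $m\ge 3$ enters for them) and monotonicity of $\arctan$ to get a lower bound, and then splits the resulting difference as $\int_b^1 g\,dx + (1-b)\int_0^b g\,dx$ with $g(x)=x^{-1-2/m}\arctan(\tfrac{ax}{1-x})$, each piece being $\lesssim 1-b$ because $g$ is integrable on $(0,1)$ uniformly in $a$ and bounded away from $x=0$. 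Your argument is arguably cleaner — it gives an explicit constant $\pi$ and uses $m\ge 3$ only to make the weight integrable via $x^{1-2/m}\le 1$, whereas the paper's proof needs $m\ge 3$ twice (for $b^{2/m}\ge b$ and for integrability) — but both are short and elementary.
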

\begin{proof}
 By the change of variables, $bx\mapsto x$, we have
\begin{align*}
\int_0^1 x^{-1-\frac{2}{m}} \arctan\left( \frac{a x}{1-bx}\right)dx & = \int_0^{b} b^{\frac{2}{m}}x^{-1-\frac{2}{m}}\arctan\left(\frac{\frac{a x}{b}}{1-x}\right)dx\\
& \ge \int_0^{b}bx^{-1-\frac{2}{m}}\arctan\left(\frac{a x}{1-x}\right)dx,
\end{align*}
where we used $b^{\frac{2}{m}}\ge b$ for $0<b<1$ and $m\ge 3$.
Therefore it follows that
\begin{align*}
\int_{0}^{1} x^{-1-\frac{2}{m}} \left(\arctan\left( \frac{ax}{1-x}\right)-\arctan\left(\frac{ax}{1-bx}\right)\right)dx & \le \int_{b}^{1} x^{-1-\frac{2}{m}}\arctan\left(\frac{a x}{1-x}\right)dx\\ 
& \ +(1-b)\int_0^{b}x^{-1-\frac{2}{m}} \arctan\left(\frac{a x}{1-x}\right)dx\\
& \lesssim 1-b,
\end{align*}
which proves the desired inequality.
\end{proof}

\begin{lemma}\label{inequality2}
For $m\ge 3$ and $a\in(0,1)$, it holds that
\begin{align}
\int_0^{1}x^{-1-\frac{2}{m}}\log\left( 1+\frac{ax}{(1-x)^2}\right)dx \lesssim \sqrt{a}.\end{align}
\end{lemma}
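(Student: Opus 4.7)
The plan is to exploit the fact that the logarithm is small where $(1-x)^2 \ge a$ and the singular weight $x^{-1-2/m}$ is under control where $(1-x)^2 < a$. Concretely, I would split the integration at $x_0 := 1-\sqrt{a}$.

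First reduce to the small-$a$ regime: if $a \in [1/2, 1)$, then $\sqrt{a} \ge 1/\sqrt{2}$ and the integral is bounded by the value at $a=1$, namely $\int_0^1 x^{-1-2/m}\log\!\bigl(1+\tfrac{x}{(1-x)^2}\bigr)dx$, which is finite and uniform in $m\ge 3$ (the integrand behaves like $x^{-2/m}$ near $0$ and like $-2\log(1-x)$ near $1$). So from here on assume $a\le 1/2$, whence $x_0 \ge 1-\tfrac{1}{\sqrt{2}}$.

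On the region $x\in[0,x_0]$, I would use $(1-x)^2\ge a$, which implies $\frac{ax}{(1-x)^2}\le 1$, together with the elementary bound $\log(1+y)\le y$ for $y\ge 0$. This yields
\begin{align*}
\int_0^{x_0} x^{-1-2/m}\log\!\left(1+\frac{ax}{(1-x)^2}\right)dx \le a\int_0^{x_0}\frac{x^{-2/m}}{(1-x)^2}dx.
\end{align*}
Then split at $x=1/2$: on $[0,1/2]$ the factor $(1-x)^{-2}$ is bounded by $4$ and $\int_0^{1/2}x^{-2/m}dx$ is bounded uniformly in $m\ge 3$, giving an $O(a)$ contribution; on $[1/2,x_0]$ the factor $x^{-2/m}$ is bounded by $2^{2/3}$ and $\int_{1/2}^{1-\sqrt{a}}(1-x)^{-2}dx \le a^{-1/2}$, so the contribution is $O(\sqrt{a})$.

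On the region $x\in[x_0,1]$, the weight $x^{-1-2/m}$ is bounded by a constant (since $x\ge 1-1/\sqrt{2}$), so the relevant estimate is
\begin{align*}
\int_{x_0}^{1}\log\!\left(1+\frac{ax}{(1-x)^2}\right)dx \le \int_0^{\sqrt{a}}\log\!\left(1+\frac{a}{u^2}\right)du
\end{align*}
after the change of variables $u=1-x$ and using $1-u\le 1$. A final rescaling $u=\sqrt{a}\,v$ converts this into $\sqrt{a}\int_0^1\log(1+v^{-2})dv$, and the remaining integral is a finite constant since $\log(1+v^{-2})\sim -2\log v$ near $v=0$. This produces the desired $O(\sqrt{a})$ bound. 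No step is particularly delicate; the only thing to monitor is that all constants are uniform in $m\ge 3$, which holds because $2/m \le 2/3$ everywhere we need integrability of $x^{-2/m}$ at the origin.
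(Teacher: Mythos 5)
Your proof is correct, but the decomposition differs from the paper's. The paper splits first at $x=1/2$: on $[0,1/2]$ it uses $\log(1+y)\lesssim y$ to get an $O(a)$ contribution, and on $[1/2,1]$ it replaces $x$ by $1$ in the numerator and then integrates by parts, writing
\begin{align*}
\int_{1/2}^{1}\log\Bigl(1+\tfrac{a}{(1-x)^2}\Bigr)dx
 = \tfrac{1}{2}\log(1+4a)+\int_{1/2}^{1}\frac{2a}{(1-x)^2+a}\,dx,
\end{align*}
and only then splits the remaining integral at $1-\sqrt{a}$. You instead split directly at $x_0=1-\sqrt{a}$, observe that $(1-x)^2\ge a$ on the left piece so that $\log(1+y)\le y$ applies cleanly, and handle the right piece by the change of variables $u=1-x$ followed by the rescaling $u=\sqrt{a}\,v$, which reduces it to a universal constant times $\sqrt{a}$. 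This avoids the integration by parts entirely and is arguably more transparent about where the $\sqrt{a}$ comes from. One small bookkeeping point: with the reduction to $a\le 1/2$ you only have $x_0\ge 1-1/\sqrt{2}\approx 0.29$, so when $a\in(1/4,1/2]$ the subinterval $[1/2,x_0]$ is empty; the estimate still holds trivially there, but you could sidestep the case distinction by reducing to $a\le 1/4$ in the first step instead.
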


\begin{proof}
If $x<\frac{1}{2}$, then $\log(1+\frac{ax}{(1-x)^2}) \lesssim ax$. Therefore,
\begin{align}\label{equation112312}
\int_0^{1}x^{-1-\frac{2}{m}}\log\left( 1+\frac{ax}{(1-x)^2}\right)dx & \lesssim \int_0^\frac{1}{2}ax^{-\frac{2}{m}}dx +\int_{\frac{1}{2}}^{1}\log\left(1+\frac{ax}{(1-x)^2}\right)dx \nonumber\\
& \lesssim a + \int_{\frac{1}{2}}^{1}\log\left(1+\frac{a}{(1-x)^2}\right)dx,
\end{align}
where we used $m\ge 3$ to estimate the first integral and $x\in (\frac{1}{2},1)$ for the second integral. To estimate the second integral, we compute
\begin{align}\label{equation112313}
\int_{\frac{1}{2}}^{1}\log \left( 1+\frac{a}{(1-x)^2}\right)dx &= \int_{\frac{1}{2}}^1 \frac{d}{dx}(x-1)\log\left(1+\frac{a}{(1-x)^2}\right)dx \nonumber\\
& =\frac{1}{2}\log(1+4a) + \int_{\frac{1}{2}}^1 \frac{2a}{(1-x)^2+a}dx\nonumber\\
& \lesssim a + \int_{\frac{1}{2}}^{1-\sqrt{a}} \frac{a}{(1-x)^2}dx + \int_{1-\sqrt{a}}^{1}1dx\nonumber\\
&\lesssim \sqrt{a}.
\end{align}
Thus the desired result follows from \eqref{equation112312} and \eqref{equation112313}.
\end{proof}

\section*{Acknowledgments.}
The author would like to thank his academic advisor, Prof. Yao Yao, for suggesting the problem and reading the draft paper. The author was partially supported by the NSF grants DMS-1715418 and DMS-1846745.

  \bibliographystyle{abbrv}
  
\bibliography{references}
\begin{tabular}{ll}
\textbf{Jaemin Park} \\
{\small School of Mathematics, Georgia Tech} \\
{\small 686 Cherry Street, Atlanta, GA 30332} \\
{\small Email: jpark776@gatech.edu} 
\end{tabular}

\end{document}